\pgfplotsset{/pgf/number format/use comma,compat=newest}
\newcommand\restr[2]{{
  \left.\kern-\nulldelimiterspace 
  #1 
  \vphantom{\big|} 
  \right|_{#2} 
  }}
\lbrace\begin{array}{@{}l@{}}}%
\renewcommand{\P}{\mathds{P}}
\renewcommand{\O}{\mathcal{O}}
\newcommand{\F}{\mathcal{F}}
\newcommand{\G}{\mathcal{G}}
\newcommand{\T}{\mathcal{T}}
\renewcommand{\L}{\mathcal{L}}
\DeclarePairedDelimiter{\intinf}{\lfloor}{\rfloor}
\DeclareMathOperator{\jac}{Jac}
\DeclareMathOperator{\Alb}{Alb}
\DeclareMathOperator{\alb}{alb}
\DeclareMathOperator{\Hom}{Hom}
\DeclareMathOperator{\Pic}{Pic}
\DeclareMathOperator{\im}{Im}
\DeclareMathOperator{\spec}{Spec}
\DeclareMathOperator{\specu}{\underline{Spec}}
\DeclareMathOperator{\rk}{rk}
\DeclareMathOperator{\cha}{char}
\numberwithin{equation}{section}
\theoremstyle{remark}
\newtheorem{remark}{Remark}[section]
\newtheorem{example}[remark]{Example}
\theoremstyle{definition}
\newtheorem{definition}[remark]{Definition}
\theoremstyle{plain}
\newtheorem{theorem}[remark]{Theorem}
\newtheorem{lemma}[remark]{Lemma}
\newtheorem{corollary}[remark]{Corollary}
\newtheorem{proposition}[remark]{Proposition}
\title{Surfaces close to the Severi lines in positive characteristic}
\author{Federico Cesare Giorgio Conti}
\begin{document}

\maketitle

\begin{abstract}
 Let $X$ be a surface of general type with maximal Albanese dimension over an algebraically closed field of characteristic greater than two: we prove that if $K_X^2<\frac{9}{2}\chi(\O_X)$, one has $K_X^2\geq 4\chi(\O_X)+4(q-2)$. Moreover we give a complete classification of surfaces for which equality holds for $q(X)\geq 3$: these are surfaces whose canonical model is a double cover of a product elliptic surface branched over an ample divisor with at most negligible singularities which intersects the elliptic fibre twice.
In addition we expose a similar partial result over algebraically closed fields of characteristic two.

We also prove, in the same hypothesis, that a surface $X$ with $K_X^2\neq 4\chi(\O_X)+4(q-2)$ satisfies $K_X^2\geq 4\chi(\O_X)+8(q-2)$ and we give a characterization of surfaces for which the equality holds. These are surfaces whose canonical model  is a double cover of an isotrivial smooth elliptic surface branched over an ample divisor with at most negligible singularities whose intersection with the elliptic fibre is $4$.
\end{abstract}

\tableofcontents

\section{Introduction}
\label{sec_intro}

Let $X$ be a minimal surface of general type with maximal Albanese dimension (recall that a surface has maximal Albanese dimension if its Albanese morphism is generically finite). We denote by $K_X$ the canonical divisor, by $\chi(\O_X)$ the Euler characteristic of the structure sheaf, by $q$ the dimension of its Albanese variety and by $q'=h^1(\O_X)$ the irregularity (recall that over fields of positive characteristic we have $q'\geq q$, while over the complex numbers $q=q'$ holds). 

In this paper we are interested in characterizing  surfaces which lie on or close to the Severi lines, i.e. surfaces for which the quantity
\begin{equation}
\label{int}
K_X^2-4\chi(\O_X)-4(q-2)
\end{equation}
vanishes or is "small" provided that $K_X^2<\frac{9}{2}\chi(\O_X)$ over algebraically closed fields of positive characteristic. This value is strictly related to the so called Severi inequality, which states that a surface of general type with maximal Albanese dimension satisfies
\begin{equation}
K_X^2\geq 4\chi (\O_X)
\label{sev}
\end{equation} 
and was proved over the complex numbers by Pardini in \cite{par_sev} and  over fields of any characteristic by Yuan and Zhang in \cite{yuan} for fields of positive characteristic.
Barja, Pardini and Stoppino have given a characterization of surfaces over the complex numbers for which the inequality \ref{sev} is indeed an equality in \cite{barparsto}, namely these are surfaces whose canonical model is a double cover of its Albanese variety branched over an ample divisor with at most negligible singularities (in particular $q=2$). An analogous characterization is obtained in \cite{gusunzhou} by Gu, Sun and Zhou over fields of any characteristic. There are many generalizations of the Severi inequality; in particular Lu and Zuo have proved in \cite{lu} a similar inequality involving also the irregularity $q=q'$ over the complex numbers:  a surface of general type and maximal Albanese dimension satisfies
\begin{equation}
K_X^2\geq\min\Bigl\{\frac{9}{2}\chi(\O_X),4\chi(\O_X)+4(q-2)\Bigr\}
\label{lusev}
\end{equation}
or, equivalently, if $K_X^2<\frac{9}{2}\chi(\O_X)$ then $K_X^2\geq 4\chi(\O_X)+4(q-2)$.
They also give conditions for a surface to satisfy the equality
\begin{equation}
\frac{9}{2}\chi(\O_X)>K_X^2=4\chi(\O_X)+4(q-2).
\label{uglu}
\end{equation}
The condition $K_X^2<\frac{9}{2}\chi(\O_X)$ is necessary to prove that there exists an involution $i$ for which the Albanese morphism of $X$ is composed with $i$ (cf. \cite{lu} Theorem 3.1) which is central in their argument. 
There is a single step, \cite{lu} Lemma 4.4(2), where the condition $K_X^2<\frac{9}{2}\chi(\O_X)$ is really needed in their proof and it is not enough to require that  the Albanese morphism of $X$ is composed with an involution.

Finally the author have given a complete classification of surfaces lying close to the Severi lines over the complex numbers in \cite{conti}. Namely, it is showed that a surface of general type with maximal Albanese dimension satisfies $K_X^2=4\chi(\O_X)+4(q-2)$ in case $q\geq 3$ and $K_X^2<\frac{9}{2}\chi(\O_X)$ if and only if its canonical model $X$ is a double cover of a product elliptic surface $C\times E$  over a divisor $R$ with at most negligible singularities. Moreover it is proved that, under the same hypotheses, surfaces for which $K_X^2\neq4\chi(\O_X)+4(q-2)$ satisfy $K_X^2\geq 4\chi(\O_X)+ 8(q-2)$. It is also given a characterization of surfaces for which equality holds when $q\geq 3$: these are surfaces whose canonical model $X$ is isomorphic to a double cover of a smooth isotrivial elliptic surface $Y$ over a curve $C$ of genus $q-1$, branched over a divisor $R$ with at worst negligible singularities for which $K_Y.R=8(q-2)$.

In the present article we extend the results of \cite{lu} and \cite{conti} over fields of positive characteristic. 
As we have already mentioned, the starting point for the proof of Severi type inequalities in \cite{lu} over the complex numbers is the following Theorem which we extend over fields of any characteristic.

\begin{theorem}
Let $X$ be a surface of general type with maximal Albanese dimension and suppose that $K_X^2<\frac{9}{2}\chi(\O_X)$. 
Then there exists a morphism of degree two $f\colon X\to Y$ to a normal surface $Y$ such that the Albanese morphism of $X$ factors through $f$, i.e. the following diagram commutes:
\begin{equation}
\begin{tikzcd}
X\arrow{rr}{f}\arrow{dr}{\alb_X} & & Y\arrow{ld}{}\\
& \Alb(X). & 
\end{tikzcd}
\end{equation}
\label{teo_mio1}
\end{theorem}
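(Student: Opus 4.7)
The plan is to adapt the argument of \cite{lu}, Theorem~3.1, to positive characteristic by replacing its complex-analytic ingredients with the algebraic tools developed in \cite{yuan} and \cite{gusunzhou}. Consider the Stein factorization of the Albanese morphism
\[
\alb_X\colon X \xrightarrow{\,f\,} Y \xrightarrow{\,g\,} \Alb(X),
\]
where $Y$ is normal, $f$ has connected fibres, and $g$ is finite. Since $X$ has maximal Albanese dimension the integer $d:=\deg f$ equals the degree of $\alb_X$ onto its image and is finite; the content of the theorem is exactly the claim $d=2$, after which the first step of this factorization will be the desired morphism.

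To exclude the case $d=1$, in which the Albanese morphism is generically injective, the strategy is to sharpen the Severi inequality in that case. Following the Pardini-Yuan-Zhang limiting method, I would pull back $X$ under the multiplication maps $[n]\colon\Alb(X)\to\Alb(X)$ for integers $n$ coprime to $p=\cha(k)$, obtaining étale Galois covers $X_n\to X$ of degree $n^{2q}$ on which $K^2$ and $\chi(\O)$ scale by $n^{2q}$. On these covers the Severi inequality of \cite{yuan} applies directly, and the refinements developed in \cite{gusunzhou} for the case of a birational Albanese morphism should upgrade the asymptotic lower bound on $K_X^2/\chi(\O_X)$ strictly past $9/2$ as $n\to\infty$ along integers coprime to $p$, contradicting the hypothesis.

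To exclude $d\geq 3$ I would repeat the limiting argument, this time incorporating the finite map $g$: the ramification and degree of $Y\to\Alb(X)$ contribute an additional positive term to the asymptotic Severi-type bound, again forcing $K_X^2\geq\frac{9}{2}\chi(\O_X)$ unless $d\leq 2$. Combined with the previous step this pins down $d=2$, and the map $f$ of the Stein factorization is then the required double cover of a normal surface through which the Albanese morphism factors.

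The main obstacle is that the proof of \cite{lu}, Theorem~3.1, relies on ingredients -- Kodaira-type vanishing and the Hodge-theoretic semistability of direct images $f_*\omega_{X/C}$ for a fibration onto a curve $C$ -- which either fail outright or require delicate control of $p$ in positive characteristic. These must be replaced by the characteristic-free slope and semistability estimates of \cite{gusunzhou}, and one must work throughout along the tower of étale covers $\{X_n\}_{\gcd(n,p)=1}$, since the multiplication-by-$n$ construction is only available for $n$ prime to $p$. Moreover, since the final object $f$ is a degree-two cover, which behaves pathologically in characteristic~$2$ (wild ramification and inseparable phenomena), a careful restriction to $\cha(k)>2$ is the natural framework, consistent with the standing assumption of the paper.
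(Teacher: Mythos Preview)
There is a genuine gap: you have misread what the theorem asserts. The claim is \emph{not} that the Albanese morphism has degree~$2$ onto its image, but only that it admits a degree-$2$ \emph{factor}. These are different, and in fact the degree of $\alb_X$ can be an arbitrary power of~$2$ while still satisfying $K_X^2<\frac{9}{2}\chi(\O_X)$; the paper's own proof of Theorem~\ref{teo_mio3} explicitly inducts on the power of~$2$ dividing $\deg(\alb_X)$. Your proposed exclusion of $d\geq 3$ would therefore have to fail, and indeed it does: the ``additional positive term'' you invoke is not large enough to push the Severi-type bound past $\frac{9}{2}$ when the first factor has degree~$2$. (There is also a smaller confusion: in the Stein factorization you wrote, the map with connected fibres is birational, since $\alb_X$ is generically finite; it is the finite part whose degree equals the degree of $\alb_X$ onto its image.)

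The paper's actual route is quite different. One constructs, via Pardini's multiplication-by-$d$ trick, a sequence of fibrations $\phi_d\colon\widetilde{X}_d\to\P^1$ whose slopes limit to $K_X^2/\chi(\O_X)$, and then applies a refined slope inequality for non-hyperelliptic fibrations (Theorem~\ref{teo_slopegusunzhou}, an extension of \cite{gusunzhou} Theorem~3.1 allowing the constant $c$ up to $\frac{1}{2}$) to force each $\phi_d$ to factor through a degree-$2$ map to another fibred surface. The key step is then to show this degree-$2$ factor \emph{descends} from $X_d$ to $X$ (Proposition~\ref{propo_gusunzhoufactalb}), producing the required $f\colon X\to Y$ compatible with $\alb_X$. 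Finally, the theorem is stated and proved in \emph{all} positive characteristics, including $p=2$: when $\alb_X$ is inseparable one takes the quotient by the $1$-foliation $\T_{X/\Alb(X)}$ (Lemma~\ref{lemma_albinsfact}), and when it is separable an estimate on the genus ratio $g_d'/g_d$ (Proposition~\ref{propo_nonmiserve}) rules out the possibility that the $f_d$ are almost all inseparable. Your restriction to $\cha(k)>2$ would lose this content.
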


Actually in \cite{gusunzhou}, this Theorem is tacitly proved over algebraically closed fields of characteristic different from $2$. In order to prove this we improve the results there obtained, proving the following Theorem (which is an improvement of Theorem 5.14 loc. cit.).

\begin{theorem}
Let $X$ be a minimal surface of general type with maximal Albanese dimension.
Then we have
\begin{equation*}
K_X^2\geq\Bigl(4+\min \{c(X),\frac{1}{2}\}\Bigr)\chi(\O_X)
\label{eq_cxl}
\end{equation*}
for a suitable constant $c(X)$.
\label{teo_mio2}
\end{theorem}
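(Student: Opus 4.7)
The strategy is to refine the asymptotic covering argument underlying Theorem 5.14 of \cite{gusunzhou} (the positive-characteristic Severi inequality) by keeping track of a quantitative slack in the key estimate; this slack will serve as the constant $c(X)$. Since $X$ has maximal Albanese dimension, the Albanese morphism $\alb_X\colon X\to A:=\Alb(X)$ is generically finite. Pulling back along the multiplication-by-$n$ isogenies $[n]\colon A\to A$ one obtains a tower of \'etale covers $\pi_n\colon X_n\to X$ of degree $n^{2q}$, with $K_{X_n}^2=n^{2q}K_X^2$ and $\chi(\O_{X_n})=n^{2q}\chi(\O_X)$.

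On each cover $X_n$, pull back an ample line bundle from $A$ and apply the volume/intersection-theoretic estimate used in \cite{gusunzhou} (which in its most flexible form produces a slope-type inequality for a fibration cut out by a pencil in a sufficiently positive multiple). This yields an inequality
\begin{equation*}
K_{X_n}^2 \geq 4\chi(\O_{X_n})+\delta_n
\end{equation*}
with $\delta_n\geq 0$, where $\delta_n$ encodes the contribution of the non-horizontal part of the relative canonical bundle together with the corrections coming from the singular fibres. Setting
\begin{equation*}
c(X):=\liminf_{n\to\infty}\frac{\delta_n}{\chi(\O_{X_n})},
\end{equation*}
dividing the inequality by $n^{2q}$ and passing to the limit gives $K_X^2 \geq (4+c(X))\chi(\O_X)$; taking the minimum with $1/2$ caps the improvement at the level above which the original Severi argument already suffices and beyond which the distinction becomes irrelevant for the applications (in particular, for Theorem \ref{teo_mio1}).

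The main obstacle, and the genuinely new ingredient relative to \cite{gusunzhou}, is to prevent the slack $\delta_n/\chi(\O_{X_n})$ from collapsing to zero in the limit: the estimate of loc.\ cit.\ is not sharp enough to exhibit a uniform positive improvement, so one must revisit the Clifford-type step and isolate a strictly positive contribution that is stable under the tower of \'etale covers. In characteristic zero this is handled using generic smoothness of the fibration and Mumford-type vanishing, both of which may fail in positive characteristic; one must substitute Frobenius pullbacks and Chern class inequalities for foliations in characteristic $p$, and then verify that the uniform threshold $1/2$ survives these replacements. Book-keeping the propagation of the slack along the tower $\{X_n\}$, and ruling out cancellations in the limit, is the technical heart of the argument.
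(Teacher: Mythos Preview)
Your proposal has a genuine gap: you have misunderstood what $c(X)$ is, and your definition makes the theorem vacuous.

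In the paper, $c(X)$ is defined \emph{geometrically} (Definition~\ref{def_cxl}) via the finitely many degree-two rational maps $\pi_i\colon X\dashrightarrow Y_i$ through which the Albanese morphism factors: one sets $c_i(X,L)=\dfrac{K_{Y_i}.L_{Y_i}}{K_X.L_X}$ for a very ample $L$ on $\Alb(X)$, then $c(X,L)=\min_i c_i(X,L)$ and $c(X)=\sup_L c(X,L)$. The theorem thus asserts a non-trivial relation between the numerical ratio $K_X^2/\chi(\O_X)$ and an invariant of the degree-two quotients of $X$. Your definition $c(X):=\liminf_n \delta_n/\chi(\O_{X_n})$, with $\delta_n$ the slack in a Severi-type estimate on $X_n$, produces nothing: since $K_{X_n}^2$ and $\chi(\O_{X_n})$ scale by the same factor $n^{2q}$ under the \'etale tower, the quantity $\bigl(K_{X_n}^2-4\chi(\O_{X_n})\bigr)/\chi(\O_{X_n})$ is \emph{constant} in $n$ and equals $K_X^2/\chi(\O_X)-4$. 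Any constant you extract this way makes the inequality either tautological or equivalent to plain Severi; there is nothing to ``prevent from collapsing''.

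What the paper actually does is feed the tower of pencil fibrations $\phi_d\colon\widetilde{X}_d\to\P^1$ into the refined slope inequality Theorem~\ref{teo_slopegusunzhou}(2), whose constant $c$ is governed by the condition $g_i\geq cg$ on the fibre genera of the degree-two image. The degree-two map $f_d\colon X_d\dashrightarrow Y_d$ descends (Proposition~\ref{propo_gusunzhoufactalb} or Lemma~\ref{lemma_albinsfact}) to one of the $\pi_i$, and a direct computation of the fibre genus of $\widetilde{Y}_d\to\P^1$ gives $\lim_{d\to\infty} g_d'/g_d = c_i(X,L)$. Passing to the limit in the slope inequality then yields $K_X^2\geq\bigl(4+\min\{c_i(X,L),\tfrac12\}\bigr)\chi(\O_X)$. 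The $\tfrac12$ is not an arbitrary cap but the upper bound on $c$ in Theorem~\ref{teo_slopegusunzhou}(2), and extending that bound from $\tfrac13$ to $\tfrac12$ is precisely the new technical input of the paper. Your sketch never identifies the degree-two quotient, never computes the genus ratio, and never invokes the non-hyperelliptic slope inequality with a $c$-dependent coefficient --- these are the steps that carry the content.
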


We will define precisely $c(X)$ in Section \ref{sec_albanese}: for now it is enough to know that it is a constant that depends an all the possible factorizations $X\to Y\to\Alb(X)$ of the Albanese morphism $\alb_X$ of $X$ where $X\to Y$ is finite of degree $2$.

Once Theorem \ref{teo_mio1} has been proved, we try to extend the result of \cite{lu} and \cite{conti} over fields of positive characteristic: because the proofs used there are strongly based on the theory of double covers, which behaves wildly over fields of characteristic two, we are only able to extend these results verbatim over algebraically closed fields of characteristic greater than $2$.
What we get are the two following Theorems.

\begin{theorem}[cf. \cite{lu} Theorem 1.3 and \cite{conti} Theorem 1.2 for the same result over the complex numbers]
\label{teo_mio3}
Let $X$ be a minimal surface of general type with maximal Albanese dimension over an algebraically closed field of characteristic greater than $2$ and assume that $K_X^2<\frac{9}{2}\chi(\O_X)$.
Then we have that 
\begin{equation*}
K_X^2\geq 4\chi(\O_X)+4(q-2)
\label{eq:luzuoneq3}
\end{equation*}
where $q$ is the dimension of the Albanese variety of $X$.

In particular equality holds, i.e.
\begin{equation*}
K_X^2=4\chi(\O_X)+4(q-2)
\label{eq_teomioa+}
\end{equation*}
if and only if the canonical model of $X$ is isomorphic to a double cover of a product elliptic surface ($q\geq 3$) $Y=C\times E$ where $E$ is an elliptic curve and $C$ is a curve of genus $q-1$, whose branch divisor $R$ has a most negligible singularities and
\begin{equation*}
R\sim_{lin} C_1+C_2+\sum_{i=1}^{2d}E_i
\label{eq_teomioabranch}
\end{equation*}
where  $E_i$ (respectively $C_i$) is a fibre of the first projection (respectively the second projection) of $C\times E$ and $d>7(q-2)$ or the canonical model of $X$ is a double cover of an Abelian surfaces branched over an ample divisor ($q=2$). 
Moreover we have that $\Alb(X)=\Alb(Y)$ and $q(X)=q'(X)$, i.e. the Picard scheme of $X$ is reduced.
In particular, if $q\geq 3$, the Albanese variety of $X$ is not simple.
\end{theorem}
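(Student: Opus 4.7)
The plan is to adapt the strategy of \cite{lu} and \cite{conti} to positive characteristic, using Theorem \ref{teo_mio1} as the replacement for its complex analogue. The first step is to apply Theorem \ref{teo_mio1} to obtain a degree-two morphism $f\colon X\to Y$ to a normal surface $Y$ through which $\alb_X$ factors as $X\to Y\to\Alb(X)$. Because $\cha>2$, the theory of double covers applies in its usual form: on a smooth model one has the involution, a line bundle $L$, and a branch divisor $R\sim 2L$ which is at worst negligible on the canonical model. This is precisely where the hypothesis $\cha>2$ enters, and it is what permits the rest of the argument to be transplanted from the complex setting.

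To derive the inequality $K_X^2\geq 4\chi(\O_X)+4(q-2)$, one composes $g\colon Y\to\Alb(X)$ with a projection from $\Alb(X)$ onto a suitable one-dimensional quotient to obtain a fibration $h\colon Y\to B$ whose base satisfies $g(B)\geq q-1$. Running on this fibration (and its pullback to $X$) the slope-type estimates used in \cite{lu}, Section 4, together with the Severi inequality on $Y$ furnished by \cite{gusunzhou} and the double-cover relations between the invariants of $X$ and those of $(Y,L,R)$, produces the required bound. For the classification of equality when $q\geq 3$, the next step is to inspect which of the intermediate inequalities are saturated: equality in the slope bound forces $h$ to be isotrivial with elliptic fibre $E$, while equality in the Severi inequality for $Y$ forces $Y$ to be a double cover of $\Alb(Y)$, and these two constraints combine to identify $Y$ birationally with the product $C\times E$ with $g(C)=q-1$. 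The branch divisor $R$ must then meet the elliptic fibre in exactly $2$ points, which pins down the class $R\sim_{lin} C_1+C_2+\sum_{i=1}^{2d}E_i$, and the lower bound $d>7(q-2)$ is extracted from $K_X^2<\frac{9}{2}\chi(\O_X)$ via the standard double-cover formulas for $K_X^2$ and $\chi(\O_X)$. The case $q=2$ reduces immediately to the Severi equality case of \cite{gusunzhou}, since $4(q-2)=0$.

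The remaining assertions $\Alb(X)=\Alb(Y)$, $q(X)=q'(X)$ and the non-simplicity of $\Alb(X)$ follow from the explicit product structure of $Y$: since $f$ is a cyclic cover of degree coprime to $\cha$, the formula $f_*\O_X=\O_Y\oplus L^{-1}$ gives $h^1(\O_X)=h^1(\O_Y)+h^1(L^{-1})$, which forces reducedness of the Picard scheme once the structure of $Y$ and the numerical class of $R$ are known; the decomposition $\Alb(Y)\cong\jac(C)\times E$ then exhibits the required non-simplicity. The principal obstacle is to verify that each step of the complex-analytic arguments of \cite{lu} and \cite{conti} survives in $\cha>2$: the slope-type inequalities and the classification of isotrivial elliptic fibrations must be revisited (and in particular one has to exclude inseparable contributions that could trivialise a fibration only after a Frobenius twist), and the double-cover computations must be checked to give the correct numerical equalities. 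Once these checks are in place, the chain of inequalities and the rigidity analysis for the equality case transfer essentially verbatim from the complex setting.
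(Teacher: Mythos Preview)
Your outline has the right starting point (Theorem~\ref{teo_mio1} and the classical double-cover formalism in $\cha>2$), but two of your central steps do not work as stated.

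First, the fibration you propose to build---``composing $g\colon Y\to\Alb(X)$ with a projection from $\Alb(X)$ onto a suitable one-dimensional quotient''---does not exist in general. A surjective morphism from an abelian variety to a curve forces the curve to be elliptic and forces $\Alb(X)$ to have an elliptic factor up to isogeny; there is no reason for this to hold a priori, and indeed non-simplicity of $\Alb(X)$ is a \emph{conclusion} of the theorem in the equality case, not an available hypothesis. So the slope-inequality machinery of \cite{lu}, Section~4, cannot be launched this way. The paper avoids this entirely: after passing to the minimal model $Y_0$ of the quotient, it classifies $Y_0$ by Kodaira dimension. When $\kappa(Y_0)\leq 1$ the elliptic fibration (or abelian-surface structure) comes for free and the double-cover formulas \ref{cansq}, \ref{eulchar} give the inequality directly (Proposition~\ref{propo_k<2}). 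When $Y_0$ is of general type one does \emph{not} fibre it; instead one shows the stronger bound $K_X^2-4\chi(\O_X)\geq 2(K_{Y_0}^2-4\chi(\O_{Y_0}))+2K_{Y_0}.L_0$ and then iterates the whole argument on $Y_0$ (Lemma~\ref{lemma_luzuogen+} plus induction on the $2$-adic valuation of $\deg\alb_X$).

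Second, your equality analysis is off. You write that ``equality in the Severi inequality for $Y$ forces $Y$ to be a double cover of $\Alb(Y)$''; but that is the characterization of the Severi line $K^2=4\chi$, which only intervenes when $q=2$. For $q\geq 3$ the mechanism is different: equality in the chain built above forces $Y_0$ to have $\kappa(Y_0)\leq 1$, forces $F.L_0=1$ with all $m_i=1$ and no multiple fibres, and then Proposition~\ref{propo_isoell} (which needs $F.L_0$ coprime to $\cha$, hence $\cha\neq 2$) identifies $Y_0$ with $C\times E$. Ruling out the $\kappa(Y_0)=2$ equality case is the subtle step: it requires showing that if equality held there, then $X\to Y$ would be \'etale and one could descend through a chain of \'etale double covers to a contradiction with the universal property of $\Alb(X)$ via Theorem~\ref{teo_funddoub}. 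None of that is visible in your sketch.
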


\begin{theorem}[cf. \cite{conti} Theorem 1.1 for the same result over the complex numbers]
\label{teo_mio4}
Let $X$ be a minimal surface of general type with maximal Albanese dimension with $K_X^2<\frac{9}{2}\chi(\O_X)$ over an algebraically closed field of characteristic greater than $2$ and $q$ be the dimension of the Albanese variety $\Alb(X)$ of $X$.
\begin{enumerate}
\item If $K^2_X>4\chi(\O_X)+4(q-2)$, then $K^2_X\geq 4\chi(\O_X)+8(q-2).$
\item If $q=2$ and $K^2_X>4\chi(\O_X)$, then $K^2_X\geq 4\chi(\O_X)+2.$
\item If $q\geq 3$, equality holds, i.e.
\begin{equation*}
\label{eq_8q-2}
K^2_X=4\chi(\O_X)+8(q-2),
\end{equation*}
if and only if the canonical model of $X$ is isomorphic to a double cover of a smooth isotrivial elliptic surface fibration $Y$ over a curve $C$ of genus $q-1$, branched over a divisor $R$ with at most negligible singularities for which $K_Y.R=8(q-2)$. In particular, we have that $\Alb(X)\simeq\Alb(Y)$ and $q(X)=q'(X)$, i.e. the Picard variety of $X$ is reduced, and the Albanese variety of $X$ is not simple.
\end{enumerate}
\end{theorem}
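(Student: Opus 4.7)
The plan is to follow, \emph{mutatis mutandis}, the proof of \cite{conti} Theorem 1.1, which is viable because over algebraically closed fields of characteristic greater than $2$ the theory of classical double covers behaves as over $\C$.

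By Theorem \ref{teo_mio1} there exists a degree-two morphism $f\colon X\to Y$ to a normal surface $Y$ factoring the Albanese morphism. Passing to a smooth birational model of $Y$ and the corresponding resolution of $X$, I would present $f$ as a classical double cover determined by a line bundle $L$ with branch divisor $R\sim 2L$. The standard identities $K_X^2=2(K_Y+L)^2$ and $\chi(\O_X)=2\chi(\O_Y)+\tfrac12 L(K_Y+L)$ then give
\[
K_X^2-4\chi(\O_X)=2K_Y^2+2K_Y\cdot L-8\chi(\O_Y),
\]
so everything reduces to controlling the numerical invariants of $Y$ and of the branch divisor.

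Next I would analyse the structure of $Y$. Since $Y\to\Alb(X)$ is generically finite, $Y$ has maximal Albanese dimension with $q(Y)\geq q$, and Theorem \ref{teo_mio3} (applied to, or lifted through, the factorization) leaves only two possibilities: either we are in the Severi-equality configuration with $Y=C\times E$ and $K_X^2=4\chi(\O_X)+4(q-2)$, which is excluded by the hypothesis of part (1), or the double-cover invariants must take strictly larger values. In the excluded configuration, the generic elliptic fibre $F$ of $Y$ satisfies $R\cdot F=2$; ruling this out (while keeping $R\cdot F$ even) forces either $R\cdot F\geq 4$ or the base curve to have genus at least $q$, and in either case the above displayed formula picks up an extra $4(q-2)$, yielding part (1). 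Part (2) is the analogous integrality/parity computation when $q=2$ and $Y$ is an abelian surface: the ample branch divisor forces $L(K_Y+L)=L^2$ to be a positive even integer, and strictness in the Severi inequality upgrades to $K_X^2\geq 4\chi(\O_X)+2$.

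For the equality case in part (3), tracing the inequalities of the previous step with all of them becoming equalities pins down $Y\to C$ to be a smooth isotrivial elliptic fibration over a curve of genus $q-1$, with $R\cdot F=4$ on a generic fibre, $K_Y\cdot R=8(q-2)$, and $R$ having at worst negligible singularities (so that the canonical model of $X$ coincides with the double cover). The converse is a direct invariant computation from the double-cover formulas. The assertions $\Alb(X)\simeq\Alb(Y)$, reducedness of $\Pic^0(X)$, and non-simplicity of $\Alb(X)$ follow from the fibration structure $Y\to C$ combined with the factorization of $\alb_X$ through $f$.

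The principal obstacle is positive characteristic. One must verify that the whole double-cover machinery (branch divisors, canonical resolutions of singularities, notion of negligibility) still works as over $\C$ — which is true in characteristic $>2$ but fails in characteristic $2$, explaining the hypothesis — and, as in Theorem \ref{teo_mio3}, one must rule out non-reduced components of $\Pic^0(X)$ by exploiting the isotrivial fibration on $Y$ so as to force $q(X)=q'(X)$. Showing that the jump from $4(q-2)$ to $8(q-2)$ is the \emph{next} possible value, rather than some intermediate one, is the delicate combinatorial point: it requires simultaneously controlling the genus of the base of $Y\to C$ and the intersection number $R\cdot F$, with both parities and Albanese-dimension constraints at play.
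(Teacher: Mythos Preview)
Your outline captures the overall shape of the argument, but there are two genuine gaps.

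First, your displayed formula $K_X^2-4\chi(\O_X)=2K_Y^2+2K_Y\cdot L-8\chi(\O_Y)$ is only valid when the branch divisor has at most negligible singularities and the canonical resolution $X_t$ already equals $X$. In general there are correction terms $2\sum_{i}(m_i-1)+n$ coming from the canonical resolution and from contracting $(-1)$-curves, and these terms are precisely what drive the case analysis. The paper's proof of part (1) shows that \emph{each} of the five quantities $n$, $m_i$, $a_j$, $\deg(\L_0)$, $F\cdot L_0$ can only increase at the cost of forcing $F\cdot R_0\geq 4$: for instance $n>0$ forces some $m_i>1$, and $m_i>1$ forces (via the local classification of non-simple curve singularities) that $(F\cdot R_0)_x\geq 3$ at the bad point, hence $F\cdot R_0\geq 4$ by parity. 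Your phrase ``or the base curve to have genus at least $q$'' is not correct here: the base always has genus $q-1$ by Lemma \ref{lemma_ueno}, so the jump comes entirely from $F\cdot L_0$ passing from $1$ to $2$.

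Second, and more seriously, in part (3) you do not explain why $Y_0$ cannot be of general type. By Remark \ref{rem_8q-2}, equality $K_X^2-4\chi(\O_X)=8(q-2)$ with $Y_0$ of general type would force $\pi\colon X\to Y$ to be \'etale and $Y$ itself to satisfy $K_Y^2=4\chi(\O_Y)+4(q-2)$. Theorem \ref{teo_mio3} then realizes the canonical model of $Y$ as a double cover of $C\times E$ branched over an ample divisor, and by Theorem \ref{teo_funddoub} this cover induces an isomorphism of algebraic fundamental groups. The \'etale cover $X\to Y$ therefore descends to an \'etale cover $\overline{Y}\to C\times E$, and one checks (using Proposition \ref{propo_exactpicce} to see that the defining $2$-torsion class lies in $\Pic^0$) that $\overline{Y}$ embeds in an abelian variety, so $\alb_{\overline{Y}}$ has degree one onto its image. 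This forces $\alb_X$ to have degree exactly two and hence $\overline{Y}=Y$, contradicting that $Y$ is of general type while $\overline{Y}$ is elliptic. This fundamental-group step is the heart of part (3) and is absent from your sketch; ``tracing the inequalities'' alone does not exclude the general-type case.
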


Over algebraically closed fields of characteristic two, the inequality of Theorem \ref{teo_mio3} is still valid, provided that one adds an ad hoc condition that will be discussed in Section \ref{sec_char2}

We would like to stress that all the inequalities in Theorems \ref{teo_mio3} and \ref{teo_mio4} are sharp for every $q$: in section \ref{sec_ex} we give examples for which the equalities hold.

All the materials presented in this article have been written with more details in the PhD Thesis of the author (\cite{contitesi}) and we will refer to it for some technical results whose proof we do not report here.

The article is organized as follows: in Section \ref{sec_pre} we recall some known facts about elliptic surfaces with maximal Albanese dimension and double covers (with a particular attention over fields of characteristic $2$); in Section \ref{sec_ex} we give examples of surfaces of general type with maximal Albanese dimension close to the Severi lines; in Section \ref{sec_nonhyper} we prove a slope-type inequality for non-hyperelliptic surface fibrations; in Section \ref{sec_albanese} we prove Theorems \ref{teo_mio1} and \ref{teo_mio2} using the results of the previous Section; in Section \ref{sec_severi} we prove Theorem \ref{teo_mio3}; in Section \ref{sec_close} we prove Theorem \ref{teo_mio4}; finally, in Section \ref{sec_char2} we give some partial results over fields of characteristic $2$.
Notice that the results of Sections \ref{sec_nonhyper} and \ref{sec_albanese} are independent with respect to the others so that the reader may skip them and go directly to Section \ref{sec_severi} if they takes for granted Theorem \ref{teo_mio1}.

\paragraph{Notation and conventions}
We work over algebraically closed fields of positive characteristic. All varieties are supposed to be projective. Given a surface $X$ we denote by $\Alb(X)$ its Albanese variety, by $\alb_X\colon X\to \Alb(X)$ its Albanese morphism, by $q(X)$ the dimension of $\Alb(X)$ by $q'(X)=h^1(X,\O_X)$ the irregularity and by $e(X)$ the topological Euler characteristic. In this paper $X$ is a surface of general type with maximal Albanese dimension, $C$ is a curve of genus $g(C)>1$, $E$ is an elliptic curve. 

Given the product $C\times E$, we denote by $\pi_C$ and $\pi_E$ the two projections respectively to $C$ and $E$, and by $E_c$ the fibre of $\pi_C$ over $c\in C$ (sometimes $E$ or $E_i$ if it is not necessary to specify the point $c$), respectively $C_e$ the fibre of $\pi_E$ over $e\in E$. Given $L_C\in\Pic(C)$ and $L_E\in\Pic(E)$ we denote by $L_E\boxtimes L_C=\pi_C^*(L_C)\otimes\pi_E^*(L_E)$. By $c_0$ we mean a fixed point of $C$ and we denote by $\Hom_{c_0}(C,E)$ the group of homomorphisms between $C$ and $E$ which send $c_0$ to the origin of $E$ (the group structure is given by the one on $E$). For every $f\in\Hom(C,E)$ we denote by $f+e\in \Hom(C,E)$ the morphism given by $c\mapsto f(c)+e$ and by $\Gamma_f$ its graph as a divisor on $C\times E$.

We use interchangeably the notion of line bundles and Cartier divisors and we use both additive and multiplicative notations.

Given a scheme $X$ defined over an algebraically closed field of characteristic $p$ we denote by $X^{(n)}$ the scheme which is abstractly isomorphic to $X$ and whose structure morphism $X^{(n)}\to \spec(k)$ is the composition of the the structure morphism $X \to \spec(k)$ with the morphism $\spec(k)\to\spec(k)$ defined by $x\mapsto x^{p^n}$.
We denote by $F_k\colon X^{(n)}\to X^{(n-1)}$ the $k$-linear Frobenius morphism and, for a morphism of scheme $f\colon X\to Y$, we denote by $f^{(n)}\colon X^{(n)}\to Y^{(n)}$ the induced morphism.

\paragraph{Acknowledgement} The author would like to thank his advisor Rita Pardini for useful mathematical discussion concerning the topics of the paper. 

\section{Preliminaries}
\label{sec_pre}

In this section we describe the constructions and we expose preliminary results which will be needed in the proofs of the main Theorems.

\subsection{Elliptic surfaces with maximal Albanese dimension}
First we briefly recall the results presented in \cite{conti} Section 2.1 about the Picard group of a product elliptic surface $C\times E$: observe that there it is assumed that $C\times E$ is defined over the complex numbers, but the proof works over any algebraically closed field. Here $C$ is a curve of genus $g>1$ and $E$ an elliptic curve. 
We denote by $\Hom_{c_0}(C,E)$ the group of morphisms between $C$ and $E$ for which the image of $c_0\in C$ is the origin of the elliptic curve (the group structure is given by the one of $E$) and by $i_c\colon E\to C\times E$ the inclusion defined by $e\mapsto (c,e)$.

\begin{proposition}[\cite{conti} Propositions 2.1 and 2.3]
\label{propo_exactpicce}
In the above settings we have the following split exact sequence of groups:
\begin{equation}
0\rightarrow \Pic (C)\times \Pic(E) \xrightarrow{\alpha} \Pic(C\times E) \xrightarrow{\beta} \Hom_{c_0}(C,E) \rightarrow 0,
\label{pics}
\end{equation}
where $\beta$ is defined by $\beta(D)(c)=i_c^*(D)-i_{c_0}^*(D)$ (here we are using the isomorphism $E\simeq \jac(E)$ given by the Abel-Jacobi map) and the section $s$ of $\beta$ is given by
\[s\colon \Hom_{c_0}(C,E) \to  \Pic(C\times E)\quad s(f)=\Gamma_f-C_0-\sum_{c\in f^{-1}(0)} a_cE_c,\]
where $a_c$ is  the multiplicity of $f$ at $c$.

Moreover, suppose there exists a finite Abelian group $G$ acting freely on $C$, $E$ and diagonally on $C\times E$ (i.e. $g\cdot(c,e)=(g\cdot c,g\cdot e)$). Then it is possible to give to $\Pic(C)$, $\Pic(E)$, $\Pic(C\times E)$ and $\Hom_{c_0}(C,E)$ a $G$-module structure such that the short exact sequence \ref{pics} is an exact sequence of $G$-modules.
\end{proposition}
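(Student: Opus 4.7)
The plan is to construct $\alpha$ and $\beta$ by hand, check exactness by the seesaw principle, verify that the specific formula for $s$ is a homomorphic section of $\beta$, and finally transport everything through the diagonal $G$-action.

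For the easy half, define $\alpha(L_C,L_E)=\pi_C^*L_C\otimes\pi_E^*L_E$; injectivity follows from the fact that pulling back along the section $\sigma\colon C\to C\times E$, $c\mapsto(c,0)$, and along $i_{c_0}$ gives a left inverse of $\alpha$. The map $\beta(D)(c) = i_c^*D - i_{c_0}^*D$ lands in $\Pic^0(E)\simeq E$ because the fibres $E_c$ of $\pi_C$ are all algebraically equivalent, so the two restrictions have the same degree. That $c\mapsto\beta(D)(c)$ is in fact a morphism $C\to E$ (and not merely a set-theoretic map) follows from the universal property of $\Pic^0(E)\simeq\widehat{E}\simeq E$ applied to the family of degree-zero line bundles on $E$ parametrized by $C$ defined by $D-\pi_E^*i_{c_0}^*D$. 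By construction $\beta(D)(c_0)=0$, so $\beta$ lands in $\Hom_{c_0}(C,E)$, and $\beta\circ\alpha=0$ is immediate.

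For exactness in the middle, assume $\beta(D)=0$ and set $D'=D-\pi_E^*i_{c_0}^*D$; a direct check gives $i_c^*D'\sim 0$ on $E$ for every $c\in C$. The seesaw principle (equivalently, cohomology and base change applied to $(\pi_C)_*D'$) then yields $D'=\pi_C^*L_C$ for some $L_C\in\Pic(C)$, so $D=\alpha(L_C,i_{c_0}^*D)\in\im\alpha$. To check $\beta\circ s=\mathrm{id}$ one computes $\beta(\Gamma_f)(c)=[f(c)]-[f(c_0)]=f(c)$ using that $\pi_C|_{\Gamma_f}$ is an isomorphism and $\Gamma_f$ meets each fibre $\{c\}\times E$ transversally, while $\beta(C_0)=0=\beta(E_c)$ for each $c$ since these classes already lie in $\im\alpha$; this also shows $\beta$ is surjective.

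The most delicate step is verifying that $s$ is a group homomorphism, which amounts to the linear-equivalence identity
\[
\Gamma_{f+g}-\Gamma_f-\Gamma_g+C_0-\sum_{c}\bigl(a_c^{f+g}-a_c^f-a_c^g\bigr)E_c\sim 0
\]
in $\Pic(C\times E)$. This follows from the theorem of the square applied to $\O_E(0)\in\Pic(E)$ with the morphisms $f,g\colon C\to E$: the correction terms $-C_0-\sum a_cE_c$ in the definition of $s$ are introduced precisely to absorb the difference $(f+g)^*\O_E(0)-f^*\O_E(0)-g^*\O_E(0)+\O_E$, which a priori lies in $\im\alpha$ and must be cancelled. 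Finally, for the $G$-equivariant statement, equip each of $\Pic(C)$, $\Pic(E)$ and $\Pic(C\times E)$ with the pullback $G$-action; since the action on $C\times E$ is diagonal, $\alpha$ is manifestly $G$-equivariant, $\im\alpha$ is a $G$-submodule, and the quotient isomorphism $\Pic(C\times E)/\im\alpha\simeq\Hom_{c_0}(C,E)$ endows $\Hom_{c_0}(C,E)$ with the canonical induced $G$-module structure that makes $\beta$ equivariant.
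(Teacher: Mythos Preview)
The paper does not provide an independent proof of this proposition; it simply cites \cite{conti} Propositions 2.1 and 2.3 and remarks that the argument there, written over $\C$, works over any algebraically closed field. Your proposal is therefore not being compared to anything explicit in the present paper, but it is essentially a correct reconstruction of the standard argument.

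One point deserves sharpening. Your justification that $s$ is a group homomorphism appeals to ``the theorem of the square applied to $\O_E(0)$ with the morphisms $f,g$'', but as written this is a bit elliptical: the theorem of the square is a pointwise statement on $E$, and passing to a relative statement over $C$ requires an extra seesaw/theorem-of-the-cube step that you do not spell out. A cleaner and entirely self-contained way to finish, using only what you have already set up, is the following. You have shown $\beta\circ s=\mathrm{id}$, so $D:=s(f+g)-s(f)-s(g)\in\ker\beta=\im\alpha$; write $D=\alpha(L_C,L_E)$. Restricting $s(f)$ to $E_{c_0}$ via $i_{c_0}$ gives $[f(c_0)]-[0]-0=0$ (the vertical fibres restrict trivially since $E_c^2=0$), so $L_E=i_{c_0}^*D=0$. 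Restricting along the zero section $\sigma\colon c\mapsto(c,0)$ gives $\sigma^*s(f)=\sum_{c\in f^{-1}(0)}a_c[c]-0-\sum_{c\in f^{-1}(0)}a_c[c]=0$, so $L_C=\sigma^*D=0$. Hence $D=0$ and $s$ is a homomorphism. This is exactly the role the correction terms $-C_0-\sum a_cE_c$ play, and it avoids any appeal beyond the seesaw principle you already used for exactness in the middle.

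The $G$-equivariant part is fine as you wrote it: the pullback action on the three Picard groups makes $\alpha$ equivariant because the $G$-action on $C\times E$ is diagonal, and the quotient then induces the required $G$-module structure on $\Hom_{c_0}(C,E)$.
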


\begin{lemma}[\cite{conti} Lemma 2.5]
\label{lemma_trivialell}
In the same situation as in the second part of Proposition \ref{propo_exactpicce}, the elliptic  $(C\times E)/G\to C/G$  with general fibre $E$ is trivial if and only if there is a line bundle $L$ on $C\times E$ which is fixed by the action of $G$ for which $L.E=1$ where $E$ (by an abuse of notation) is a general fibre of the first projection.
\end{lemma}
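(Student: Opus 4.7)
The plan is to prove the two implications separately. The main tool is Proposition \ref{propo_exactpicce} together with the elementary fact that any free action of a finite group on an elliptic curve $E$ must be by translations (since any non-translation automorphism of $E$ has fixed points). Write the induced action on $E$ as $g\cdot e = e + t_g$, where $g \mapsto t_g$ is a group homomorphism.

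For the implication ``trivial fibration $\Rightarrow$ such an $L$ exists'', I would pick a section of the trivial fibration $(C/G)\times E \to C/G$; its image is a divisor $D$ with $D\cdot E = 1$. Pulling back via the quotient morphism $q\colon C\times E \to (C\times E)/G = (C/G)\times E$ produces $L := q^*\O(D)$, which is $G$-invariant (being pulled back from the quotient) and satisfies $L\cdot E = 1$, since $q$ restricts to an isomorphism on each fibre of $\pi_C$ (because $G$ acts freely on $C$ and so has trivial stabilizer on fibres).

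For the converse, assume $L$ is $G$-invariant with $L\cdot E = 1$. I would set $\psi := \beta(L) \in \Hom_{c_0}(C,E)$ in the notation of Proposition \ref{propo_exactpicce}, so that $\psi(c) = i_c^*L - i_{c_0}^*L$ under $\Pic^0(E) \cong E$. Combining $g^*L \cong L$ with the identity $i_{g\cdot c} = g \circ i_c \circ t_{-t_g}$ and the standard formula $t_a^*\O_E(x) = \O_E(x-a)$, one checks that $\psi(g\cdot c) = \psi(c) + t_g$, i.e.\ $\psi$ is equivariant when $G$ acts on $E$ by translations. I would then consider the automorphism $\Phi\colon C\times E \to C\times E$, $(c,e) \mapsto (c, e - \psi(c))$, which preserves the first projection; a short calculation using the equivariance of $\psi$ shows that the conjugated action $\Phi \circ g \circ \Phi^{-1}$ is $(c,e) \mapsto (g\cdot c, e)$, trivial on the $E$-factor. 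Passing to quotients, $\Phi$ descends to an isomorphism $(C\times E)/G \xrightarrow{\sim} (C/G)\times E$ over $C/G$, which is precisely the triviality of the fibration.

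The main obstacle I expect is the bookkeeping in establishing the equivariance relation $\psi(g\cdot c) = \psi(c) + t_g$: both the definition of the diagonal $G$-action and the translation formula for $\Pic^1(E)$ under $t_a^*$ contribute signs, and getting them to match up correctly is the computational crux of the argument. That the signs ultimately line up is also guaranteed abstractly by the $G$-module structure on the exact sequence of Proposition \ref{propo_exactpicce}, which could in fact be invoked to shortcut the direct computation.
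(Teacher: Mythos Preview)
The paper does not supply its own proof of this lemma; it is simply cited from the author's earlier work. Your argument is correct: the forward direction via pulling back a section is immediate, and for the converse the equivariance computation $\psi(g\cdot c)=\psi(c)+t_g$ goes through exactly as you indicate (the hypothesis $L\cdot E=1$ is what makes the translation contribute $t_g$ rather than a multiple), after which conjugation by $\Phi$ trivializes the action on the $E$-factor and hence the fibration over $C/G$.
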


Now we collect some known results about elliptic surfaces of maximal Albanese dimension.
Let $f\colon X\to C$ be an elliptic surface fibration.
Because in the present paper we are interested in surfaces of maximal Albanese dimension, we can assume that that all the fibres are smooth (but possibly multiple).
Indeed thanks to the Kodaira's classification of singular fibres of an elliptic surface we know that every fibre of an elliptic surface which is not smooth is a union of rational curves: in particular it is contracted by the Albanese morphism (cf. \cite{barth} Table V.3 and \cite{schutt} Section 4.1).

Consider the first higher pushforward $R^1f_*\O_X=\L\oplus T$ of $\O_X$ where $T$ is the torsion subsheaf of $R^1f_*\O_X$ and $\L=R^1f_*\O_X/T$ is a line bundle: it is known that a point $c\in C$ is in the support of $T$ if and only if $h^0(F_c,\O_{F_c})\geq 2$ where $F_c$ is the fibre of $f$ over $c$, in particular we have that $F_c$ has to be a multiple fibre (cf. \cite{badescu} page Definition 7.14). 
Recall also that such a fibre is called exceptional.

We have the following formula for the canonical bundle of $X$ (cf. ibidem Theorem 7.15):
\begin{equation}
K_X=f^*(\L^{-1}\otimes\omega_C)\otimes\O_X\Bigl(\sum_{i=1}^ma_iF_i\Bigr)
\label{eq_canellsur}
\end{equation}
where
\begin{itemize}
\item $n_iF_i=F_{c_i}$ are all the multiple fibres of $f$ for $i=1,\ldots m$ with $F_i$ reduced;
\item $0\leq a_i<n_i$;
\item $a_i=n_i-1$ if $F_{b_i}$ is not an exceptional fibre.
\end{itemize}
In particular the canonical divisor $K_X$ is contracted by $f$ and satisfies $K_X^2=0$.

We have the following Lemmas

\begin{lemma}[cf. \cite{ueno} Lemma 3.4]
\label{lemma_ueno}
Let $f\colon X\to C$ be an elliptic surface of maximal Albanese dimension, $\alb_X\colon X\to \Alb(X)$ be the Albanese morphism of $X$ and $AJ_C\colon C\to \jac(C)$ be the Abel-Jacobi map of $C$.
Then the kernel of the induced morphism $\alb_f\colon \Alb(X)\to\jac(C)$ is an elliptic curve which is isogenous to any fibre of $f$.
In particular we have $q(X)=g(C)+1$.
\end{lemma}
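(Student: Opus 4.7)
The plan is to identify the kernel $K=\ker(\alb_f)$ with the image in $\Alb(X)$ of a general fibre of $f$, and then show that this image exhausts $K$ by a quotient-by-$E'$ argument. Throughout, if a fibre $F_c$ happens to be multiple I replace it by its underlying reduced elliptic curve.

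First I would verify that $\alb_f$ is surjective and that $\alb_X$ does not contract the fibres of $f$. The morphism $\alb_f$ arises by the universal property of the Albanese applied to $AJ_C\circ f$; since $AJ_C(C)$ generates $\jac(C)$ and $\alb_f(\Alb(X))$ is an abelian subvariety containing this generating set, $\alb_f$ is surjective, so $\dim\Alb(X)=g(C)+\dim K$. If $\alb_X$ collapsed a general fibre $F_c$ to a point, then $\alb_X$ would factor through $f\colon X\to C$, forcing $\alb_X(X)$ to have dimension at most one, which contradicts the maximal Albanese dimension hypothesis.

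Next I would build an elliptic subvariety $E'\subset K$. For each $c\in C$ fix $x_c\in F_c$; by the universal property of the Albanese of $F_c$ (which is $F_c$ itself, being an elliptic curve), the translated map $y\mapsto \alb_X(y)-\alb_X(x_c)$ is a homomorphism, whose image $E_c\subset\Alb(X)$ is an abelian subvariety independent of $x_c$. The previous paragraph guarantees $E_c\neq 0$, and $\alb_f\circ\alb_X$ is constantly $AJ_C(c)$ on $F_c$, so $E_c\subset K$ and $E_c$ is an elliptic curve. As $c$ varies, $\{E_c\}$ is an algebraic family of abelian subvarieties of the fixed abelian variety $\Alb(X)$; by the rigidity of abelian subvarieties (an algebraic family of closed subgroup schemes of an abelian variety over a connected reduced base is constant), one concludes $E_c=E'$ for all $c$.

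Finally I would conclude via the quotient $\pi\colon\Alb(X)\to\Alb(X)/E'$. By the previous paragraph $\pi\circ\alb_X$ collapses every fibre $F_c$ to a single point; since $f$ has connected fibres and $\Alb(X)/E'$ is separated, $\pi\circ\alb_X$ factors through $f$, and then by the universal property of $\jac(C)$ it factors through $\jac(C)$, producing $\tilde\psi\colon\jac(C)\to\Alb(X)/E'$ with $\pi\circ\alb_X=\tilde\psi\circ\alb_f\circ\alb_X$. Since $\alb_X(X)$ generates $\Alb(X)$ this forces $\pi=\tilde\psi\circ\alb_f$, and hence $K=\ker(\alb_f)\subset\ker(\pi)=E'$. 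Combined with $E'\subset K$ this gives $K=E'$, an elliptic curve; the induced surjection from any (reduced) fibre $F_c\to E'=K$ between elliptic curves is then an isogeny, and the dimension formula yields $q(X)=\dim K+g(C)=g(C)+1$.

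The step I expect to be the main obstacle is the rigidity of the family $\{E_c\}$: one must justify that an algebraic family of abelian subvarieties of a fixed abelian variety is constant, a statement that is classical in characteristic zero but in positive characteristic requires distinguishing the reduced part from possible non-reduced infinitesimal phenomena. Fortunately the equality $K=E'$ proved at the end shows that $K$ is smooth and connected, so no pathology of the group scheme $K$ survives into the final statement.
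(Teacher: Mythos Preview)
The paper does not supply its own proof of this lemma; it cites Ueno and moves directly to the next statement. Your proposal is a correct self-contained argument, and it follows what is essentially the standard route: produce an elliptic curve $E'\subset K$ as the translated image of a general fibre, invoke rigidity of abelian subvarieties to show $E'$ is independent of the fibre, then quotient by $E'$ and use the universal property of $\jac(C)$ to obtain the reverse inclusion $K\subset E'$.

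Your flagged concern about the rigidity step in positive characteristic is reasonable but in the end harmless. Each $E_c$ is by construction the image of a homomorphism from an elliptic curve, hence a genuine (reduced, connected) abelian subvariety of $\Alb(X)$; and the set of abelian subvarieties of a fixed abelian variety is countable in any characteristic, since every abelian subvariety is the image of some idempotent in the finite-dimensional $\Q$-algebra $\mathrm{End}(\Alb(X))\otimes\Q$, and idempotents in a finite-dimensional $\Q$-algebra are countable. The family $c\mapsto E_c$ therefore defines a morphism from a connected base to a countable set of closed points in the Hilbert scheme, hence is constant. Your closing remark that $K=E'$ forces $K$ to be a smooth connected group scheme is also correct and disposes of any worry about non-reduced kernels.
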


\begin{lemma} 
\label{lemma_cossec}
Let $f\colon X\to C$ be an elliptic surface of maximal Albanese dimension. 
Then the topological Euler characteristic $e(X)$ and the Euler characteristic $\chi(\O_X)$ of the structure sheaf of $X$ are equal to zero.
\end{lemma}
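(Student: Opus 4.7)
The approach is to combine Noether's formula, which holds in any characteristic, with the two ingredients: $K_X^2=0$ already established from the canonical bundle formula \eqref{eq_canellsur}, and the vanishing of the $\ell$-adic topological Euler characteristic $e(X)$. Once both are known, the identity $12\chi(\O_X)=K_X^2+e(X)$ immediately forces $\chi(\O_X)=0$, so the heart of the argument is to show $e(X)=0$.

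First I would invoke the observation made just before the statement: because $X$ has maximal Albanese dimension, no fibre of $f$ can contain a rational component, so by Kodaira's classification every fibre is, set-theoretically, a smooth elliptic curve, appearing with multiplicities $n_1,\dots,n_m$ over the finitely many points $c_1,\dots,c_m\in C$ supporting the multiple fibres. Since the $\ell$-adic Euler characteristic depends only on the underlying reduced scheme, one has $e(F_{c_i})=e((F_{c_i})_{\mathrm{red}})=0$ for each $i$, and also $e(F_{\mathrm{gen}})=0$ for the generic fibre.

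Next I would stratify $C=U\sqcup\{c_1,\dots,c_m\}$ where $f$ is smooth over $U$. By additivity of the $\ell$-adic Euler characteristic,
\[
e(X)=e(f^{-1}(U))+\sum_{i=1}^{m}e(F_{c_i}) = e(f^{-1}(U)).
\]
For the smooth proper morphism $f\colon f^{-1}(U)\to U$ with elliptic fibre $F_{\mathrm{gen}}$, the higher direct images $R^if_*\Q_\ell$ are lisse and tame on the smooth curve $U$, so the Grothendieck-Ogg-Shafarevich formula together with the Leray spectral sequence yields the multiplicativity $e(f^{-1}(U))=e(U)\cdot e(F_{\mathrm{gen}})=0$. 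Hence $e(X)=0$, and Noether's formula gives $\chi(\O_X)=0$ as well.

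The only subtle point is the multiplicativity of the Euler characteristic for a smooth proper elliptic fibration in positive characteristic; this is a standard consequence of the tameness of the lisse sheaves $R^if_*\Q_\ell$ on a smooth curve, but it is the one place where one must invoke machinery beyond the immediate formalism. Everything else follows directly from the preliminary discussion in the excerpt.
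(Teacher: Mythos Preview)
Your proof is correct and follows essentially the same strategy as the paper: establish $e(X)=0$ from the fact that all fibres are (set-theoretically) smooth elliptic curves, then deduce $\chi(\O_X)=0$ from Noether's formula together with $K_X^2=0$. The only difference is presentational: the paper simply cites \cite{cossec} Proposition 5.1.6 for the vanishing of $e(X)$, whereas you unpack the argument via the stratification $C=U\sqcup\{c_1,\dots,c_m\}$, additivity, and multiplicativity over the smooth locus. Your flagged ``subtle point'' about tameness of $R^if_*\Q_\ell$ at the multiple-fibre points is exactly the content that is packaged into the cited result; it holds because a smooth multiple fibre $nE$ gives good reduction of the Jacobian, so the inertia action on the $\ell$-adic Tate module is trivial (in particular tame) by N\'eron--Ogg--Shafarevich, and hence the Swan terms in the Grothendieck--Ogg--Shafarevich formula vanish.
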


\begin{proof}
Because the fibres of an elliptic surface of maximal Albanese dimension are all smooth (but possibly multiple), the value of $e(X)$ follows directly from \cite{cossec} Proposition 5.1.6, while $\chi(\O_X)=0$ follows from Noether's formula and $K_X^2=0$.
\end{proof}

\begin{remark}
\label{rem_irr}
Let $f\colon X\to C$ be an elliptic surface of maximal Albanese dimension.
By the Leray spectral sequence 
\begin{equation}
E^{pq}_2=H^p(C,R^qf_*\O_X)\Rightarrow H^{p+q}(X,\O_X)
\label{eq_ssellfib}
\end{equation}
we derive (using the low-degree exact sequence)
\begin{equation}
\begin{split}
0\to H^1(C,\O_C)\to H^1(X,\O_X)\to H^0(C,R^1f_*\O_X)\to\\
\to H^2(C,\O_C)=0\to H^2(X,\O_X)\to H^1(R^1f_*\O_X)\to 0.
\end{split}
\label{eq_lowdegreeterm}
\end{equation}

In particular, by Riemann-Roch, we have
\begin{equation}
\begin{split}
\chi(\O_X)=\chi(&\O_C)-\chi(R^1f_*\O_X)=\\
=-\deg(R^1f_*\O_X)&=-\deg(\L)-h^0(C,T)
\end{split}
\label{eq:eulerellfib}
\end{equation}
and
\begin{equation}
q'(X)=g(C)+h^0(C,\L)+h^0(C,T).
\label{eq_irralb}
\end{equation}

As we have we have shown in Lemmas \ref{lemma_ueno} and \ref{lemma_cossec} we have that $\chi(\O_X)=0$ and $q'(X)\geq q(X)=g(C)+1$.
In particular, if there are no exceptional fibres (e.g. there are no multiple fibres), we  easily derive $R^1f_*\O_X=\O_C$ and $q'(X)=g(C)+1=q(X)$.
Observe also that 
\begin{equation}
-\deg(\L)=h^0(C,T)\geq 0
\label{eq_degl0}
\end{equation}
and that $-\deg(\L)=0$ implies that there are no exceptional fibres.
\end{remark}

We recall the following result about smooth elliptic surface fibration: a detailed proof of this can be found in \cite{contitesi} Corollary 1.7.11.

\begin{proposition}
Let $f\colon X\to C$ be a smooth elliptic surface fibration, i.e. all the fibres of $f$ are smooth curves of genus $1$, with maximal Albanese dimension. 
Suppose that there exists a line bundle $L$ such that $L.F=d>0$ is coprime with the characteristic of the ground field.
Then $f\colon X\to C$ becomes trivial after a suitable \'etale base change.
More precisely, there exists a subgroup $G$ of the $d$-torsion points of the general fibre $F$ acting by translation on $F$, freely on a smooth curve $C'$ and diagonally on $C'\times F$ such that 
\begin{equation}
\begin{tikzcd}
C'\times F\arrow{d}{}\arrow{r}{} & X \arrow{d}{f}\\
C'\arrow{r}{} & C
\end{tikzcd}
\label{eq_isoell+0}
\end{equation}
is commutative with $C'/{G}=C$, $(C'\times F)/{G}=X$ and the horizontal arrows are the quotients by ${G}$.

In particular, if $q(X)=2$, then $X$ is an Abelian variety.
\label{propo_isoell}
\end{proposition}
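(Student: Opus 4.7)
The plan is to combine three ingredients: smoothness plus projectivity of $C$ force isotriviality via the $j$-invariant, the line bundle $L$ produces an \'etale $J[d]$-torsor $X\to J$, and the maximal Albanese dimension lets us upgrade this to a trivialization over an \'etale cover. For isotriviality, note that every fiber being a smooth genus-one curve makes the $j$-invariant into a morphism $j\colon C\to\A^1$; since $C$ is projective and $\A^1$ affine, $j$ is constant, so over the algebraically closed ground field every fiber is isomorphic to a fixed elliptic curve $F$.

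Next I would exploit $L$. Let $J:=\Pic^0_{X/C}$ be the associated Jacobian fibration, with zero section $0_J\colon C\to J$, and define a $C$-morphism $\phi\colon X\to J$ by $x\mapsto[\O_{F_c}(dx)]\otimes L^{-1}|_{F_c}$. On each fiber this is the multiplication-by-$d$ isogeny up to a translation, so since $d$ is coprime to the characteristic it is \'etale of degree $d^2$; by the fiberwise criterion of \'etaleness applied to the smooth $C$-schemes $X$ and $J$, the morphism $\phi$ is globally \'etale, and the translation action of $J$ on the torsor $X$ exhibits $\phi$ as a $J[d]$-torsor. Setting $C':=\phi^{-1}(0_J(C))$ then gives a finite \'etale cover of $C$ of degree $d^2$, on which $J[d]$ acts freely and transitively on the fibers over $C$.

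The crux, which is also the main obstacle, is to trivialize $J$ as $F\times C$ compatibly with the $J[d]$-torsor structure, so that the final group $G$ lands inside $F[d]$ and acts only by translation on the $F$-factor rather than incorporating any $\aut(F,0)$-twist. Here I would invoke Lemma \ref{lemma_ueno}: the kernel $E_0$ of $\Alb(X)\to\jac(C)$ is an elliptic curve isogenous to $F$, and since $X\to\Alb(X)\to\jac(C)$ factors through $f$, each fiber of $f$ is sent into a translate of $E_0$. This produces, up to isogeny, a non-constant morphism $X\to E_0$ whose fiberwise restriction is an isogeny $F\to E_0$, and together with $f$ a finite \'etale morphism $X\to E_0\times C$. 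Combining this with the $J[d]$-torsor structure, a descent argument yields a morphism $C'\times F\to X$, $(c',f)\mapsto c'+f$, which is \'etale Galois with group $G\subset F[d]$ acting diagonally by translation and realizes the isomorphism $X\cong(C'\times F)/G$ fitting into the stated commutative diagram.

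Finally, when $q(X)=2$, Lemma \ref{lemma_ueno} forces $g(C)=1$, so both $C$ and its \'etale cover $C'$ are elliptic curves; hence $C'\times F$ is an Abelian surface, and $X$ is its quotient by a finite group of translations, which is therefore itself an Abelian variety.
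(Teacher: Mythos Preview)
The paper does not actually supply a proof of this proposition; it simply refers the reader to Corollary~1.7.11 of the author's thesis. So there is no in-paper argument to compare against, and your attempt must be judged on its own merits.

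Your overall architecture is sound: the $j$-invariant argument for isotriviality is correct, and the construction of $\phi\colon X\to J=\Pic^0_{X/C}$ as a $J[d]$-torsor (using that $d$ is prime to the characteristic) is the right mechanism for producing the \'etale cover $C'$ and forcing $G\subseteq F[d]$. The final paragraph on the $q(X)=2$ case is also fine.

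The genuine gap is in your ``crux'' paragraph. You assert that the Albanese map yields a finite \emph{\'etale} morphism $X\to E_0\times C$, but nothing guarantees this: the fibrewise isogeny $F\to E_0$ obtained from $\alb_X$ has some degree $n$ which you do not control, and if $p\mid n$ the map is inseparable. The subsequent ``descent argument'' is then left entirely unspecified. What you actually need at this point is the following cleaner statement: the Jacobian fibration $J\to C$ is isotrivial with section, hence of the form $(\widetilde C\times F)/H$ with $H$ acting on $F$ through $\aut(F,0)$; but any nontrivial action through $\aut(F,0)$ (which always contains $-1$) kills the contribution of $F$ to the Albanese of $J$, forcing $q(J)=g(C)$ rather than $g(C)+1$. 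Since $\phi\colon X\to J$ is finite \'etale, $X$ and $J$ have the same Albanese dimension, so the maximal Albanese dimension hypothesis on $X$ forces $H$ to act trivially on $F$ and hence $J\cong F\times C$. Once $J$ is a product, $J[d]\cong F[d]\times C$ is constant, $C'=\phi^{-1}(0_J)$ is an $F[d]$-torsor over $C$ whose connected components are $G$-Galois for some $G\subseteq F[d]$, and the base change $X\times_C C'\to F\times C'$ becomes the pullback of $[d]\colon F\to F$ along the projection, hence $X\times_C C'\cong F\times C'$ with $G$ acting diagonally by translation. Filling in this step would make your argument complete.
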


We recall also the following result on elliptic surfaces.

\begin{lemma}[\cite{Mukai} Proposition 3.3]
\label{lemma_kodvan}
Let $f\colon X\to C$ be an elliptic surface.
Then Kodaira vanishing Theorem holds, i.e. for every ample line bundle $L$ we have that $h^1(X,L^{-1})=0$.
\end{lemma}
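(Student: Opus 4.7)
The plan is to combine the Leray spectral sequence for the elliptic fibration $f\colon X \to C$ with the defining feature $K_X \cdot F = 0$ of an elliptic fibration. Since $C$ is one-dimensional, the spectral sequence produces the short exact sequence
\begin{equation*}
0 \to H^1(C, f_* L^{-1}) \to H^1(X, L^{-1}) \to H^0(C, R^1 f_* L^{-1}) \to 0,
\end{equation*}
so it suffices to establish that both $f_* L^{-1} = 0$ and $H^0(C, R^1 f_* L^{-1}) = 0$.

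The first vanishing is immediate: since $L$ is ample, $L \cdot F > 0$ on every fibre $F$, hence $L^{-1}|_F$ has negative degree on the underlying reduced fibre and $H^0(F, L^{-1}|_F) = 0$ uniformly. As $f_* L^{-1}$ is a torsion-free coherent sheaf on the smooth curve $C$ whose generic stalk vanishes, it must be zero.

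For the second vanishing I would apply relative Serre duality to the flat Gorenstein morphism $f$: by Kodaira's classification the fibres are Gorenstein of arithmetic genus one, so the relative dualizing sheaf $\omega_{X/C}$ exists and the duality theorem yields an isomorphism $R^1 f_* L^{-1} \simeq \bigl(f_*(L \otimes \omega_{X/C})\bigr)^\vee$. The crucial observation is that $K_X \cdot F = 0$ by adjunction on the fibre, which gives $\omega_{X/C} \cdot F = 0$ as well, so $V := f_*(L \otimes \omega_{X/C})$ is a locally free sheaf on $C$ of the same positive rank $r = L \cdot F$, and the problem is reduced to showing that $\Hom_{\O_C}(V, \O_C) = 0$.

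This last positivity statement for $V$ is where the main obstacle lies. One cannot simply invoke Serre vanishing as $L$ may be arbitrarily small, and the tautological Grothendieck-duality manipulation $\Hom_C(V, \O_C) \simeq H^1(X, L^{-1})$ is circular. Instead, following Mukai, one establishes a Fujita-type semipositivity for $V$, typically by reducing via a finite base change to a smooth elliptic fibration without multiple fibres — where $\omega_{X/C}$ pulls back from a line bundle of non-negative degree on the base — and then performing a direct slope computation using the ampleness of $L$. In positive characteristic the presence of (possibly wild) multiple fibres is the principal source of technical difficulty, and is what necessitates the careful fibre-by-fibre treatment of \cite{Mukai}.
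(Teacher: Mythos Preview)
The paper gives no proof of this lemma at all: it is simply stated with a citation to \cite{Mukai}, Proposition~3.3. There is therefore nothing in the paper to compare your argument against beyond the reference itself.

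Your outline is correct as far as it goes, and it is the natural strategy. The reduction via the Leray spectral sequence to the vanishing of $f_*L^{-1}$ and of $H^0(C,R^1f_*L^{-1})$ is standard, and your treatment of $f_*L^{-1}=0$ is fine. The identification $R^1f_*L^{-1}\simeq\bigl(f_*(L\otimes\omega_{X/C})\bigr)^\vee$ via relative duality, together with the observation that $V=f_*(L\otimes\omega_{X/C})$ is locally free of rank $L\cdot F$, is also correct.

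However, your proposal is not a proof: you yourself flag that the crux --- showing $\Hom_{\O_C}(V,\O_C)=0$, equivalently that $V$ admits no quotient line bundle of non-positive degree --- is left to the Fujita--Mukai semipositivity machinery, and you defer to \cite{Mukai} for it. So what you have written is an informative unpacking of the citation rather than an independent argument; the actual content of the lemma (the positivity of $V$ in positive characteristic, including over multiple and possibly wild fibres) is exactly what Mukai supplies and what you do not. In that sense your write-up goes further than the paper, which gives only the bare reference, but it stops at the same point.
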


\subsection{Double covers}
Now we recall some facts about double covers of surfaces.
If the characteristic of the ground field is different from two, the theory is classical and the reader may refer to \cite{barth}.
Over fields of characteristic equal to $2$ the theory is more complicated and references are \cite{cossec} section 0.1 and \cite{gusunzhou} section 6 in the general setting and \cite{miyapet},  \cite{ekeins}, \cite{ekecan} and \cite{rudakov} for the case of inseparable double cover given by foliations and rational vector fields.
In Chapter 2 of \cite{contitesi} the author has collected all the results about double cover and explained in detail the connections between them.

Let $f\colon X\to Y$ be a flat double cover from a normal surface $X$ to a smooth surface $Y$.
There exists an affine covering $\{V_i\}$ of $Y$, for which $U_i=f^{-1}(V_i)$ is still affine and the induced morphism on regular functions makes $B_i=\O_X(U_i)$ an $A_i=\O_Y(V_i)$-module generated by two elements.

By the miracle flatness Theorem $B=f_*\O_X$ is a locally free $\O_Y$-algebra of rank two and $X\simeq\specu(B)$, where $\specu(B)$ is the relative spectrum of the sheaf of algebras $B$.

Up to a refinement of the affine covering $\{V_i\}$, we have that
\begin{equation}
B_i=A_i[t_i]/(t_i^2+a_it_i+b_i),
\label{eq_doublecov}
\end{equation}
where $a_i, b_i\in A_i$.
Clearly two generators of $B_i$ as an $A_i$-module are $1$ and $t_i$. 
Changing coordinates, the glueing conditions give us that 
\begin{equation}
t_i=g_{ij}t_j+c_{ij},
\label{eq_ti}
\end{equation}
with $g_{ij}\in A_{ij}^*=\O_{Y}(V_{ij})^*$ and $c_{ij}\in A_{ij}=\O_Y(V_{ij})$.
This shows that $f_*\O_X$ fits into the following short exact sequence
\begin{equation}
0\to\O_Y\to f_*\O_X\to L^{-1}\to 0
\label{eq_seqdouble}
\end{equation}
where the transition functions of the sections of $L$ are $g_{ij}$ and the transition functions of the sections $f_*\O_X$ are the matrices
\begin{equation}
\begin{pmatrix}
	1 & -g_{ij}^{-1}c_{ij}\\
	0 & g_{ij}^{-1}
\end{pmatrix}
\label{eq_transitions}
\end{equation}
(indeed if the basis of a free module transforms via a matrix $A$, then the sections expressed in that basis transform via the transpose of the inverse matrix $^tA^{-1}$).
Notice that $c_{ij}$ is the cocycle associated with the short exact sequence \ref{eq_transitions}, i.e. it splits if and only if $c_{ij}$ is cohomologous to zero.
The transition functions of $a_i$ and $b_i$ are (cf. \cite{contitesi} Equations 2.7 and 2.8)
\begin{equation}
\label{eq_ai}
a_i=g_{ij}a_j-2c_{ij}
\end{equation}
and
\begin{equation}
\label{eq_bi}
b_i=g_{ij}^2b_j-a_jg_{ij}c_{ij}+c_{ij}^2.
\end{equation}
\begin{definition}
The line bundle $L$ appearing in the exact sequence \ref{eq_seqdouble} is called the line bundle associated with the double cover $f\colon X\to Y$.
\end{definition}

The formula for the canonical divisor of $X$ is the following (cf. \cite{cossec} Proposition 0.1.3):
\begin{equation}
K_X=f^*(K_Y+L).
\label{eq_candoubcov}
\end{equation}

If the characteristic of the ground field is different from $2$, replacing $t_i$ by $t_i+\frac{1}{2}a_i$, we may assume that $a_i=0$, in particular (cf. Equation \ref{eq_ai}) $c_{ij}=0$ and the sequence \ref{eq_seqdouble} splits.
This means that 
\begin{equation}
f_*\O_X=\O_Y\oplus L^{-1}.
\label{eq_splitdouble}
\end{equation}

Equation \ref{eq_bi} becomes
\begin{equation}
b_i=g_{ij}^2b_j,
\label{eq_bisplit}
\end{equation}
in particular the $b_i$'s glue to a section $b$ of $L^2$ which defines a divisor $R$: the branch divisor of $f$.
In particular we obtain the classical result that, over an algebraically closed field of characteristic different from $2$, a double cover $f\colon X\to Y$ is uniquely determined by $L^{\otimes2}=\O_Y(R)$ where $R$ is the branch divisor of $f$.
It is also clear by the local equation $t_i^2+b_i=0$ that $X$ is normal if and only if the branch divisor $R$ is reduced and is smooth if and only if $R$ is.

If the characteristic of the ground field is $2$, Equations \ref{eq_ai} and equations \ref{eq_bi} become
\begin{equation}
a_i=g_{ij}a_j
\label{eq_ai2}
\end{equation}
and
\begin{equation}
b_i=g_{ij}^2b_j+a_jg_{ij}c_{ij}+c_{ij}^2.
\label{eq_bi2}
\end{equation}

It is then clear that a double cover $f\colon X\to Y$ in characteristic $2$ is uniquely determined by a line bundle $L$, a section $a\in H^0(Y,L)$ (which may be trivial) and local section  $b_i$ which satisfies Equation \ref{eq_bi2}.

\begin{definition}
\label{def_split2}
A double cover in characteristic $2$ is said to be splittable if the exact sequence \ref{eq_seqdouble} splits.
\end{definition}

\begin{proposition}[\cite{cossec} Proposition 0.1.2]
\label{propo_inssep}
A double cover in characteristic $2$ is separable if and only if the section $a$ of $L$ defined by equation \ref{eq_ai2} is not identically zero.
\end{proposition}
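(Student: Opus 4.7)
The plan is to check separability at the generic point using the explicit local presentation of the cover, and then translate the condition back to the gluing data of the section $a$.

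First I would reduce to a purely local statement: since $f\colon X\to Y$ is finite and flat of degree $2$, separability of $f$ is equivalent to separability of the induced extension of residue fields at the generic point of $Y$. On an affine chart $V_i$ with $B_i=A_i[t_i]/(t_i^2+a_it_i+b_i)$, this amounts to asking whether the quadratic polynomial $p_i(t)=t^2+a_it+b_i\in K(Y)[t]$ defines an étale $K(Y)$-algebra (after tensoring with $K(Y)$), i.e.\ whether $p_i$ is coprime to its derivative.

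Next, I would compute the derivative in characteristic $2$: $p_i'(t)=2t+a_i=a_i$. Hence $\gcd(p_i,p_i')\neq 1$ if and only if $a_i=0$ in $K(Y)$, so the cover is separable on $V_i$ exactly when $a_i\not\equiv 0$. Since $Y$ is irreducible and separability is a property at the generic point, this equivalence is independent of the chart chosen. Notice that in the case in which $X$ is reducible, $p_i$ factors as $(t-r)(t-s)$ with $r\neq s$, forcing $a_i=r+s\neq 0$, so this edge case is consistent with the statement as well.

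Finally, by the transformation rule \eqref{eq_ai2}, namely $a_i=g_{ij}a_j$, the local data $\{a_i\}$ defines a global section $a\in H^0(Y,L)$. Consequently $a\equiv 0$ on $Y$ if and only if $a_i=0$ for every $i$, if and only if $f$ is purely inseparable. This gives exactly the equivalence claimed in the proposition. The only delicate point is the check that separability can indeed be tested at the generic point of $Y$, but this follows from the fact that $f$ is finite flat, so the extension of structure sheaves is locally free of rank $2$ and inseparability at a single point of codimension $0$ forces inseparability of the whole cover.
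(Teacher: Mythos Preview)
Your argument is correct. The paper does not actually supply a proof of this proposition; it merely cites \cite{cossec} Proposition 0.1.2. So there is no in-paper argument to compare against, and your verification via the derivative $p_i'(t)=a_i$ at the generic point is exactly the standard (and intended) one. One minor remark: your aside about the reducible case is unnecessary here, since in the paper's setup $X$ is assumed normal, hence integral; but it does no harm.
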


\begin{definition}
\label{def_branch2}
If $f\colon X\to Y$ is a separable (splittable or not) double cover, we define the branch divisor $R$ to be the zero divisor of the section $a$ of $L$.
In this case we have $\O_Y(R)=L$.

If $f\colon X\to Y$ is an inseparable splittable double cover, we define the branch divisor $R$ to be the zero divisor of $b$ which is the section given by the glueing of the $b_i$'s. In this case we have
$L^{ 2}=\O_Y(R)$ as when the characteristic is different from two.
\end{definition}

\begin{remark}
Notice that a splittable double cover in characteristic $2$ is uniquely determined by a line bundle $L$ and two effective divisors $R_1$ and $R_2$ such that
\begin{equation}
L=\O_Y(R_1)
\label{eq_ram12}
\end{equation} 
and
\begin{equation}
L^{\otimes2}=\O_Y(R_2)
\label{eq_ram22}
\end{equation}
where $R_1$ coincides with the zero locus of the global section $a$ of $L$ while $R_2$ with the zero locus of the global section $b$ of $L^{\otimes2}$.
Moreover it is inseparable if and only if $R_1=0$.
\label{rem_splitdoublecover2}
\end{remark}

\begin{lemma}[cf. \cite{cossec} Remark 0.2.2]
\label{lemma_branchan}
Let $f\colon X\to Y$ be a separable double cover with branch divisor $R$.
Then the singular points of $X$ lying over smooth points of $R$ (if they exist) are rational singularities of type $A_n$.
In particular if $R$ is smooth, $X$ has at most rational double points as singularities.
\end{lemma}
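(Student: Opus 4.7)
The plan is to work locally around a point $p\in R$ where $R$ is smooth, since the statement is about the analytic/local type of each singularity. I split into two cases depending on the characteristic of the ground field.

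In characteristic different from $2$, the splitting (\ref{eq_splitdouble}) gives that the cover is locally defined by an equation $t^2=b$ with $\{b=0\}=R$ an equation of the branch divisor. If $R$ is smooth at $p$, then $b$ is part of a regular system of parameters, so we may write $b=x$ in suitable local coordinates $(x,y)$ on $Y$, and then $\spec(\O_{Y,p}[t]/(t^2-x))$ is regular. Hence no singular points lie over smooth points of $R$ in this case, and the claim is vacuously true.

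The essential case is characteristic $2$. Using Proposition \ref{propo_inssep} and Definition \ref{def_branch2}, a separable double cover is locally given by $t^2+at+b=0$, with the branch divisor $R$ cut out by $a$. When $R$ is smooth at $p$ I can pick local coordinates $(x,y)$ so that $a=x$; the local equation becomes $t^2+xt+b(x,y)=0$. The Tschirnhaus substitution $t\mapsto t+\lambda(x,y)$ changes $b$ to $b+\lambda^2+x\lambda$, so by iterating I may subtract from $b$ any element of the ideal $(x)+(\text{squares})$. The Jacobian criterion forces any singular point above $p$ to sit over $\{x=0\}$ with $b_y(0,y)=0$, i.e.\ the restriction $b(0,y)$ is a power series in $y^2$; say the lowest odd-degree contribution to $b$ (after the allowed substitutions) is $y^{2n+1}$ or a unit multiple thereof, for some $n\geq 1$. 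In this way the local equation reduces to Artin's normal form $t^2+xt+y^{2n+1}=0$ (up to units), which is exactly the $A_{2n}$ rational double point in characteristic $2$ from Artin's classification; an analogous reduction with $b$ of the shape $x^{n+1}+\cdots$ produces $A_{2n-1}$. Either way the singularity is a rational double point of type $A_n$.

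The main obstacle is the bookkeeping of the reduction of $b$: in characteristic $2$ the ideal generated by $x$ and by squares is not the maximal ideal, and one has to be careful in matching the surviving monomials with Artin's canonical forms and verifying rationality. This is a purely local and well-documented computation (see \cite{cossec} 0.2 and the references \cite{miyapet,ekeins,ekecan,rudakov} collected in \cite{contitesi} Chapter 2), so I would quote Artin's classification of rational double points in positive characteristic rather than reprove it, and only carry out the reduction needed to bring the local equation of the double cover into Artin's standard form.
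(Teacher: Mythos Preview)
Your argument is correct and follows the same local-analytic route that the paper defers to \cite{contitesi} Lemma 2.1.17: in characteristic $\neq 2$ smoothness of $R$ forces smoothness of $X$ over it, and in characteristic $2$ one normalizes $t^2+xt+b$ with $a=x$ via Tschirnhaus substitutions and coordinate changes until it matches one of Artin's $A_n$ normal forms. One small imprecision worth fixing: the substitution $t\mapsto t+\lambda$ alters $b$ by $\lambda^2+x\lambda$ for a \emph{single} $\lambda$, so the reachable set is $\{\lambda^2+x\lambda:\lambda\in\O_{Y,p}\}$ rather than all of $(x)+(\text{squares})$; this does not affect your argument, since for the crucial step of killing the even part of $b(0,y)$ it suffices to take $\lambda=\lambda(y)$, and the remaining reduction to Artin's list you already (rightly) delegate to the references.
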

\begin{proof}
For a detailed proof, see \cite{contitesi} Lemma 2.1.17.
\end{proof}

\begin{remark}
\label{rem_effdouble}
Notice that, except when the double cover is inseparable and non-splittable, we have that $L$ (or a multiple of $L$) is an effective divisor.
The presence of inseparable non-splittable double covers where neither $L$ nor one of its multiple is effective is the main reason why the proof we present here for Severi type inequalities works only over fields of characteristic different from $2$.
\end{remark}

\begin{remark}
\label{alb}
Suppose that we have a separable double cover $f\colon X\to Y$ with non-trivial smooth branch divisor $R$ in any characteristic.
Observe that if $\cha(k)=2$ it can happen that $X$ is singular even if the branch divisor is not (cf. Lemma \ref{lemma_branchan}): in this case by $\Alb(X)$ we mean the Albanese variety of a smooth model of $X$.
Then, if $q(X)=q(Y)$, it follows that $\alb_{f}\colon \Alb(X)\to\Alb(Y)$ is an isomorphism. 

Because the morphism is separable, there exists an involution $\sigma\colon X\to X$ such that $Y=X/\sigma$.
The hypothesis on the ramification tells us that there exists a point $x\in X$ fixed by the action of $\sigma$ which, up to a translation on the Albanese variety, is sent to $0$ by $\alb_X$.
By the universal property of the Albanese variety, we know that $\sigma$ extends to an involution $\alb_\sigma$ on $\Alb(X)$ (which preserves the group structure) such that  the following Cartesian diagram commutes:
\begin{equation}
\begin{tikzcd}
X\arrow{d}{\alb_X}\arrow{r}{\sigma}\arrow[bend left=30]{rr}{f} & X\arrow{d}{\alb_X}\arrow{r}{f} & Y\arrow{d}{\alb_Y}\\
\Alb(X) \arrow{r}{\alb_\sigma}\arrow[bend right=30]{rr}{\alb_f} & \Alb(X)\arrow{r}{\alb_f} & \Alb(Y).
\end{tikzcd}
\label{eq_commalbq=q}
\end{equation}
From $\alb_f\circ\alb_\sigma=\alb_f$ we see that $\alb_f\circ(\alb_\sigma-Id)=0$, from which we derive $\alb_\sigma=Id$ because $\alb_f$ is an isogeny.
The commutative  diagram \ref{eq_commalbq=q}, shows that there exists a morphism $\alpha\colon Y\to \Alb(X)$ such that $\alb_f\circ\alpha=\alb_Y$ from which we can derive $\Alb(X)=\Alb(Y)$.
\end{remark}

It's time to recall double covers associated with 1-foliations and rational vector fields over algebraically closed fields of characteristic $2$ and their relations with double covers presented so far.

\begin{definition}
Let $Y$ be a smoooth surface; a 1-foliationis is a saturated subsheaf $\F$ of the tangent sheaf $\T_{Y}$ which is involutive (i.e. $[\F,\F]\subseteq\F$ where $[\cdot,\cdot]$ denotes the Lie bracket on the tangent space) and 2-closed (i.e. $\F^2\subseteq \F$, which means that the square of any local derivative of $\F$, which is again a local derivative in characteristic two, is again in $\F$). 
It is called smooth if it is a subbundle of $\T_{Y}$. 
\label{def_fol}
\end{definition}

By \cite{hartref} Proposition 1.1 and \cite{hartref} Proposition 1.9, we know that every 1-foliation is a line bundle even if it is not a subbundle of the tangent sheaf $\T_Y$ of $Y$.

Observe that any 1-foliation $\F$ is a subbundle of the tangent sheaf $\T_Y$ (hence it is a smooth foliation) in codimension $1$ and fits in the following short exact sequence
\begin{equation}
0\to \F\to \T_Y\to I_Z L^2\to 0
\label{eq_foliationl}
\end{equation}
where $Z$ is a closed subscheme of codimension $2$, which is called the zero set of the 1-foliation, and $L^2$ has to be intended as a symbol for a line bundle (the exponent $2$ will be clear soon when we will relate the two notions of double covers).

A simple calculation with the Chern classes of these sheaves leads to
\begin{equation}
K_Y+\F=-2L
\label{eq:}
\end{equation}
and
\begin{equation}
\begin{split}
e(Y)=c_2(\T_Y)=&\deg(Z)+2L.\F=\\
=\deg(Z)-\F.(K_Y+\F)&=\deg(Z)-2L.(2L+K_Y).
\label{eq_eulfol}
\end{split}
\end{equation}

\begin{theorem}[cf. \cite{miyapet} Proposition 1.9 and Corollary 1.1 Lecture III or \cite{ekeins} Lemma 4.1 and Corollary 3.4]
\label{teo_doublefol}
There is a bijective correspondence between the set of 1-foliations of rank $1$ on $Y$ and the set of inseparable double covers $g\colon Y\rightarrow X$ where $X$ is normal and its singularities lie in the image of the zero set $Z$ of the foliation.

The correspondence is given as follows: to a double cover $g\colon Y\to X$ we associate the 1-foliation given by the relative tangent sheaf $\T_{Y/X}$, while to a 1-foliation $\F$ we associate the scheme $Y/\F$ which is topologically homeomorphic to $Y$ and whose functions are the functions of $Y$ on which $\F$ vanishes. 
Moreover the k-linear Frobenius morphism $F_k$ factors through $g$, that is the following diagram commutes:
\[
\begin{tikzcd}
Y \arrow{r}{g}\arrow{d}{F_k} & X\arrow{dl}{f}\\
Y^{(-1)}. &
\end{tikzcd}
\]

In addition we have $K_Y=g^*K_X+\F$.
\end{theorem}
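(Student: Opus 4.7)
The plan is to invoke the Jacobson correspondence from inseparable Galois theory, which for characteristic $p$ yields a bijection between $p$-closed, involutive $\O_Y$-submodules of the tangent sheaf $\T_Y$ and factorizations of the $k$-linear Frobenius $F_k\colon Y\to Y^{(-1)}$ through purely inseparable height-$1$ morphisms $g\colon Y\to X$. In characteristic $2$ with $\rk(\F)=1$, the resulting intermediate $X$ sits at degree $p=2$ below $Y$, so $g$ is an inseparable double cover in the sense already discussed.

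Given a $1$-foliation $\F$ on $Y$, I would define $X$ as the ringed space with the same underlying topological space as $Y$ and structure sheaf
\[
\O_X:=\bigl\{\,h\in\O_Y\;\big|\;D(h)=0\text{ for every local section }D\text{ of }\F\,\bigr\}.
\]
Involutivity and $2$-closedness guarantee that $\O_X$ is a sheaf of subrings of $\O_Y$, and a local computation in a frame trivializing $\F$ (available outside the zero set $Z$, where $\F$ is a subbundle of $\T_Y$) shows that $\O_Y$ is $\O_X$-locally free of rank $2$ there. The Frobenius factorization is automatic in characteristic $2$: for every derivation $D$ and every $h\in\O_Y$ one has $D(h^2)=2hD(h)=0$, so $h\mapsto h^2$ lands in $\O_X$, producing the required $f\colon X\to Y^{(-1)}$ with $F_k=f\circ g$. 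Since $Z$ has codimension $2$ and normality can be tested in codimension $1$, the normality of $X$ at points outside $g(Z)$ together with the finiteness of $g$ over a smooth $Y$ yields normality of $X$ globally, with singularities confined to $g(Z)$.

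In the opposite direction, given an inseparable double cover $g\colon Y\to X$ with $X$ normal, I would set $\F:=\T_{Y/X}$, the sheaf of $\O_X$-linear derivations of $\O_Y$. This is saturated (being the kernel of the map $\T_Y\to \mathcal{H}om_{\O_Y}(g^*\Omega_X,\O_Y)$ on the smooth locus and reflexive on a normal surface), involutive (the Lie bracket of two $\O_X$-linear derivations is again $\O_X$-linear), and $2$-closed (in characteristic $2$, $D^2$ is a derivation, and $\O_X$-linearity is preserved by squaring). Working on an affine patch where $\O_Y=\O_X[t]/(t^2+b)$ with $a=0$ by Proposition \ref{propo_inssep}, one writes $\F=\O_Y\cdot\partial_t$ and verifies that the two assignments $\F\mapsto Y/\F$ and $g\mapsto\T_{Y/X}$ are mutually inverse.

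For the canonical divisor identity $K_Y=g^*K_X+\F$, I would use that $g$ is finite and flat, whence $\omega_{Y/X}$ is an invertible sheaf satisfying $\omega_Y\cong g^*\omega_X\otimes\omega_{Y/X}$ by Grothendieck duality; the task reduces to identifying $\omega_{Y/X}\cong\F$. Dualizing the defining sequence \ref{eq_foliationl}, on the smooth locus of $\F$ the map $g^*\Omega_X\to\Omega_Y$ is an injective bundle map whose cokernel is the dual of $\F$, yielding the desired identification. The main obstacle will be the bookkeeping at points of the zero set $Z$, where $\F$ fails to be a subbundle of $\T_Y$ and $X$ may acquire singularities: one must verify both that the inverse correspondence extends across $Z$ and that the line-bundle isomorphism $\omega_{Y/X}\cong\F$ extends, which is ultimately guaranteed by reflexivity of rank-$1$ sheaves on the normal surface $X$ together with the codimension-$2$ nature of $Z$.
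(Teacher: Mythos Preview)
The paper does not give its own proof of this theorem: it is stated with citations to \cite{miyapet} and \cite{ekeins} and is used as a black box thereafter. Your sketch is essentially the standard argument found in those references (the Jacobson correspondence for height-one purely inseparable quotients, specialized to rank-one foliations in characteristic $2$), so there is nothing to compare against.

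One small caveat worth flagging: in your local computation you write $\O_Y=\O_X[t]/(t^2+b)$, but in this theorem the direction of the cover is $g\colon Y\to X$ with $Y$ smooth and $X$ only normal, so miracle flatness does not guarantee this presentation at singular points of $X$. You already acknowledge that all the delicate bookkeeping is concentrated over the zero set $Z$, and indeed this local model is only valid over the smooth locus of $X$; the extension to all of $X$ goes through reflexivity as you indicate.
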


Usually it is easier construct a 1-foliation starting with a rational vector field.

\begin{definition}
\label{def_ratvect}
A rational vector field on $Y$ is a rational section $s$ of the tangent space $\T_Y$, that is $s\in H^0(\T_Y\otimes k(Y))$ where $k(Y)$ is the field of rational functions of $Y$.

Two rational vector fields $s_1$ and $s_2$ are said to be equivalent if there exists a rational function $\alpha$ such that $s_1=\alpha s_2$.

A rational vector field is said to be 2-closed if $s^2=\lambda s$ for a constant  $\lambda\in k$ (where the multiplication is the composition of rational derivations): it is called multiplicative if $\lambda=1$, whereas it is called additive if $\lambda=0$.
\end{definition}

Notice that for every rational vector field $s$ we have $[s,s]=0$, in particular, with the notation used for 1-foliations, $s$ is always involutive and that, if $s^2=\lambda s$ with $0\neq\lambda \in k$, then $\sqrt{\lambda^{-1}}s$ is multiplicative.

\begin{proposition}[\cite{contitesi} Proposition 2.2.9]
\label{propo_folratvect}
There exists a bijective correspondence between the set of 1-foliations on a smooth surface $Y$ and the equivalence classes of rational 2-closed vector fields on $Y$.

The correspondence is given as follows: to a rational vector field $s\in H^0(\T_Y\otimes k(Y))$ we associate the 1-foliation $\O_Y(D_2-D_1)\to\T_Y$ where $D_1$ and $D_2$ are the divisorial part of the set of poles and zeroes of $s$; while to a 1-foliation $\F$ we can choose every rational section of $\F$ which, via the inclusion $\F\subseteq\T_Y$, induces a rational vector field.
\end{proposition}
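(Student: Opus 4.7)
I would establish the correspondence by constructing both directions explicitly and checking they are mutually inverse.

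For the direction from rational 2-closed vector fields to 1-foliations, given $s \in H^0(\T_Y \otimes k(Y))$ with $s^2 = \lambda s$, $\lambda \in k$, and divisor $(s) = D_2 - D_1$ (zeros minus poles, both effective), I would define $\F_s := \O_Y(D_2 - D_1) \hookrightarrow \T_Y$ by sending a local section $f$ to $fs$; this is well-defined because $fs \in \T_Y$ precisely when $(f) \geq D_1 - D_2$. Then (i) $\F_s$ is rank-one and saturated, being the intersection of the $k(Y)$-line spanned by $s$ with $\T_Y$; (ii) involutivity is automatic since $[f_1 s, f_2 s] = (f_1 s(f_2) - f_2 s(f_1)) s \in \F_s$; (iii) 2-closedness follows from the characteristic-two formula $(fs)^2 = f^2 s^2 + f s(f) s = (\lambda f^2 + f s(f)) s$, which keeps squares inside $\F_s$. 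Replacing $s$ by an equivalent $s' = \alpha s$, $\alpha \in k(Y)^*$, yields the same $k(Y)$-line in $\T_Y \otimes k(Y)$, hence $\F_{s'} = \F_s$.

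For the reverse direction, given a 1-foliation $\F \hookrightarrow \T_Y$ (which is a line bundle by the results already recalled from \cite{hartref}), any non-zero rational section of $\F$ induces a rational vector field via the inclusion, and two such sections differ by an element of $k(Y)^*$, so the equivalence class is determined by $\F$. The delicate point is to exhibit a representative that is 2-closed in the strong sense of Definition \ref{def_ratvect}. Starting from any rational section $s$, the 2-closedness of $\F$ at the generic point gives $s^2 = h s$ for some $h \in k(Y)$. Under the rescaling $s \mapsto \alpha s$, a direct computation yields $(\alpha s)^2 = (s(\alpha) + \alpha h)(\alpha s)$, so the invariant transforms as $h \mapsto \alpha h + s(\alpha)$. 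Finding $\alpha \in k(Y)^*$ such that $\alpha h + s(\alpha) = \lambda$ for a constant $\lambda \in k$ amounts to solving a first-order equation along the leaves of $\F$; once such $\alpha$ is produced, the rescaled section is 2-closed in the sense of Definition \ref{def_ratvect}. The two constructions are then mutually inverse: starting from $\F$, the saturated $k(Y)$-line of any rational section returns $\F$ itself; starting from $s$, the rational sections of $\F_s$ are precisely the rational multiples of $s$, recovering the equivalence class.

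\textbf{Main obstacle.} The central difficulty is the existence of a 2-closed representative in every equivalence class attached to a given 1-foliation, i.e., the rational solvability of $s(\alpha) + \alpha h = \lambda$ with $\lambda \in k$ constant. This step is not purely formal; I expect it to be handled via the geometry of the inseparable quotient $Y \to Y/\F$ supplied by Theorem \ref{teo_doublefol}, through which one can pull back suitable rational functions to $Y$ to produce the required $\alpha$. All the remaining checks are either automatic (involutivity and saturation at generic points) or reduce to straightforward local computations with the characteristic-two derivation formula.
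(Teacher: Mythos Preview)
The paper does not prove this proposition; it merely cites \cite{contitesi} Proposition 2.2.9. So there is no in-paper argument to compare against, and your proposal has to stand on its own.

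Your construction of the two maps and the checks that $\F_s$ is saturated, involutive, and $2$-closed are correct; likewise the observation that equivalent vector fields yield the same saturated line in $\T_Y$. The only genuine issue is the one you already isolate: producing, for a given $1$-foliation $\F$, a rational section $s$ with $s^2=\lambda s$ for a \emph{constant} $\lambda$. Your plan to solve $s(\alpha)+\alpha h=\lambda$ along the leaves is heading in the right direction but is more complicated than necessary. The clean way to close this gap uses exactly the quotient you mention, but at the level of function fields rather than by manufacturing $\alpha$: set $X=Y/\F$ as in Theorem \ref{teo_doublefol}; then $k(Y)/k(X)$ is purely inseparable of degree $2$, so for any $t\in k(Y)\setminus k(X)$ one has $t^2\in k(X)$ and $k(Y)=k(X)(t)$. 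The derivation $\partial/\partial t$ (zero on $k(X)$, $t\mapsto 1$) is a rational section of $\F=\T_{Y/X}$ and visibly satisfies $(\partial/\partial t)^2=0$, giving an additive representative outright. This removes the need to solve any first-order equation and completes your argument.
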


We would like to see how these two theories of inseparable double covers are related to one another and the way to do this is to consider the $k$-linear Frobenius morphism $F_k$.
Indeed it is known that every inseparable double cover between two schemes is a factor of the $k$-linear Frobenius morphism.
That is, let $f\colon X\to Y$ be an inseparable double cover between two surfaces $X$ and $Y$, then there exists a double cover $g\colon Y\to X^{(-1)}$ such that the following diagram commutes
\begin{equation}
\begin{tikzcd}
X\arrow{r}{f}\arrow[bend left=30]{rr}{F_k} & Y \arrow{r}{g} & X^{(-1)}.
\end{tikzcd}
\label{eq_facfrob}
\end{equation}

\begin{definition}
In the situation above, we define $f$ to be the dual inseparable double cover associated with $g$ and vice-versa. 
\end{definition}

Suppose that $X$ is normal and $Y$ is smooth: we have seen that $f$ is uniquely determined by a line bundle $L$ and local sections $b_i$ satisfying relations \ref{eq_bi2} and $g$ is uniquely determined by a 1-foliation $\F$. In \cite{ekecan} pages 105-106 it is proved that the relation between $\F$ and $L$ is
\begin{equation}
-2L=K_Y+\F
\label{eq_lf}
\end{equation}
which clarify the notation of Equation \ref{eq_foliationl}.

If we assume that $X$ is smooth too (and we can always reduce to this case after a finite number of blow-ups) we have the following sequence of morphisms 
\begin{equation}
Y^{(1)}\xrightarrow{g^{(1)}} X\xrightarrow{f} Y\xrightarrow{g} X^{(-1)}
\label{eq_frob1-1}
\end{equation}
where $f\circ g^{(1)}$ and $f\circ g$ are the relative Frobenius morphisms of $Y^{(1)}$ and $X$ respectively.
We can consider $f\colon X\to Y$ as a quotient via a 1-foliation $\G$ and the morphism $g^{(1)}\colon Y^{(1)}\to X$ as a double cover with associated line bundle $M$ . 
A simple calculation leads to the following.
\begin{lemma}
\label{lemma_flgm}
We have $f^* L=\G$, $f^*\F=2M$ and $-2M=K_X+\G$.
\end{lemma}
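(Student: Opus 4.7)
The three identities in Lemma \ref{lemma_flgm} should all follow by carefully comparing the two descriptions of each inseparable double cover in the sequence \ref{eq_frob1-1}: once as a factor of the Frobenius with an associated line bundle, and once as the quotient by a 1-foliation on the source.

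First I would derive $\G = f^*L$. The double cover $f\colon X \to Y$ has associated line bundle $L$ on $Y$, so by Equation \ref{eq_candoubcov} we have $K_X = f^*(K_Y+L)$. On the other hand, viewing $f$ as the quotient of $X$ by the 1-foliation $\G$ on $X$, Theorem \ref{teo_doublefol} gives $K_X = f^*K_Y + \G$. Subtracting these two expressions for $K_X$ yields $\G = f^*L$.

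Next I would obtain $-2M = K_X + \G$ by applying Equation \ref{eq_lf} to the double cover $g^{(1)}\colon Y^{(1)}\to X$. The dual inseparable double cover of $g^{(1)}$ (in the sense of the diagram \ref{eq_facfrob}) is precisely $f$, since $f\circ g^{(1)}$ is the relative Frobenius of $Y^{(1)}$. The 1-foliation on $X$ associated with $f$ is $\G$, and the line bundle on $X$ associated with $g^{(1)}$ is $M$, so the relation \ref{eq_lf} applied to this pair reads exactly $-2M = K_X + \G$.

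Finally, for $f^*\F = 2M$, I would pull back the identity $-2L = K_Y+\F$ (which is \ref{eq_lf} applied to the pair $(f,g)$) via $f^*$ to obtain
\begin{equation*}
-2f^*L = f^*K_Y + f^*\F.
\end{equation*}
Substituting $f^*L = \G$ from the first step and $f^*K_Y = K_X - \G$ from the canonical formula for $f$ as a $\G$-quotient, this rearranges to $f^*\F = -K_X - \G$, which equals $2M$ by the second step.

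None of these steps present a serious obstacle; the argument is essentially bookkeeping between the two dictionaries (line-bundle data versus foliation data) for inseparable double covers. The only thing to be careful about is identifying $f$ as the dual cover of $g^{(1)}$, i.e.\ checking that the roles of source and target match correctly so that \ref{eq_lf} can be reused verbatim on the second pair.
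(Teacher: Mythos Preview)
Your proposal is correct and follows essentially the same approach as the paper: compare the two expressions for $K_X$ to get $f^*L=\G$, apply Equation \ref{eq_lf} on $X$ to the pair $(g^{(1)},f)$ to get $-2M=K_X+\G$, and pull back Equation \ref{eq_lf} via $f$ to obtain $f^*\F=2M$. The only difference is that you spell out the identification of $f$ as the dual cover of $g^{(1)}$ more explicitly than the paper does.
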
 

\begin{proof}
We know that the canonical bundle of $X$ is isomorphic to $f^*K_Y+\G$ (Theorem \ref{teo_doublefol}) and to $f^*(K_Y+L)$ (Equation \ref{eq_candoubcov}) from which we have $f^* L=\G$.
Then, $-2M=K_X+\G$ is simply Equation \ref{eq_lf} rephrased on $X$.
Moreover, pulling back Equation \ref{eq_lf}, we get $f^*\F=-f^*K_Y-2f^*L=-K_X-\G=2M$.
\end{proof}

Let's now consider a smooth surface $X$ and its Albanese morphism $\alb_X\colon X\to\Alb(X)$. 
In addition, suppose that the dimension of the image of the Albanese morphism is two and that $\alb_X$ is inseparable.

\begin{lemma}
\label{lemma_albinsfact}
There is a canonical way to factor $\alb_X$ as
\begin{equation}
X\xrightarrow{f} Y\xrightarrow{h} \Alb(X),
\label{eq_factalbins}
\end{equation}
where $f$ is inseparable of degree two.
\end{lemma}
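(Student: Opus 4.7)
The plan is to construct $f$ by quotienting $X$ by a rank one $1$-foliation $\F\subseteq \T_X$ extracted from the differential of the Albanese morphism itself. First I examine the map $d\alb_X\colon \T_X \to \alb_X^*\T_{\Alb(X)}$: since $\alb_X$ is generically finite onto a two-dimensional image and inseparable, the function field extension $k(\Alb(X))\hookrightarrow k(X)$ is finite and inseparable, so $d\alb_X$ has positive-dimensional generic kernel. I let $\F$ be the saturation of $\ker d\alb_X$ inside $\T_X$, which on the smooth surface $X$ is a line bundle by \cite{hartref} Proposition 1.9.

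Next I verify that $\F$ is a $1$-foliation in the sense of Definition \ref{def_fol}. Involutivity is automatic for a rank one subsheaf, because $[gs,hs] = (g\,s(h)-h\,s(g))\,s \in \F$ for local sections $gs, hs$. For $2$-closedness, any local section $s$ of the relative tangent sheaf $\ker d\alb_X$ kills $\alb_X^{-1}\O_{\Alb(X)}$, hence so does $s\circ s$; therefore $s^2\in \ker d\alb_X\subseteq \F$, and the property extends to the saturation by a standard divisibility argument. Theorem \ref{teo_doublefol} then produces a normal surface $Y := X/\F$ together with an inseparable double cover $f\colon X\to Y$. The factorization through $Y$ is then immediate: regular functions pulled back from $\Alb(X)$ are killed by $\ker d\alb_X$ and hence by $\F$, so they descend to sections of $\O_Y$, producing a morphism $h\colon Y\to\Alb(X)$ with $\alb_X = h\circ f$. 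Since $\ker d\alb_X$ depends only on $\alb_X$, so does its saturation $\F$, and the construction is canonical.

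The main point of care is ensuring that the saturation $\F$ has generic rank exactly one, not two: if $\ker d\alb_X$ filled all of $\T_X$ generically then $\alb_X$ would factor through the absolute Frobenius of $X$ and no canonical rank one subfoliation would be singled out. I expect to handle this by showing, under the running hypotheses, that the inseparable degree of $\alb_X$ equals two, equivalently that $d\alb_X$ is not identically zero; this forces $\ker d\alb_X$ to have generic rank exactly one, makes the choice of $\F$ canonical, and yields a factor $f$ of degree precisely two via the correspondence of Theorem \ref{teo_doublefol}.
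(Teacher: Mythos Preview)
Your approach is essentially the paper's own: take the relative tangent sheaf $\T_{X/\Alb(X)}=\ker d\alb_X$, check it is a rank-one $1$-foliation, and quotient by it. The paper notes directly that $\T_{X/\Alb(X)}$ is torsion-free and \emph{already saturated} in $\T_X$ (as you can see: if $gt$ kills all pullbacks and $g\neq 0$, then $t$ does too), so your saturation step is harmless but unnecessary. Your verification of involutivity and $2$-closedness matches the paper's.

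The issue lies in your final paragraph. You correctly isolate the one nontrivial point---ruling out $\ker d\alb_X = \T_X$---but your proposed fix is off. The two conditions ``$d\alb_X$ is not identically zero'' and ``the inseparable degree of $\alb_X$ equals two'' are \emph{not} equivalent: the former is strictly weaker. A composite of two inseparable degree-$2$ maps has inseparable degree $4$, yet its differential can perfectly well have rank-one kernel. What you actually need, and all you need, is the weaker statement. The clean way to get it is Igusa's theorem (which the paper invokes in the remark immediately following the lemma): the pullback $\alb_X^*\colon H^0(\Alb(X),\Omega^1)\to H^0(X,\Omega^1)$ is injective, so the image of $\alb_X^*\Omega^1_{\Alb(X)}\to\Omega^1_X$ contains a nonzero global section and hence has generic rank at least one. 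Dually, $\ker d\alb_X$ has generic rank at most one. You should replace the ``inseparable degree two'' claim with this appeal to Igusa; no further argument is needed, and in particular there is no need to control the full inseparable degree of $\alb_X$.
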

\begin{proof}
Because $\alb_X$ is inseparable we have that the image of $\alb_X^*\Omega^1_{\Alb(X)}\to\Omega^1_X$ or, equivalently, the relative tangent sheaf $\T_{X/\Alb(X)}$ has rank $1$ (indeed, because the morphism $\alb_X$ is inseparable, it can not be of maximal rank: cf. \cite{liu} Lemma 6.1.13(b)).
We want to show that $\T_{X/\Alb(X)}$ is a 1-foliation: this will give rise to the desired morphism $f\colon X\to Y$.
Using the short exact sequence of the relative tangent bundle
\begin{equation}
0\to\T_{X/\Alb(X)}\to\T_X\to \alb_X^*\T_{\Alb(X)},
\label{eq_shorttan}
\end{equation}
we have that $\T_{X/\Alb(X)}$ is torsion-free and saturated: in particular it is a line bundle (again by \cite{hartref} Proposition 1.1 and \cite{hartref} Proposition 1.9).
It is also clear that $\T_{X/\Alb(X)}$ is involutive because it has rank one.
The short exact sequence also shows that $\T_{X/\Alb(X)}$ is the sheaf of local sections of $\T_X$ which vanishes on the pull-back of local functions of $\Alb(X)$: this is clearly a condition which is preserved by taking squares, in particular $\T_{X/\Alb(X)}$ is 2-closed and hence a 1-foliation.
It is then clear then that $\alb_X$ factorizes as $X\xrightarrow{f}Y=X/\T_{X/\Alb(X)}\xrightarrow{g}\Alb(X)$.
\end{proof}

\begin{remark}
\label{rem_albinsfact}
In the same setting as above denote by $\widetilde{M}=\im(\alb_X^*\Omega^1_{\Alb(X)}\to\Omega^1_X)$ and by $\overline{M}$ its saturation inside $\Omega^1_X$.
We have the short exact sequence
\begin{equation}
0\to \overline{M}\to \Omega^1_X \to \Omega^1_{X/\Alb(X)}/T(\Omega^1_{X/\Alb(X)})\to 0
\label{eq_shortcantanfol}
\end{equation}
where $T(\Omega^1_{X/\Alb(X)})$ is the torsion part of $\Omega^1_{X/\Alb(X)})$.
Notice that the dual of $\Omega^1_{X/\Alb(X)}/T(\Omega^1_{X/\Alb(X)})$ is equal to the relative tangent sheaf $\T_{X/\Alb(X)}$ (the operation of taking the dual ignores the torsion subsheaf): hence dualizing Equation \ref{eq_shortcantanfol} we get
\begin{equation}
0\to \T_{X/\Alb(X)}\to \T_X \to I_Z\overline{M}^{-1}\to 0,
\label{eq_shortcantanfoldual}
\end{equation}
where $Z$ is the subscheme of zeroes of the foliation $\T_{X/\Alb(X)}$ or, equivalently, the subscheme where $\overline{M}$ is not a subbundle of $\Omega^1_X$.

A Theorem of Igusa \cite{igusa} ensure that the pull-back of global 1-forms via the Albanese morphism is injective, in particular $h^0(\overline{M})\geq q$ where $q$ is the dimension of the Albanese variety.
Let $M=(-K_X-\T_{X/\Alb(X)})/2=-\overline{M}/2$: we have seen that this is the line bundle associated with the double cover $g^{(1)}\colon Y^{(1)}\to X$ (cf. Equation \ref{eq_lf} and Lemma \ref{lemma_albinsfact}) and the square of its inverse has at least $q$ global sections.
This proves that the double cover $g^{(1)}$ is non-splittable (otherwise we would have that $M$ is effective up to a multiple thanks to Remark \ref{rem_effdouble}).
\end{remark}

A classical (at least over fields of characteristic different from $2$) way to solve singularities of a double cover of surfaces $\pi\colon X\to Y$ branched over $R$ is the canonical resolution. We treat separadetely the case of characteristic different from $2$ and the case equal to $2$.

In the first case we have the following diagram:
\begin{equation}
\label{eq_canres}
\begin{tikzcd}
\widetilde{X}=X_t \arrow{d}{f_t}\arrow{r}{\phi_t} & X_{t-1}\arrow{d}{f_{t-1}}\arrow{r}{\phi_{t-1}} & \ldots \arrow{r}{\phi_2} & X_1\arrow{d}{f_1} \arrow{r}{\phi_1} & X \arrow{d}{f}\\
Y_t \arrow{r}{\psi_t} & Y_{t-1}\arrow{r}{\psi_{t-1}} & \ldots \arrow{r}{\psi_2} & Y_1 \arrow{r}{\psi_1} & Y,
\end{tikzcd}
\end{equation}
where the $\psi_i$ are successive blow-ups that resolve the singularities of the branch divisor $R$, the morphism $f_i$ is the double cover branched over $R_i=\psi_i^*R_{i-1}-2m_iE_i$ (equivalently $L_i=\psi_i^*L_{i-1}(-m_iE_i)$), where $E_i$ is the exceptional divisor of $\psi_i$, $m_i=\intinf{d_i/2}$ with $d_i$ the multiplicity in $R_{i-1}$ of the blown-up point and $\intinf{d_i/2}$ denotes the integral part of $d_i/2$. 
One has the following relations (cf. \cite{barth} V.22):
\begin{equation}
K_{\widetilde{X}}^2=2K_Y^2+4K_Y.L+2L^2-2\sum_{i=1}^{t}(m_i-1)^2,
\label{cansq}
\end{equation}
and
\begin{equation}
\chi(\O_{\widetilde{X}})=2\chi(\O_Y)+\frac{1}{2}K_Y.L+\frac{1}{2}L^2-\frac{1}{2}\sum_{i=1}^{t}m_i(m_i-1).
\label{eulchar}
\end{equation}

\begin{definition}
\label{def_cansing}
The singularities of the branch locus $R$ are said to be negligible if $m_i=1$ (or, equivalently, $d_i=2,3$) for all $i=1,\ldots,t$.
\end{definition}

\begin{remark}
\label{rem_cansing}
If the branch divisor $R$ has at most negligible singularities, then  $K_{\widetilde{X}}=(f_t\circ\psi_t\circ\ldots\circ\psi_1)^*(K_Y+L)$ (\cite{barth} Theorem III.7.2).
If we further assume that $\widetilde{X}$ is of general type and that $Y$ contains no rational curves, we have that $\widetilde{X}$ is minimal (in general it can have exceptional divisors even if $Y$ contains no rational curves) and $X$ is its canonical model (ibidem III.7 table 1).
\end{remark}

When the characteristic of the ground field is equal to $2$, we refer to \cite{gusunzhou} Section 6.
The good thing is that we obtain the same diagram  in \ref{eq_canres} as a canonical resolution and the same equations hold for the square of the canonical divisor and the Euler characteristic of the structure sheaf as in equations \ref{cansq} and \ref{eulchar}; the bad thing is that we have not a characterization of the $m_i$'s.

Let $f\colon X\to Y$ be a double cover with $Y$ smooth and $X$ normal, $q\in Y$ be a point such that the (unique) point $p$ lying over it is singular in $X$ and $\psi\colon Y'\to Y$ be the blow-up of $q$.
It is clear that the short exact sequence which determines the double cover $\overline{X}=Y'\times_Y X\to Y'$ is the pull-back via $\psi$ of the short exact sequence \ref{eq_seqdouble} associated with $f$ and the same is true for the local sections defining the double cover, i.e. $a_i'=\psi^* a_i$ and $b_i'=\psi^*b_i$.
This shows that the singularities of $\overline{X}$ (which are defined by equations depending on $a_i'$ and $b_i'$) lie exactly above those of $X$ (which are defined by equations depending on $a_i$ and $b_i$).
In particular we have that $\overline{X}$ has a divisorial singularity and can not be normal.
Let $\nu\colon X' \to\overline{X}$ the normalization  and consider the following diagram:
\begin{equation}
\begin{tikzcd}
X'\arrow[bend left=20]{drrr}{\phi}\arrow[bend right=20]{dddr}{f'}\arrow{dr}{\nu} & & &\\
 & \overline{X} \arrow[ddrr, phantom, "\square"]\arrow{rr}{\widetilde{\phi}}\arrow{dd}{\widetilde{f}} & & X\arrow{dd}{f}\\
 & & & \\
 & Y'\arrow{rr}{\psi} & & Y.
\end{tikzcd}
\label{eq_doublecvonorm}
\end{equation}  
We have the following relation between the two sequences defining the double covers $f'$ and $\widetilde{f}$:
\begin{equation}
\begin{tikzcd}
0 \arrow{r} & \O_{Y'}\arrow{r}{}\arrow[equal]{d}{} & \widetilde{f}_*\O_{\overline{X}}\arrow[hook]{d}{}\arrow{r}{} & (\psi^*L)^{-1}\arrow{r}{}\arrow[hook]{d}{} & 0\\
0 \arrow{r} & \O_{Y'}\arrow{r}{} & f'_*\O_{X'}\arrow{r}{} & (L')^{-1}\arrow{r}{} & 0,
\end{tikzcd}
\label{eq_definingequations}
\end{equation}
where the vertical injections follow from the fact that $X'$ is the normalization of $\overline{X}$.
In particular this proves that $L'=\psi^*L(-mE)$ for a strictly positive integer $m$. 
Reiterating this process we obtain the canonical resolution: the fact that this process leads to a smooth $\widetilde{X}$ is proven in \cite{gusunzhou}   section 6.2 for the inseparable case and section 6.3 for the separable one.
The invariants of $X$ satisfy Equations \ref{cansq} and \ref{eulchar}.
Notice that in this case it is not clear a priori what are the values of the $m_i$'s.

\begin{remark}
\label{rem_mi2}
Observe that, if all the $m_i$'s are equal to $1$ and $\widetilde{X}$ is a surface of general type, all the singularities of $X$ are rational double points (we can derive this from Equation \ref{eulchar}). 
In particular, if $Y$ contains no rational curves, then $\widetilde{X}$ is minimal and $X$ is its canonical model.
\end{remark}

We want to focus a little bit more on the canonical resolution of inseparable double covers.
As we have already seen, for every inseparable double cover $f\colon X\to Y$ there is a canonically defined inseparable double cover $g\colon Y\to X^{(-1)}$.
Fitting this in the canonical resolution we get:
\begin{equation}
\label{eq_canresins}
\begin{tikzcd}
\widetilde{X}=X_t \arrow{d}{f_t}\arrow{r}{\phi_t} & X_{t-1}\arrow{d}{f_{t-1}}\arrow{r}{\phi_{t-1}} & \ldots \arrow{r}{\phi_2} & X_1\arrow{d}{f_1} \arrow{r}{\phi_1} & X \arrow{d}{f}\\
Y_t \arrow{d}{g_t}\arrow{r}{\psi_t} & Y_{t-1}\arrow{r}{\psi_{t-1}}\arrow{d}{g_{t-1}} & \ldots \arrow{r}{\psi_2} & Y_1 \arrow{r}{\psi_1}\arrow{d}{g_1} & Y\arrow{d}{g}\\
X_t^{(-1)} \arrow{r}{\phi_t^{(-1)}} & X_{t-1}^{(-1)}\arrow{r}{\phi_{t-1}^{(-1)}} & \ldots \arrow{r}{\phi_2^{(-1)}} & X_1^{(-1)} \arrow{r}{\phi_1^{(-1)}} & X^{(-1)},
\end{tikzcd}
\end{equation}
where the $\psi_i$ are successive blow-ups among the points of $Y_{i-1}$ dominating singular points of $X_{i-1}^{(-1)}$, the morphism $f_i$ is the double cover defined by the short exact sequence $0\to \O_{Y_i}\to V_i \to L_i^{-1}\to 0$ with $L_i=\psi_i^*L_{i-1}(-m_iE_i)$), where $E_i$ is the exceptional divisor of $\psi_i$ and $\psi_i^*(f_{i-1})_*\O_{X_{i-1}}\subseteq V_i=(f_i)_*\O_{X_i}$; similarly the $g_i$ are defined by the 1-foliation $\F_i=-2L_i-K_{Y_i}$ (cf. Equation \ref{eq_lf}), in particular $\F_i=\psi_i^*\F_{i-1}\bigl((2m_i-1)E\bigr)$. 

Denoting the by $Z_i$ the zero set of the 1-foliation $\F_i$, thanks to Equation \ref{eq_eulfol}, we get that the degree $d_i$ of $Z_i$ is equal to $e(Y_i)+2L_i.(2L_i+K_{Y_i})$.
In particular one has
\begin{equation}
\begin{split}
d_i=e(Y_i)+&2L_i.(2L_i+K_{Y_i})=\\
=e(Y_{i-1}&)+1+\\
+2\psi_i^*L_{i-1}(-m_iE).\Bigl(2\psi_i^*&L_{i-1}(-m_iE)+\psi_i^*K_{Y_{i-1}}(E)\Bigr)=\\
=e(Y_{i-1})+2L_{i-1}.(2L_{i-1}&+K_{Y_{i-1}})-4m_i^2+2m_i+1=\\
=d_{i-1}-4&m_i^2+2m_i+1.
\end{split}
\label{eq_degzi}
\end{equation}
This equation is used in \cite{gusunzhou} to prove that the canonical resolution process terminates after a finite number of blow-ups, and that's because the degree of $Z_i$ has to be non-negative and $X_i$ is smooth as soon as $d_i=0$.

\begin{remark}
Notice that Equation \ref{eq_degzi} implies that if the zero set of the foliation has only points with multiplicity at most $10$, then all the $m_i$'s appearing in the canonical resolution process have to be equal to $1$. 
Indeed if it were not the case assume, for simplicity, $m_1\geq 2$. 
Then we see that  $d_1=\deg(Z)-4m_1^2+2m_1+1\leq \deg(Z)-11$: observe that $Z_1$ coincides with $\psi_1^*Z$ outside of $E_1$ and we get a contradiction.
As far as we know the opposite may not be true: a priori it may happen that $Z$ has points with multiplicity bigger then $10$ and $X$ has only rational singularities.
\label{rem_atmost10}
\end{remark}

We recall the following useful result for the algebraic fundamental group of separable double covers ramified over an ample divisor and inseparable double covers.

\begin{theorem}[cf. \cite{contitesi} Theorem 2.5.3, Corollary 2.5.4 and Remark 2.5.5]
Let $f\colon X\to Y$ be a separable  double cover branched over an ample divisor or an inseparable double cover and let 
\begin{equation}
\begin{tikzcd}
\widetilde{X}\arrow{d}{\widetilde{f}}\arrow{r}{\phi} & X\arrow{d}{f}\\
\widetilde{Y}\arrow{r}{\psi} & Y
\end{tikzcd}
\label{eq_quad}
\end{equation}
be the canonical resolution of $f\colon X\to Y$.
Suppose that all the $m_i$'s appearing are equal to $1$ or, equivalently, that $X$ has only rational double points (e.g. $X$ is the canonical model of $\widetilde{X}$ if $\widetilde{X}$ is a surface of general type).
Then $\phi$, $f$ and, consequently, $f\circ\phi$ induce an isomorphism on the algebraic fundamental groups.
\label{teo_funddoub}
\end{theorem}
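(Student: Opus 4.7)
The statement decomposes into two independent claims, namely that $\phi_*$ and $f_*$ are isomorphisms of algebraic fundamental groups; the corresponding statement for $f\circ\phi$ then follows by composition. The first claim is a local statement about the singularities of $X$: by hypothesis $m_i=1$ for every $i$ so, by Remarks \ref{rem_cansing} and \ref{rem_mi2}, $X$ has only rational double points and $\phi$ is their minimal resolution. Each exceptional component is a simply connected ADE configuration of rational curves, and a standard argument (a van Kampen-type computation on the \'etale site) shows that contracting simply connected curves on a smooth surface does not affect the algebraic fundamental group; this yields the first isomorphism.

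For the second claim I would treat the two geometric cases separately. If $f$ is purely inseparable then it is a factor of the $k$-linear Frobenius (cf.\ diagram \ref{eq_facfrob}), hence a universal homeomorphism: universal homeomorphisms induce equivalences of \'etale sites and therefore isomorphisms of algebraic fundamental groups. If instead $f$ is separable with ample branch $R$, then $f$ is \'etale outside $R$ and totally ramified of index two along $R$. Setting $U=Y\setminus R$ and $V=f^{-1}(U)$, the restriction $V\to U$ is a connected \'etale double cover and corresponds to an index-$2$ inclusion of algebraic fundamental groups. By Zariski--Nagata purity (applicable since $Y$ is smooth and $X$ has at worst rational double points) the algebraic fundamental groups of $Y$ and $X$ are obtained from those of $U$ and $V$ by quotienting out the normal closures of the inertia elements at $R$ and $f^{-1}(R)$ respectively, and because the ramification index is two an inertia generator at $f^{-1}(R)$ is the square of an inertia generator at $R$. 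A direct diagram chase---using that a loop with odd winding number around $R$ is not contained in the index-$2$ subgroup (giving surjectivity of $f_*$) and that every element of $\ker f_*$ has even winding number, hence is already a product of squared inertia elements (giving injectivity)---then finishes the argument.

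The principal obstacle I expect is the separable case in characteristic two, where the cover need not be tamely ramified (its local equation being of Artin--Schreier type) and the naive quotienting by tame inertia may fail; here the ampleness hypothesis on $R$ becomes essential, as it is what makes it possible to control the local-to-global extension of the \'etale site along the branch and to reduce the wildly ramified picture to the tame diagram chase above. For the inseparable case no such issue arises because the argument is purely topological via the Frobenius factorization, and for the resolution step the ADE classification provides the required simple-connectivity of the exceptional divisors unconditionally.
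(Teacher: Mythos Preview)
Your treatment of $\phi$ (simply connected ADE fibres, hence \'etale covers of $\widetilde{X}$ descend) and of the inseparable case (factor of Frobenius, hence universal homeomorphism) is correct. The gap is in the separable case: your injectivity argument is a non-sequitur, and you never actually use the ampleness of $R$, which you yourself flag as essential.

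The step ``every element of $\ker f_*$ has even winding number, hence is already a product of squared inertia elements'' does not follow. Lying in $\pi_1(V)$ (even winding) together with lying in the normal closure of the inertia generator $\gamma$ inside $\pi_1(U)$ does \emph{not} force membership in the normal closure of $\gamma^2$ taken inside the smaller group $\pi_1(V)$. Your diagram chase invokes nothing about $R$ beyond its being the branch locus, so it would apply verbatim when $R$ is not ample---but then the conclusion is false. Take $Y=E_1\times E_2$ a product of elliptic curves and $R=F_1+F_2$ two fibres of the projection to $E_1$ (non-ample, $R^2=0$, and $2$-divisible for a suitable choice of fibres): the double cover is $X=C\times E_2$ with $g(C)=2$, and $f_*$ is visibly not injective already on abelianisations.

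Ampleness has to enter through a genuinely global input, typically of Lefschetz type: the reduced preimage $R'=f^{-1}(R)_{\mathrm{red}}$ maps isomorphically to $R$, both are ample on their ambient surfaces, and the surjections $\pi_1(R)\twoheadrightarrow\pi_1(Y)$ and $\pi_1(R')\twoheadrightarrow\pi_1(X)$ let one compare the two fundamental groups through the same curve. Your closing paragraph locates the difficulty in wild ramification in characteristic two, but this is a red herring: the same gap is present in characteristic zero, where everything is tame. The paper itself gives no argument and defers to \cite{contitesi}, Section~2.5, so there is no proof here to compare against; but as written your proposal does not establish the separable case.
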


\begin{proof}
For a detailed proof of this we refer to \cite{contitesi} Section 2.5
\end{proof}

\section{Examples}
\label{sec_ex}

In this section we give explicit examples of surfaces which satisfy equalities in Theorems \ref{teo_mio3} and \ref{teo_mio4}, proving that all the inequalities are sharp and we also give examples of minimal surfaces of general type with maximal Albanese dimension for which $K_X^2=4\chi(\O_X)+4(q-2)$ holds over an algebraically closed field of characteristic equal to $2$. 
Over algebraically closde fields of characteristic different from $2$, the examples are the same as over the complex numbers.
For this reason we recall them briefly and we refer to \cite{conti} Section 3 for more details.

For the first part of this section we assume that $k$ is an algebraically closed field of characteristic differetn from $2$.
Now we give an an example of a surface satisfying  equality in Theorem \ref{teo_mio3} for $q\geq 3$ (the case $q=2$ is treated in \cite{barparsto} over the complex numbers and in \cite{gusunzhou} over any algebraically closed field).
\begin{example}[double cover of a product elliptic surface, cf. \cite{conti} Example 3.1]
\label{es_1} 
We consider an elliptic surface $Y_0=C\times E$ which is the product of an elliptic curve $E$ and a curve $C$ of genus $g>1$. 
  Let $\pi\colon X\to Y$ be the canonical resolution of the double cover $\pi_0\colon X_0\to Y_0$ associated with $R_0=2L_0$ where $R_0\sim_{hom}2C+2dE$ is a reduced with at most negligible singularities divisor (and it is cearly two-divisible).

Then we have that $X$ is a minimal surface of general type with maximal Albanese dimension with invariants: 
\begin{itemize}
	\item $q'(X)=q(Y_0)=g+1$ (this follows from Lemma \ref{lemma_kodvan} and $(\pi_0)_*\O_{X_0}=\O_{Y_0}\oplus L_0^{-1}$);
	\item $q=q(X)=q(Y_0)=g+1$ (cf. Lemma \ref{lemma_ueno}) in particular the Picard scheme of $X$ is reduced; 
	\item $K_X^2=8(q-2)+4d$ (cf. Equation \ref{cansq});
	\item $\chi(\O_X)=q-2+d$ (cf. Equation \ref{eulchar}).
\end{itemize}
Observe that, if $d>7(q-2)$, we have $K_X^2=4\chi(\O_X)+4(q-2)<\frac{9}{2}\chi(\O_X)$.

Hence we have proved that these surfaces satisfy the conditions of Theorem \ref{teo_mio3}: the next step is to prove that they are the only ones; this will be done in Section \ref{sec_severi}.
\end{example}

Now we give three examples of surfaces for which equality holds in Theorem \ref{teo_mio4}: in the first two cases we have $q(X)\geq 3$, while in the last example $q(X)=2$.

\begin{example}[cf. \cite{conti} Example 3.2]
The easiest possible case is a simple modification of Example \ref{es_1}. We take $Y_0$ as before: in this case we just need to take $R_0\sim_{hom}4C+2dE$ and everything is verified in a completely similar way (as before, we need $d> 7(q-2)$). In this case we have $q'(X)=q(X)=q=g+1, K_X^2=16(q-2)+8d$ and $\chi(\O_X)=2(q-2)+2d$. Hence $K_X^2-4\chi(\O_X)=8(q-2)$.
\label{es_2}
\end{example}

Before the next example, we recall some facts that will be useful. It is known that the Jacobian variety $\jac C'$ of a general curve $C'$ of genus $g'$ is simple (cf. Appendix B of \cite{moonen}). It is also known that, given a general \'etale double cover $C\to C'$ over an algebraically closed field of characteristic different from $2$, its Prym variety $P(C,C')$ is simple (cf. \cite{beauville-prym} or \cite{farkas} Section 3.1). By definition of Prym variety, $\jac C$ is isogenous to $\jac C'\times P(C,C')$. In particular there are no Abelian subvarieties of codimension $1$ of $\jac C$ if $g'>2$ (if $g'=2$, the dimension of $P(C,C')$ is $1$ , in particular $\jac C'$ is an Abelian subvariety of codimension $1$ of $\jac C$). So, for every elliptic curve $E$, the set $\Hom(C,E)$ contains only constant morphisms.

\begin{example}[Double cover of a non-trivial smooth elliptic surface, cf. \cite{conti} Example 3.3]
\label{es_3}
Here we present  an example of surface $X$ of general type satisfying equality in Theorem \ref{teo_mio4}, whose canonical model is a ramified double cover of an elliptic surface which is not a product. We start with $C'$, $C$ and $E$ as above.

Let $G$ be a subgroup of order $2$ of $E$ acting freely on $C$ such that the quotient $C/G$ is $C'$: this action clearly extends diagonally to the product giving a finite morphism of degree two $f\colon C\times E\to Y_0:=(C\times E)/G$. Proposition \ref{propo_exactpicce}, together with the non-existence of surjective morphisms from $C$ to $E$, show that there is no line bundle $L$ fixed by $G$ for which $L.E=1$. By Lemma \ref{lemma_trivialell} this is enough to prove that $Y_0$ is not a product. 

Let $L_C$ and $R_C$ be two line bundles on $C/G$ of degree respectively $d$ and $2d$  such that $2L_C=R_C$. Similarly, let $L_E$ and $R_E$ be two line bundles on $E/G$ of degree respectively $1$ and $2$ such that $2L_E=R_E$. If we take an element $R_0\in |\pi_{C/G}^*R_C+\pi_{E/G}^*R_E|$ with at most negligible singularities  and we denote by $L_0=L_C\boxtimes L_E$, we have $2L_0=\O_{Y_0}(R_0)$; hence we get a double cover $\pi_0\colon X_0\to Y_0$, and after the canonical resolution  we get a minimal smooth surface $X$ of general type with maximal Albanese dimension with invariants:
\begin{itemize}
	\item $q'(X)=q(Y_0)$ (this follows from Lemma \ref{lemma_kodvan} and $(\pi_0)_*\O_{X_0}=\O_{Y_0}\oplus L_0^{-1}$);
	\item $q=q(X)=q(Y_0)$ (cf. Lemma \ref{lemma_ueno}) in particular the Picard scheme of $X$ is reduced; 
	\item $K_X^2=16(q-2)+8d$ (cf. Equation \ref{cansq});
	\item $\chi(\O_X)=2(q-2)+2d$ (cf. Equation \ref{eulchar}).
\end{itemize}

In particular if $d>7(q-2)$ we have that $K_X^2=4\chi(\O_X)+4(q-2)<\frac{9}{2}\chi(\O_X)$.
\end{example}

\begin{example}[Double cover of an Abelian variety ramified over a divisor with a quadruple point, cf. \cite{conti} Example 3.4]
\label{ex_abe}
This is an example of surface of general type with maximal Albanese dimension whose Albanese morphism is composed with an involution with $q(X)=2$ satisfying equality in Theorem \ref{teo_mio4}, i.e.
\[K^2_X=4\chi(\O_X)+2.\]

Let $A$ be an Abelian surface and let $H$ be a very ample divisor. Take general elements $H_1, H_2, H_3, H_4, H_5, H_6, H_7, H_8$  inside the linear system  $|H|$ such that they are smooth, they all pass through a point $p$ and $H_i$ intersects $H_j$ for $i\neq j$ transversely at every intersection point. 
Denote by $H_{i,j}=(H_i\cap H_j)\setminus \{p\}$ for $1\leq i<j \leq 8$: we also require that $H_{i,j}\cap H_{i',j'}=\emptyset$ for every $(i,j)\neq(i',j')$. 
Let $D_1=H_1+H_2+H_3+H_4$ and $D_2=H_5+H_6+H_7+H_8$ and consider the pencil $P=\lambda D_1+\mu D_2$: the base locus of $P$ is $\{p\}\sqcup \bigsqcup_{1\leq i,j\leq 4} H_{i,j+4}$. By a Bertini-type argument, the generic element $R\in P$ is smooth away from $p$ and has a quadruple ordinary point at $p$. 

It is obvious that $R$ is two-divisible, i.e. there exists $L\in\Pic(A)$ with $2L=\O_A(R)$. Consider the double cover $\pi\colon X\to A$ branched over $R$, and take the canonical resolution $\widetilde{\pi}\colon\widetilde{X}\to\widetilde{A}$.
We have that $\widetilde{X}$ is a minimal smooth surface of general type with maximal Albanese dimension with invariants:
\begin{itemize}
	\item $q(\widetilde{X})=q'(\widetilde{X})=2$;
	\item $K_{\widetilde{X}}^2= \frac{1}{2}R^2-2$ (cf. Equation \ref{cansq});
	\item $\chi(\O_{\widetilde{X}})=\frac{1}{8}R^2-1$ (cf. Equation \ref{eulchar}).
\end{itemize}

It is then clear that, if we assume that $H^2>2$, then $K_{\widetilde{X}}^2=4\chi(\O_{\widetilde{X}})+2<\frac{9}{2}\chi(\O_{\widetilde{X}})$.
\end{example}

\begin{remark}
Notice that Example \ref{ex_abe} is not exhaustive among examples of surfaces satisfying the equality in Theorem \ref{teo_mio4} with $q=2$.
Here we sketch briefly how such a classification would go.

Looking carefully at the construction in Example \ref{ex_abe}, we see that the property of the branch divisor $R$ that we have used in the computation of the invariants of $\widetilde{X}$ is the following: the branch divisor $R$ and all the reduced total transforms of $R$ in the canonical resolution process contains exactly one quadruple or quintuple point and all the other singularities are negligible (notice that the definition of this property is similar to the definition of negligible singularities of curves as in \cite{barth} Section II.8).
We consider quadruple or quintuple points because $\intinf{4}=\intinf{5}=2$. Hence, if there is a single point with this property, there is a single $i$ such that $m_i=2$ in the canonical resolution, in particular $K_{\widetilde{X}}^2=2L^2-2$ and $\chi(\O_{\widetilde{X}})=\frac{1}{2}L^2-1$ (cf.  Equations \ref{cansq} and \ref{eulchar}).
For example, a branch divisors containing a single quintuple ordinary point or a triple point with a single tangent is allowed.

Another property we need is that the surface $\widetilde{X}$ coming from the canonical resolution is minimal, otherwise its minimal model $\overline{X}$ would satisfy $K_{\overline{X}}^2-4\chi(O_{\overline{X}})>2$ because $K_{\overline{X}}^2>K_{\widetilde{X}}^2$.
For example, if we take $R$ having a single quintuple point then $\widetilde{X}$ is minimal (hence we obtain another example similar to the one in Example \ref{ex_abe}) while, if it has a triple point with a single tangent, $\widetilde{X}$ contains a $-1$-curve (hence we exclude this case).

One has the to consider case by case (again, in a similar way as in the study of negligible singularities in \cite{barth} Section II.8)  the resolution of all quadruple and quintuple points and see which of them have the properties just reported.
\label{rem_abe}
\end{remark}

From now on we suppose that all the varieties are defined over an algebraically closed field of characteristic $2$.
We give examples of minimal surfaces of general type with maximal Albanese dimension whose ivariants satisfy $K_X^2=4\chi(\O_X)+4(q-2)$ with $q=q(X)=q'(X)$.

First we show examples where the Albanese morphism is a splittable separable degree-two morphism.

\begin{example}[Splittable separable double cover of an Abelian surface]
\label{es_splitsepdoubleabe}
Let $Y_0$ be an Abelian surface and $L_0$ be an ample line bundle on $Y_0$.
Let $R_1$ and $R_2$ be two effective divisor such that $\O_{Y_0}(R_1)=L_0$ and $\O_{Y_0}(R_2)=L_0^{\otimes2}$ and every singular point of $R_1$ is a smooth point of $R_2$ and no irreducible component of $R_1$ is contained twice in $R_2$.
Notice that it is not difficult to find such divisors in an Abelian surface: if we further assume $L_0$ to be very ample, we can even take $R_1$ to be smooth by Bertini's Theorem and $R_2$ whatever effective divisor in the linear series of $2R_1$ different from $2R_1$ itself.

Consider the splittable separable double cover $\pi_0\colon X_0\to Y_0$ associated with $R_1$ and $R_2$ (cf. Remark \ref{rem_splitdoublecover2}): we know that it has only rational double points as singularities (outside the singular points of $R$ we derive this from Lemma \ref{lemma_branchan} while restricting to a neighbourhood of a singular point $p$ of $R$ we have a local equation $t_i^2+a_it_i+b_i$ where, by hypothesis, $a_i\in m_p^2$ and $b_i\in m_p\setminus m_p^2$: hence the Jacobian criterion applies and shows that $X$ is smooth above $p$) and, after the canonical resolution, we get
\begin{equation}
\begin{tikzcd}
X\arrow{d}{\pi}\arrow{r}{\phi} & X_0\arrow{d}{\pi_0}\\
Y\arrow{r}{\psi} & Y_0
\end{tikzcd}
\label{eq_quad}
\end{equation}
where $X_0$ is the canonical model of $X$ (cf. Remark \ref{rem_mi2}).
By the ampleness of $L_0$, it is clear that $X$ is a surface of general type (indeed $K_{X_0}=\pi_0^*L_0$) and that 
\begin{equation}
q'(X)=h^1(Y_0,\O_{Y_0})+h^1(Y_0,L_0^{-1})=2=q(X)
\label{eq_stellaq}
\end{equation}
(cf. \cite{mumford} The Vanishing Theorem Section III.16 for the vanishing of $h^1(Y_0,L_0^{-1})$). 
Moreover the Albanese morphism of $X$ is $\psi\circ\pi$ because otherwise $X$ would be birational to an Abelian surface.

Using Equations \ref{cansq} and \ref{eulchar} we get
\begin{equation}
K_X^2=2L_0^2
\label{eq_mah11bis}
\end{equation}
and
\begin{equation}
\chi(\O_X)=\frac{1}{2}L_0^2
\label{eq_max12bis}
\end{equation}
from which
\begin{equation}
K_X^2-4\chi(\O_X)=0
\label{eq_char2elellbis}
\end{equation}
follows.
\end{example}

\begin{example}[Splittable separable double cover of a product elliptic surface]
\label{es_splitsepdoubleprodell}
Let $C$ be a curve of genus $g\geq 2$, $E$ be an elliptic curve and denote by $Y_0=C\times E$.
Let $R_1$ and $R_2$ be two effective divisors such that $\O_{Y_0}(R_1)=L_0$ and $\O_{Y_0}(R_2)=L_0^{\otimes2}$ such that every singular point of $R_1$ is a smooth point of $R_2$ and no irreducible component of $R_1$ is contained twice in $R_2$ where $L_0\sim_{hom}\O_{Y_0}(C+dE)$ with $d>7(q-2)$ and $q=g+1$.
For example we can take $R_1=C_1+\sum_{i=1}^dE_i$ and $R_2=C_1+C_2+\sum_{i=d+1}^{3d}E_i$ where $C_i$ (respectively $E_i$) are fibres of the natural projection $\pi_E\colon Y_0\to E$ (respectively $\pi_C\colon Y_0\to C$) and $C_1\neq C_2$ (respectively $E_i\neq E_j$ for $i\neq j$).
Notice that, by definition, $C_1$ is the only curve contained in both $R_1$ and $R_2$.

Consider the splittable separable double cover $\pi_0\colon X_0\to Y_0$ associated with $R_1$ and $R_2$ (cf. Remark \ref{rem_splitdoublecover2}): we know, arguing as in Example \ref{es_splitsepdoubleabe}, that it has only rational double points as singularities and after the canonical resolution we get
\begin{equation}
\begin{tikzcd}
X\arrow{d}{\pi}\arrow{r}{\phi} & X_0\arrow{d}{\pi_0}\\
Y\arrow{r}{\psi} & Y_0=C\times E
\end{tikzcd}
\label{eq_quad}
\end{equation}
where $X_0$ is the canonical model of $X$ (cf. Remark \ref{rem_mi2}).
By the ampleness of $L_0$, it is clear that $X$ is a surface of general type (indeed $K_{X_0}=\pi_0^*(L_0+K_{Y_0})$) and that 
\begin{equation}
q'(X)=h^1(Y_0,\O_{Y_0})+h^1(Y_0,L_0^{-1})=q=q(X).
\label{eq_stellaq}
\end{equation}

By Equations \ref{cansq} and \ref{eulchar} we get
\begin{equation}
K_X^2=2K_{Y_0}^2+4K_{Y_0}.L_0+2L_0^2=8(q-2)+4d
\label{eq_mambo1}
\end{equation}
and 
\begin{equation}
\chi(\O_X)=\chi(\O_{Y_0})+\frac{1}{2}K_{Y_0}.L_0+\frac{1}{2}L_0^2=q-2+d
\label{eq_mambo2}
\end{equation}
from which we easily derive
\begin{equation}
K_X^2-4\chi(\O_X)=4(q-2)
\label{eq_mambo3}
\end{equation}
and
\begin{equation}
K_X^2-\frac{9}{2}\chi(\O_X)=\frac{7}{2}(q-2)-\frac{1}{2}d<0
\label{eq_mambo4}
\end{equation}
where the last inequality follows from the assumption $d>7(q-2)$.
\end{example}

Now we recall and deepen a general construction of inseparable double covers given by rational vector fields on a product of two curves taken from \cite{liedtkeuni}.
First of all we recall some notable examples of rational vector fields on curves: all of them are essentially taken from Section 2 of \cite{liedtkeuni} with some variations.

\begin{example}[Multiplicative and additive rational vector fields on an elliptic curve with only two zeroes and poles]
\label{es_multaddvf1}
Over a field of characteristic $2$ an elliptic curve $E$ can be seen as the projectivization inside $\P^2$ of 
\begin{equation}
y^2+\alpha xy+y+x^3
\label{eq_deuring}
\end{equation} 
where $\alpha\in k$ is such that $\alpha^3\neq 1$: this is the so called Deuring normal form (cf. \cite{silverman} Proposition A.1.3).
In particular the $j$-invariant of $E$ is $j(E)=\frac{\alpha^{12}}{\alpha^3-1}$ and $E$ is supersingular (i.e. it has no non-trivial $2$-torsion points) if and only if $\alpha=0$ (ibidem).

Notice that every rational vector field on $E$ is uniquely determined by its value on the function $x$: indeed we can determine its value on $y$ (hence on all the field $k(E)$ of rational functions on $E$) via the condition that it has to vanish on $y^2+\alpha xy+y+x^3$.
Now let $\partial_x$ be the rational vector field on $E$ which correspond to the derivation  by $x$: from the condition $\partial_x(y^2+\alpha xy+y+x^3)=0$ we get
\begin{equation}
(1+\alpha x)\partial_x(y)=x^2+\alpha y:
\label{eq_dery1}
\end{equation}
we claim that $\delta=(1+\alpha x)\partial_x$ is a never-vanishing vector field on $E$.

Indeed $\delta(x)=1+\alpha x=0$ implies $x=\frac{1}{\alpha}$ while $\delta(y)=x^2+\alpha y=0$ implies that $y=\frac{1}{\alpha^3}$, but $(\frac{1}{\alpha},\frac{1}{\alpha^3})$ is not a point of $E$: in particular, because the tangent sheaf of an elliptic curve is trivial, we can extend this to a regular never-vanishing vector field on $E$.

If we consider the rational vector field $\delta_E'=x\delta$: by the previous discussion we easily see that it has a single zero at $(0,0)$ and at $(0,1)$ and a double pole at $\infty$.
We notice that $\delta_E'$ is a multiplicative rational vector field, indeed
\begin{equation}
\delta_E'^2(x)=\delta_E'(x+\alpha x^2)=x+\alpha x^2=\delta_E'(x)
\label{eq_mult1x}
\end{equation}
and
\begin{equation}
\delta_E'^2(y)=\delta_E'(x^3+\alpha xy)=\delta_E'(y^2+y)=\delta_E'(y).
\label{eq_mult1y}
\end{equation}

On the other hand, if we assume $\alpha\neq 0$, a similar calculation shows that $\delta_E=(1+\alpha x)\delta$ is an additive rational vector field with a double zero at $(\frac{1}{\alpha},\frac{1}{\sqrt{\alpha^3}})$ and a double pole at $\infty$.
If $\alpha=0$, i.e. the elliptic curve is supersingular, we believe it is not possible to find an additive rational vector field with only two (possibly equal) zeroes.

Let $m(x)=\prod_{i=1}^d(x+\gamma_i)$, $n(x)=\prod_{i=1}^d(x+\epsilon_i)$, $\beta(x)=\frac{m^2(x)}{\alpha n^2(x)}$ and $\overline{\beta}(x)=\frac{m^2(x)+xn^2(x)}{n^2(x)}$: we can always choose $m(x)$ and $n(x)$ such that $\gamma_i\neq\frac{1}{\alpha},0\neq \epsilon_k$ for all $i$ and $k$ and that all the zeroes $\mu_i$ for $i=1,\ldots 2d+1$ of $m^2(x)+xn^2(x)$ are non-multiple and mutually different from $\frac{1}{\alpha}, 0$ and $\epsilon_i$ ($i=1,\ldots, d$).
A similar computation as above, shows that $\overline{\delta}_E=\beta(x)\delta_E$ and $\overline{\delta}_E'=\overline{\beta}(x)\partial_x$ are, respectively, an additive and a multiplicative rational vector field on $E$.
Moreover their corresponding divisors are, respectively,
\begin{equation}
\begin{split}
2P_{\frac{1}{\alpha}}-2P_\infty+2G_1+2&G_1'+\ldots+2G_d+\\
+2G_d'-2E_1-2E_1'-&\ldots-2E_d-2E_d'
\end{split}
\label{eq_divdelta}
\end{equation}
and
\begin{equation}
\begin{split}
P_{\infty}-2P_{\frac{1}{\alpha}}+M_1+M_1'+\ldots +&M_{2d+1}+M_{2d+1}'-\\
-2E_1-2E_1'-\ldots-&2E_d-2E_d'
\end{split}
\label{eq_divdelta'}
\end{equation}
where $G_i, G_i'$, $E_i, E_i'$ and $M_i, M_i'$  are the points where $x+\gamma_i$, $x+\epsilon_i$ and $x+\mu_i$ vanish, $P_{\frac{1}{\alpha}}=(\frac{1}{\alpha},\frac{1}{\sqrt{\alpha^3}})$ and $P_\infty$ is the point at infinity (notice that, if $\alpha=0$, then $P_{\frac{1}{\alpha}}=P_{\infty}$).
\end{example}

\begin{example}[Multiplicative and additive rational vector fields on a hyperelliptic curve]
\label{es_multaddvf>1}
Over a field of characteristic $2$, a smooth hyperelliptic curve $C$ of genus $g$ can be described as the projectivization inside the total space of the line bundle $\O_{\P^1}(g+1)$ of
\begin{equation}
y^2+yf(x)+g(x)
\label{eq_hyper2}
\end{equation}
where $f(x)$ is a polynomial of degree $g+1$ and $g(x)$ is a polynomial of degree $2g+2$, such that every zero of $f(x)$ is a simple zero of $g(x)$ (cf. \cite{bhosle} Section 1).
Moreover the zeroes of $f(x)$ represent the branch divisor of the hyperelliptic morphism $C\to \P^1$.
In order to have $C$ as tame as possible, i.e. $C\to \P^1$ has the maximum number of branch points, we require $f(x)$ to have no multiple zeroes.

Then we can rewrite the equation of $C$ as
\begin{equation}
y^2+y\prod_{i=0}^{g}(x+\alpha_i)+\prod_{i=0}^{g}(x+\alpha_i)\prod_{i=0}^{g}(x+\beta_i)
\label{eq_hyper2n}
\end{equation}
where $\alpha_i\neq \alpha_j$ for $i\neq j$ and $\beta_i\neq \alpha_j$ for all $i,j\in\{0,\ldots,g\}$.
Denote by $P_i$ for $i=0,\ldots,g$ the ramification points of $C$ 
and by $P_\infty$, $P_\infty'$ the two points of $C$ lying over $\infty\in\P^1$ and by $h=\prod_{i=0}^{g}(x+\beta_i)$.

Consider, as in Example \ref{es_multaddvf1}, the rational vector field $\delta=\partial_x$ on $C$: the equation defining $C$ tells us that
\begin{equation}
\partial_x(y)=\frac{f'(x)y+f'(x)h(x)+f(x)h'(x)}{f(x)}.
\label{eq_partialyhyper}
\end{equation}
We then easily derive that the divisor corresponding to $(x^2+\alpha)\delta$ is $2P_\alpha+2P_\alpha'-2P_0-\ldots-2P_g$ where $\alpha\neq\alpha_i$ for all $i=0,\ldots,g$ and $P_\alpha$ and $P_\alpha'$ are the two points of $C$ lying over $\alpha\in\P^1$: indeed $f(x)$ vanishes of order $2$ on every $P_i$, $X^2+\alpha$ vanishes of order two on $P_\alpha$ and $P_\alpha'$  and the degree of the tangent line bundle of $C$ is $-2g+2$.
In particular, because the divisor associated with $X^2+\alpha$ is $2P_\alpha+2P_\alpha'-2P_\infty-2P_\infty'$, we have that the divisor associated with $\delta$ is $2P_\infty+2P_\infty'-2P_0-\ldots-2P_g$. 

Let $m(x)=\prod_{i=1}^d(x+\gamma_i)$, $n(x)=\prod_{i=1}^d(x+\epsilon_i)$, $\beta(x)=\frac{m^2(x)}{n^2(x)}$ and $\overline{\beta}(x)=\frac{m^2(x)+xn^2(x)}{n^2(x)}$: we can always choose $m(x)$ and $n(x)$ such that $\gamma_i\neq\alpha_j\neq \epsilon_k$ for all $i$, $j$ and $k$ and that all the zeroes $\mu_i$ for $i=1,\ldots 2d+1$ of $m^2(x)+xn^2(x)$ are non-multiple and mutually different from $\alpha_i$ ($i=0,\ldots g$) and $\epsilon_i$ ($i=1,\ldots, d$).
A similar computation to the one in Example \ref{es_multaddvf1}, shows that $\delta_C=\beta(x)\delta$ and $\delta_C'=\overline{\beta}(x)\delta$ are, respectively, an additive and a multiplicative rational vector field on $C$.
Moreover their corresponding divisors are, respectively,
\begin{equation}
\begin{split}
2P_\infty+2P_\infty'+2G_1+2&G_1'+\ldots+2G_d+\\
+2G_d'-2E_1-2E_1'-\ldots-2E_d&-2E_d'-2P_0-\ldots-2P_g
\end{split}
\label{eq_divdelta}
\end{equation}
and
\begin{equation}
\begin{split}
P_\infty+P_\infty'+M_1+M_1'+\ldots +&M_{2d+1}+M_{2d+1}'-\\
-2E_1-2E_1'-\ldots-2E_d-2E_d'-&2P_0-\ldots-2P_g
\end{split}
\label{eq_divdelta'}
\end{equation}
where $G_i, G_i'$, $E_i, E_i'$ and $M_i, M_i'$  are the points where $x+\gamma_i$, $x+\epsilon_i$ and $x+\mu_i$ vanish.
\end{example}

\begin{proposition}
\label{propo_liedtkeaxb}
Let $Y=A\times B$ be a product surface where $g(A), g(B)\geq 1$, $\delta_A$ and $\delta_B$ be two rational additive (respectively multiplicative) vector fields on $A$ and $B$ whose zeroes and poles have multiplicity at most $2$ both of which have at least one zero.
Then the rational vector field $\delta_A+\delta_B$ naturally induced on $Y$ is still additive (respectively multiplicative).

If we consider the induced inseparable double cover $g\colon Y\to X^{(-1)}=Y/(\delta_A+\delta_B)$ we have that the corresponding 1-foliation $\F$ is isomorphic, as a line bundle, to $\O_{Y}(-\pi_A^*\delta_{A_\infty}-\pi_B^*\delta_{B_\infty})$ where $\delta_{A_\infty}$ (respectively $\delta_{B_\infty}$) is the divisor of poles of $\delta_A$ (respectively $\delta_B$) and $\pi_A$, $\pi_B$ are the projection to $A$ and $B$.
Moreover the singularities of $X^{(-1)}$ lie in the image of the points $(a,b)$ where $a$ and $b$ are both zeroes or both poles of $\delta_A$ and $\delta_B$.

If we consider the dual inseparable double cover $f\colon X\to Y$ we have that its associated line bundle $L$ is isomorphic to $\frac{-K_Y-\F}{2}$ and is effective, in particular $X$ is the canonical model of a surface of general type with maximal Albanese dimension and $q'(X)=q(X)=g(A)+g(B)$.
\end{proposition}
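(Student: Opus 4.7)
The plan is to verify the four claims sequentially, extracting the foliation $\F$ and the line bundle $L$ directly from the local shape of $\pi_A^*\delta_A+\pi_B^*\delta_B$ on $Y$, and then reading off the properties of $X$ from the canonical resolution formulas \ref{cansq}--\ref{eulchar}. For the 2-closure type, the key observation is that $\pi_A^*\delta_A$ and $\pi_B^*\delta_B$ involve disjoint sets of local coordinates, hence commute as derivations on $\O_Y$; since the characteristic is $2$, this gives $(\pi_A^*\delta_A+\pi_B^*\delta_B)^2 = (\pi_A^*\delta_A)^2+(\pi_B^*\delta_B)^2 = \lambda(\pi_A^*\delta_A+\pi_B^*\delta_B)$ with $\lambda\in\{0,1\}$, preserving the additive or multiplicative type.

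Next I would compute $\F$ via Proposition \ref{propo_folratvect}. Writing locally $\delta_A=(g_A/h_A)\partial_x$ and $\delta_B=(g_B/h_B)\partial_y$ in coprime form, the rational vector field $\pi_A^*\delta_A+\pi_B^*\delta_B$ has no divisorial zeros (simultaneous vanishing of its two summands is a codimension-two condition) and pole divisor equal to $\pi_A^*\delta_{A,\infty}+\pi_B^*\delta_{B,\infty}$, yielding $\F\cong\O_Y(-\pi_A^*\delta_{A,\infty}-\pi_B^*\delta_{B,\infty})$. Clearing denominators produces the regular section $s=g_Ah_B\partial_x+g_Bh_A\partial_y$ of $\T_Y$ generating $\F$; a case check using coprimality of $g_A,h_A$ and of $g_B,h_B$ shows that $s$ vanishes exactly at those $(a,b)\in Y$ where $a$ and $b$ are simultaneously zeros, or simultaneously poles, of $\delta_A$ and $\delta_B$. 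By Theorem \ref{teo_doublefol}, these are the only points whose image in $X^{(-1)}$ can be singular.

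For $L$, Equation \ref{eq_lf} gives $-2L=K_Y+\F$. Using $K_Y=\pi_A^*K_A+\pi_B^*K_B$ together with the linear equivalences $\delta_{A,\infty}-K_A\sim\delta_{A,0}$ and $\delta_{B,\infty}-K_B\sim\delta_{B,0}$ (both immediate from $\delta_A,\delta_B$ being rational sections of the tangent bundles of $A,B$), one obtains $2L\sim\pi_A^*\delta_{A,0}+\pi_B^*\delta_{B,0}$. The main obstacle I foresee is upgrading this linear equivalence to an effective representative of $L$ itself. My plan is: the multiplicity-at-most-$2$ hypothesis, combined with the additive or multiplicative structure, forces $\delta_{A,0}$ to admit a splitting $\delta_{A,0}=D_A'+2D_A''$ with $D_A'$ reduced and of even degree, so that divisibility of $\Pic^0(A)$ produces a line bundle $L_A$ on $A$ with $2L_A\sim\delta_{A,0}$ and an effective global section; analogously one obtains $L_B$ on $B$, and then $L\cong\pi_A^*L_A\otimes\pi_B^*L_B$ is effective. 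The hypothesis that each of $\delta_A,\delta_B$ has at least one zero is precisely what makes $L_A$ and $L_B$ nontrivial.

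Once $L$ is effective, the canonical resolution $\widetilde X\to X$ is smooth, and formulas \ref{cansq}--\ref{eulchar} (which apply in characteristic $2$ as well) give $K_{\widetilde X}^2$ and $\chi(\O_{\widetilde X})$ in terms of $K_Y$ and $L$; positivity of $L$ combined with $K_Y$ being nef (as $Y$ is a product of curves of positive genera) shows that $\widetilde X$ is a minimal surface of general type with canonical model $X$, using Remark \ref{rem_mi2} together with the description of the singular locus above to ensure the multiplicities $m_i$ all equal one. Maximal Albanese dimension is inherited from $Y$ since $f$ is finite and $\Alb(Y)=\jac(A)\times\jac(B)$ has dimension $g(A)+g(B)$. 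Because $f$ is purely inseparable it is a universal homeomorphism, so the induced morphism $\Alb(X)\to\Alb(Y)$ is an isogeny and $q(X)=g(A)+g(B)$; the equality $q'(X)=q(X)$ follows from the Leray spectral sequence of $f$ applied to $0\to\O_Y\to f_*\O_X\to L^{-1}\to 0$, together with a vanishing statement for $h^1(Y,L^{-1})$ in the spirit of Lemma \ref{lemma_kodvan}.
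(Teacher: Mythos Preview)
Your approach follows the paper's essentially line for line: the $2$-closure computation, the identification of $\F$ via Proposition~\ref{propo_folratvect}, the description of the singular locus, the formula $2L=-K_Y-\F$, and the sandwich argument for $q(X)$ are all handled the same way.

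Two points deserve attention. First, your plan for the effectiveness of $L$ is over-engineered and the sketched step does not obviously go through: even if $\delta_{A,0}=D_A'+2D_A''$ with $D_A'$ reduced of even degree, divisibility of $\Pic^0(A)$ only produces a line bundle $L_A$ with $2L_A\sim\delta_{A,0}$, not one with a global section (a square root of an effective divisor on a curve of genus $\geq 2$ need not be effective). The paper sidesteps this entirely: from $\delta_{A,\infty}-K_A\sim\delta_{A,0}>0$ and the analogous relation on $B$ one gets directly $2L\sim\pi_A^*\delta_{A,0}+\pi_B^*\delta_{B,0}$, which is effective, and that is all that is used downstream (ampleness of $K_Y+L$ is numerical, and the comparison with Remark~\ref{rem_effdouble} only concerns effectiveness up to a multiple). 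So drop the attempt to make $L$ itself effective and work with $2L$.

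Second, for the claim that all $m_i=1$ you should invoke Remark~\ref{rem_atmost10} rather than Remark~\ref{rem_mi2}: the multiplicity-at-most-$2$ hypothesis bounds the length of the zero scheme of $\F$ at each isolated point by $4<10$, and it is Remark~\ref{rem_atmost10} that converts this into $m_i=1$; Remark~\ref{rem_mi2} then gives that $X$ has only rational double points and is the canonical model.
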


\begin{remark}
Of course $X$ is singular (by hypothesis the foliation $\delta_A+\delta_B$ has isolated zeroes), hence by $\Alb(X)$ we mean the Albanese variety of a minimal smooth model of $X$ and by $q(X)$ we mean, as usual, the dimension of $\Alb(X)$.
\label{rem_alb2qq'}
\end{remark}

\begin{proof}
The fact that $\delta_A+\delta_B$ is additive (respectively multiplicative) is immediate.

It is clear by definition that $\delta_A+\delta_B$ vanishes on points $(a,b)$ where both $a$ and $b$ are vanishing points of $\delta_A$ and $\delta_B$ respectively and has poles on points $(a,b)$ where $a$ or $b$ is a pole of $\delta_A$ or of $\delta_B$ respectively.
In particular via the identification of Proposition \ref{propo_folratvect}, the formula for $\F$ and for singular points of $X^{(-1)}$ easily follows (cf. also \cite{liedtkeuni} Proposition 4.2)

Thanks to the assumption on zeroes and poles of $\delta_A$ and $\delta_B$, $X$ has only rational double points (cf. Remarks \ref{rem_mi2} and \ref{rem_atmost10}). 
The formula for $L$ is simply an application of Equation \ref{eq_lf} and is effective because $\delta_{A_\infty}-K_A>0$ (respectively $\delta_{B_\infty}-K_B>0$) by hypothesis.  
Hence, via Equation \ref{eq_candoubcov}, we derive that $X$ is the canonical model of a surface of general type.
It is clear by construction and by universal property of the Albanese variety that $\Alb(X)$ lies between $\Alb(Y)$ and $\Alb(Y^{(1)})=\Alb(Y)^{(1)}$ (i.e. there exists a morphism $\Alb(Y)^{(1)}\to\Alb(X)$ and $\Alb(X)\to\Alb(Y)$ such that the composition is the $k$-linear Frobenius) and, similarly, $\Alb(Y)$ lies between $\Alb(X)$ and $\Alb(X)^{(-1)}$.
In particular $X$ has maximal Albanese dimension and $q(X)=g(A)+g(B)$.
Moreover, $q'(X)=g(A)+g(B)$ follows from K\"unneth formula and the long exact sequence in cohomology associated with the short exact sequence \ref{eq_seqdouble} defining the double cover.
\end{proof}

\begin{corollary}
\label{coro_liedtkeaxb}
In the same situation as in Proposition \ref{propo_liedtkeaxb} assume that either the zeroes of $\delta_A$ and $\delta_B$ are simple or they are double.
Let 
\begin{equation}
\begin{tikzcd}
\widetilde{X}\arrow{r}{\phi}\arrow{d}{f_t} & X\arrow{d}{f}\\
Y_t\arrow{r}{\psi} & Y
\end{tikzcd}
\label{eq_canresaxb}
\end{equation}
be the canonical resolution of $f\colon X\to Y$.
Then $m_i=1$ for all $i=1,\ldots,t$ and $f_t$ is an inseparable non-splittable double cover.
\end{corollary}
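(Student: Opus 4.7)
The corollary splits into two claims: (i) $m_i=1$ for every $i$ in the canonical resolution, and (ii) $f_t$ is an inseparable non-splittable double cover. I will address them in that order, invoking Remark \ref{rem_atmost10} for (i) and a combination of Remark \ref{rem_albinsfact} with Remark \ref{rem_effdouble} for (ii).

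For (i) I would show that the zero scheme $Z$ of the $1$-foliation $\F$ associated (via Proposition \ref{propo_folratvect}) with $\delta_A+\delta_B$ has every local multiplicity bounded by $10$; Remark \ref{rem_atmost10} then gives $m_i=1$. By Proposition \ref{propo_liedtkeaxb}, $\supp Z$ consists of the points $(a,b)\in A\times B$ where $a,b$ are both zeros or both poles of $\delta_A,\delta_B$. Pick local coordinates $(u,v)$ centered at such a point and adapted to the product: after saturation the rational vector field $\delta_A+\delta_B$ is locally a section of the form $\alpha\,u^{m_a}\partial_u+\beta\,v^{m_b}\partial_v$ with $\alpha,\beta$ units, so by the Koszul complex the local length of $Z$ at $(a,b)$ equals $m_a m_b$. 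Under the Corollary's hypothesis (all zeros simple or all zeros double) together with the Proposition's hypothesis (polar orders $\le 2$), one has $m_am_b\le 4<10$, so Remark \ref{rem_atmost10} applies.

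For (ii), $f_t$ is inseparable because $f$ is so by construction (as dual of the inseparable quotient $g$) and each step of the canonical resolution again produces a dual pair $(f_i,g_i)$ with $\F_i=\psi_i^*\F_{i-1}(E_i)$ (using $m_i=1$). To obtain non-splittability, I would apply Remark \ref{rem_albinsfact} to $\widetilde{X}$: by Proposition \ref{propo_liedtkeaxb}, $\widetilde{X}$ has maximal Albanese dimension with $\Alb(\widetilde{X})=A\times B$ and inseparable Albanese morphism factoring through $f_t$, so by uniqueness of the canonical factorization (Lemma \ref{lemma_albinsfact}) the morphism $f_t$ agrees with it. Then Remark \ref{rem_albinsfact} together with Igusa's theorem gives $h^0(-2M_t)\ge q=g(A)+g(B)$ for the line bundle $M_t$ of the dual cover $g_t^{(1)}$, which forces $g_t^{(1)}$ to be non-splittable. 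To transfer this to $f_t$ itself, I would use the relations $f_t^*\F_t=2M_t$ (Lemma \ref{lemma_flgm}) and $-2L_t=K_{Y_t}+\F_t$ (Equation \ref{eq_lf}): if some $mL_t$ were effective, pulling back along $f_t$ and combining with $h^0(-2M_t)\ge q$ would contradict the non-splittability of $g_t^{(1)}$ established above. Remark \ref{rem_effdouble} then forces $f_t$ to be non-splittable.

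The main obstacle is this last transfer. A direct alternative is a cohomological computation: starting from $2L_t=\psi^*(2L)-2\sum E_i^*$ with $2L=\pi_A^*(\delta_A)_0+\pi_B^*(\delta_B)_0$, apply K\"unneth to decompose $H^0(Y,mL)$ and track the vanishing orders imposed at the blown-up points. The subtlety is that at the exceptional divisors lying over common poles of $\delta_A,\delta_B$ the divisor $2L$ has local multiplicity $0$ while $2E_i^*$ contributes $-2$, forcing strong vanishing conditions; combining these with the vanishing conditions at common zeros should rule out every nonzero section of $mL_t$ for $m>0$, and Remark \ref{rem_effdouble} then concludes.
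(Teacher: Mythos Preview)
Your argument for (i) is correct and essentially what the paper does (the paper simply invokes the rational double point statement already contained in Proposition~\ref{propo_liedtkeaxb}, which in turn rests on Remark~\ref{rem_atmost10}).

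For (ii) there is a genuine gap. Remark~\ref{rem_albinsfact} yields non-splittability of the \emph{dual} cover $g^{(1)}\colon Y^{(1)}\to X$, not of $f$; your transfer step does not go through. Concretely, suppose $mL_t$ were effective. Pulling back by $f_t$ gives that $m\G_t=f_t^*(mL_t)$ is effective (Lemma~\ref{lemma_flgm}), but this tells you nothing incompatible with $h^0(-2M_t)\ge q$: the relation $-2M_t=K_{\widetilde{X}}+\G_t$ does not let you deduce anything about sections of $M_t$ from effectivity of $\G_t$. There is also an earlier unjustified step: you assert that $f_t$ coincides with the canonical factorization of $\alb_{\widetilde{X}}$ from Lemma~\ref{lemma_albinsfact}, but this requires identifying the foliation $\T_{\widetilde{X}/\Alb(\widetilde{X})}$ with the one defining $f_t$, and you have not pinned down $\Alb(\widetilde{X})$ (Proposition~\ref{propo_liedtkeaxb} only says $\Alb(X)$ sits between $\Alb(Y)$ and $\Alb(Y)^{(1)}$).

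Your ``direct alternative'' is exactly the route the paper takes, but the paper's argument is more concrete than your sketch and does not use K\"unneth or the conditions at common poles at all. The key observation is this: if $|2L_t|\neq\emptyset$, pushing down to $Y=A\times B$ produces an effective $D\sim 2L=\pi_A^*\delta_{A_0}+\pi_B^*\delta_{B_0}$ with multiplicity $\ge 2$ at every common zero $(a,b)$ of $\delta_A,\delta_B$. One then shows there exists a zero $a_0$ of $\delta_A$ with $\pi_A^{-1}(a_0)\not\subset D$ (otherwise the numerics $2L.A=d_{A_0}$, $2L.B=d_{B_0}$ force $D=\pi_A^*\delta_{A_0}+\pi_B^*D_B$, which visibly fails the multiplicity condition). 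For this $a_0$ the fibre $B_{a_0}=\pi_A^{-1}(a_0)$ meets $D$ properly, and the imposed multiplicities at the $d_{B_0}$ points $(a_0,b_i)$ give $D.B_{a_0}\ge 2d_{B_0}$, contradicting $D.B_{a_0}=2L.B=d_{B_0}$. The double-zero case is handled analogously, using that the $D_4$ singularities force multiplicity $\ge 6$ at each common zero. This intersection-with-a-fibre argument is the missing idea in your sketch.
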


\begin{proof}
We have that $m_i=1$ for all $i=1,\ldots,t$ because all the singularities of $X$ are rational double points.
Because $f$ is inseparable, if $f_t$ were splittable $2L_t$ would have at least a global section (cf. Remark \ref{rem_effdouble}): we are going to show that this is not the case and, in order to do it, we need to better understand the description of $L$.

Let $\delta_{A_0}-\delta_{A_\infty}$ (respectively $\delta_{B_0}-\delta_{B_\infty}$) be the divisor associated with $\delta_A$ (respectively $\delta_B$) where $\delta_{A_0}$, $\delta_{A_\infty}$, $\delta_{B_0}$ and $\delta_{B_\infty}$ are effective with degree $d_{A_0}$, $d_{A_\infty}$, $d_{B_0}$ and $d_{B_\infty}$ respectively.
In Proposition \ref{propo_liedtkeaxb} we have seen that $\F=-\pi_A^*\delta_{A_\infty}-\pi_B^*\delta_{B_\infty}$ and that $2L=-K_Y-\F$.
By definition we have that $\delta_{A_\infty}-K_A=\delta_{A_0}$ and $\delta_{B_\infty}-K_B=\delta_{B_0}$, in particular
\begin{equation}
2L=\pi_A^*\delta_{A_0}+\pi_B^*\delta_{B_0}
\label{eq_2laxb}
\end{equation}
from which we derive $2L.B=d_{B_0}$ and $2L.A=d_{A_0}$ where $A$ and $B$ denote a fibre of $\pi_B$ and $\pi_A$ respectively.

In the canonical resolution of $f\colon X\to Y$, we blow-up all the points $(a,b)$ where both $a$ and $b$ are zeroes or poles of $\delta_A$ and $\delta_B$ and $L_t$ is defined by 
\begin{equation}
L_t=\psi^*L(-\sum_{i=1}^tE_i)
\label{eq_ltcanres}
\end{equation}
because $m_i=1$ for all $i$, where the $E_i$'s are all the exceptional divisors of $Y_t$.
Notice also that if $a$ and $b$ are simple (respectively double) zeroes or poles of $\delta_A$ and $\delta_B$ then the singularity above is an $A_1$ (respectively $D_4$) singularity (cf. \cite{liedtkeuni} Proposition 3.2): in particular one single blow-up is enough to solve it (respectively, after the first blow-up we need three blow-ups in three different points in the exceptional divisor).

Now assume that all the zeroes of $\delta_A$ and $\delta_B$ are simple: if $2L_t$ were effective we would find an effective divisor $D$ linearly equivalent to $2L$ which passes twice through all the couples $(a,b)$ as above.
We can show that, in this case,
there would exist a zero $a_0$ of $\delta_A$ such that $\pi_A^*a_0$ is not contained in $D$.
Indeed, suppose, by contradiction, that for every zero $a$ of $\delta_A$ the fibre $\pi^*_Aa$ is contained in $D$: then, by the fact that $2L.A=d_{A_0}$, we would get that all the other components of $D$ are fibres of $\pi_B\colon A\times B\to B$.
In particular it would follow from $2L.B=d_{B_0}$, that $D=\pi_A^*\delta_{A_0}+\pi_B^*D_B$ with $\deg(D_B)=d_{B_0}$.
It is then immediate that such a divisor cannot pass twice through every couple $(a,b)$ as above.
Hence there exists an $a_0$ such that $\pi_A^*a_0$ is not contained in $D$.
Recall that $D$ should pass through the different $d_{B_0}$ points $(a_0,b_i)$ twice, where $b_i$ are all the simple zeroes of $\delta_B$: hence $D.\pi_A^*a_0\geq2d_{B_0}$ which a contradiction to the existence of such a $D$.

Now assume that all the zeroes of $\delta_A$ and $\delta_B$ are double. If $2L_t$ were effective we would find an effective divisor $D$ linearly equivalent to $2L$ which passes through each point $(a,b)$, with $a$ and $b$ zeroes of $\delta_A$ and $\delta_B$, six times (twice for each of the three different tangents corresponding to the $D_4$ singularity we are solving).
In particular we would have that $D.\pi_A^*a_0\geq 4\frac{d_{B_0}}{2}=2d_{B_0}$, which is a contradiction (we are using that in this situation the number of zeroes of $\delta_A$ and $\delta_B$ are $\frac{d_{A_0}}{2}$ and $\frac{d_{B_0}}{2}$ respectively and that one of the tangents may be vertical, in which case that component could have zero intersection with the fibre).
\end{proof}

\begin{remark}
Recall that if we have two rational vector fields $\delta_1$ and $\delta_2$ we say that they are equivalent if there exists a rational function $\alpha$ such that $\delta_1=\alpha\delta_2$ (Definition \ref{def_ratvect}). 
Moreover we have shown that two equivalent rational vector fields are associated with the same 1-foliation $\F$ and, consequently, to the same inseparable double cover.
Clearly on a curve, all rational vector fields are equivalent and they are associated with the $k$-linear Frobenius morphism.

On the other hand, given equivalent rational vector fields $\delta_A$ and $\delta_A'$ on $A$ and $\delta_B$ and $\delta_B'$ on $B$, then, in general, $\delta_A+\delta_B$ is not equivalent to $\delta_A'+\delta_B'$ on $A\times B$.
\end{remark}

We are now ready to give explicit construction of minimmal surfaces of general with maximal ALbanese dimension satisfying $K_X^2=4\chi(\O_X)+4(q-2)<\frac{9}{2}\chi(\O_X)$ over an algebraically closed field of characteristic $2$ whose Albanese morphism is a purely inseparable morphism of degree $2$ onto its image.

\begin{example}[Double cover of a product Abelian surface via a rational vector field]

\label{es_addelles}

This is a generalization of Example 7.1 of \cite{gusunzhou} and Example 6.2 of \cite{takeda} (there it is considered only the case of a double cover of a product elliptic surface with supersingular elliptic fibre via a multiplicative rational vector field).
Let $E_1$ and $E_2$ be two elliptic curves and let $\delta_{E_1}'$ and $\delta_{E_2}'$ be the multiplicative rational vector fields defined in Example \ref{es_multaddvf1}.
Consider the quotient $g_0\colon Y_0=E_1\times E_2\to X_0^{(-1)}$ via the rational vector field $\delta_{E_1}'+\delta_{E_2}'$ and its dual morphism $\pi_0\colon X_0\to Y_0$.
After the canonical resolution we get the following diagram 
\begin{equation}
\begin{tikzcd}
X\arrow{d}{\pi}\arrow{r}{\phi} & X_0\arrow{d}{\pi_0}\\
Y\arrow{r}{\psi} & Y_0=E_1\times E_2:
\end{tikzcd}
\label{eq_canres2es}
\end{equation}
by Proposition \ref{propo_liedtkeaxb} and Corollary \ref{coro_liedtkeaxb} we know that $X$ is a surface of general type of maximal Albanese dimension with $q'(X)=q(X)=g(E_1)+g(E_2)=2$ whose canonical model is $X_0$. 
Moreover (ibidem) the $1$-foliation $\F_0$ associated with $\delta_{E_1}'+\delta_{E_2}'$ is linearly equivalent to $-2E_1'-2E_2'$ and the line bundle $L_0$ associated with $\pi_0$ is linearly equivalent to $E_1+E_2$ where $E_2$, $E_2'$ and $E_1$, $E_1'$ are fibres of the first and the second projection of $E_1\times E_2$ and $\pi_0$ is an inseparable double cover.

Using Equations \ref{cansq} and \ref{eulchar} we get
\begin{equation}
K_X^2=2L_0^2=4
\label{eq_mah11}
\end{equation}
and
\begin{equation}
\chi(\O_X)=\frac{1}{2}L_0^2=1
\label{eq_max12}
\end{equation}
from which
\begin{equation}
K_X^2-4\chi(\O_X)=0
\label{eq_char2elell}
\end{equation}
follows.
It is clear that the Albanese morphism of $X$ is $\psi\circ\pi$ because otherwise we would have that $X$ is birational to an Abelian variety.

If we assume that $E_1$ and $E_2$ are not supersingular and we take the two additive rational vector fields $\delta_{E_1}$ and $\delta_{E_2}$ as in Example \ref{es_multaddvf1} we obtain another construction of a surface of general type lying over the Severi line and everything works as above.

With similar calculation as above, we obtain a surface $X$ satisfying $K_X^2=4\chi(\O_X)$ if in the above discussion we substitute the rational vector fields $\delta_{E_1}$, $\delta_{E_2}$, $\delta_{E_1}'$ and $\delta_{E_2}'$ with $\overline{\delta}_{E_1}$, $\overline{\delta}_{E_2}$, $\overline{\delta}_{E_1}'$ and $\overline{\delta}_{E_2}'$.
\end{example}

\begin{example}[Double cover of a product  elliptic surface via a rational vector field]
\label{es_addhyperelles}

Let $C$ be a hyperelliptic curve of genus $g$ and $E$ be an elliptic curve and let $\delta_{E}'$ and $\delta_{C}'$ be the multiplicative rational vector fields defined in Examples \ref{es_multaddvf1} and \ref{es_multaddvf>1}.
Consider the quotient $g_0\colon Y_0=C\times E\to X_0^{(-1)}$ via the rational vector field $\delta_{C}'+\delta_{E}'$ and its dual morphism $\pi_0\colon X_0\to Y_0$.
After the canonical resolution we get the following diagram 
\begin{equation}
\begin{tikzcd}
X\arrow{d}{\pi}\arrow{r}{\phi} & X_0\arrow{d}{\pi_0}\\
Y\arrow{r}{\psi} & Y_0=C\times E:
\end{tikzcd}
\label{eq_canres2es}
\end{equation}
by Proposition \ref{propo_liedtkeaxb} and Corollary \ref{coro_liedtkeaxb} we know that $X$ is a surface of general type of maximal Albanese dimension with $q'(X)=q(X)=g(C)+g(E)=g+1$ (which we denote simply by $q$) whose canonical model is $X_0$.
Moreover (ibidem) the $1$-foliation $\F_0$ associated with $\delta_{C}'+\delta_{E}'$ is linearly equivalent to $-2C'-\sum_{i=1}^{2d+g+1}2E_i'$ and the line bundle $L_0$ associated with $\pi_0$ is linearly equivalent to $C+\sum_{i=1}^{2d+2}E_i$ where $E_i$, $E_i'$ and $C$, $C'$ are fibres of the first and the second projection of $C\times E$, $d$ is as in the definition of the rational vector field $\delta_C'$ and $\pi_0$ is an inseparable double cover.

Using Equations \ref{cansq} and \ref{eulchar} we get
\begin{equation}
K_X^2=2K_{Y_0}^2+4K_{Y_0}.L_0+2L_0^2=8(g-1)+8(d+1)=8(q-2)+8(d+1)
\label{eq_mah21}
\end{equation}
and
\begin{equation}
\chi(\O_X)=2\chi(\O_{Y_0})+\frac{1}{2}K_{Y_0}.L_0+\frac{1}{2}L_0^2=q-2+2(d+1)
\label{eq_mah22}
\end{equation}
from which we derive
\begin{equation}
K_X^2-4\chi(\O_X)=4(q-2)
\label{eq_char2elellbis}
\end{equation}
and
\begin{equation}
K_X^2-\frac{9}{2}\chi(\O_X)=\frac{7}{2}(q-2)-d-1.
\label{eq_mahboh}
\end{equation}
It easily follows that $K_X^2-\frac{9}{2}\chi(\O_X)<0$ if and only if $d>\frac{7}{2}(q-2)-1$. 

If we assume that $E$ is not not supersingular and we take the two additive rational vector fields $\delta_{C}$ and $\delta_{E}$ as in Examples \ref{es_multaddvf1} and \ref{es_multaddvf>1} we obtain another construction of a surface of general type lying over the Severi lines as above.
\end{example}

\section{Non-hyperelliptic surface fibrations}
\label{sec_nonhyper}

In this Section we prove a bound for the self intersection of the relative canonical bundle of a non-hyperelliptic surface.
This is a refinement of Theorem 3.1 of \cite{gusunzhou}, where the improvement is given by the fact that we allow the constant $c$ apperaing in the statement to take value up to $\frac{1}{2}$ instead of $\frac{1}{3}$ and we allow the base of the fibration to be any curve, and the proof is a small modification of their.
We will use this result in the next Section to prove Theorems \ref{teo_mio1} and \ref{teo_mio2}.

Before considering non-hyperlliptic surface fibrations we recall some facts about Harder-Narasimhan filtration of vector bundles over curves and (strongly) semi-stability.

\begin{definition}
\label{def_semi-}
A vector bundle $E$ on a curve $C$ is said to be (strongly) semi-stable if, for every sub-vector bundle $F\subseteq E$, $\mu(F)\leq\mu(E)$ where $\mu(E)$ is the slope of $E$ and is defined as $\frac{\deg(E)}{\rk(E)}$ (the same holds true for $f^*E$ where $f\colon C'\to C$ is a finite morphism of smooth curves).
\end{definition}

\begin{remark}
It is known (\cite{miyaoka} Proposition 3.2) that, given a finite separable morphism $f\colon C'\to C$, a vector bundle $E$ is semi-stable if and only if $f^*E$ is.
The "if" part holds true even for inseparable morphism, while there are many counterexample for the "only if" part (cf. \cite{langeart} Theorem 1).
That is why the notion of strongly semi-stability has been introduced.
It is also known that strongly semi-stability coincide with semi-stability over curves with globally generated tangent bundle (cf. \cite{mehta} Theorem 2.1 and Remark 2.2), e.g. over rational and elliptic curves.
\end{remark}

Recall that for a vector bundle $E$ there exists a unique Harder-Narasimhan filtration
\begin{equation}
0=E_0\subset E_1\subset \ldots \subset E_n=E,
\label{eq_hnfilt}
\end{equation}
such that the subsequent quotients $E_i/E_{i-1}$ are semi-stable vector bundles with slopes $\mu_i=\mu(E_i/E_{i-1})$ satisfying
\begin{equation}
\mu_{max}(E)=\mu_1>\ldots>\mu_n=\mu_{min}(E)
\label{eq_slopehn}
\end{equation}
which can be constructed inductively from the bottom ($E_1$ is the subbundle with lowest slope) or from the top ($E_{n-1}$ is the subbundle for which the quotient has maximal slope).
An easy calculation shows that 
\begin{equation}
\deg(E)=\sum_{i=1}^nr_i(\mu_i-\mu_{i+1}),
\label{lemma_deghn}
\end{equation}
where $r_i=\rk(E_i)$.

\begin{proposition}
\label{coro_belloneq}
Let $E$ be a vector bundle and $0=E_0\subset E_1 \subset\ldots\subset E_n=E$ be its Harder-Narasimhan filtration with slopes $\mu_1>\mu_2>\ldots>\mu_n$ and let $r_i=\rk E_i$.
Let $0=F_0\subset F_1\subset\ldots\subset F_m=E$ be another filtration of $E$ and let $\widetilde{r}_j$ and $\widetilde{\mu}_j$ numbers such that $\mu_{min}(F_j)\geq\widetilde{\mu}_j$ and $\rk(F_j)\geq\widetilde{r}_j$ for all $j\in\{1,\ldots,m\}$ and $\widetilde{r}_m=\rk(F_m)=r_n=\rk(E)$.
Then we have
\begin{equation}
\deg(E)\geq\sum_{i=1}^m\widetilde{r}_j(\widetilde{\mu}_j-\widetilde{\mu}_{j+1})
\label{eq:}
\end{equation} 
\end{proposition}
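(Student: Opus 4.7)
The plan is to bound $\deg E$ from below by a telescoping argument along the filtration $\{F_j\}$, and then repackage the resulting sum via Abel summation so that it matches the right-hand side.

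The two basic inputs are (i) the elementary inequality $\deg G\geq \rk(G)\cdot\mu_{\min}(G)$ for any vector bundle $G$ on $C$, which is immediate from the degree formula \ref{lemma_deghn} applied to the Harder-Narasimhan filtration of $G$ itself (each contribution $r_i(\mu_i-\mu_{i+1})$ is non-negative and what remains is $\rk(G)\cdot\mu_{\min}(G)$), and (ii) the standard fact $\mu_{\min}(Q)\geq\mu_{\min}(V)$ for any surjection $V\twoheadrightarrow Q$, which follows by duality from the well-known inequality $\mu_{\max}(W)\leq\mu_{\max}(V)$ for a sub-bundle $W\subseteq V$. Applying (ii) to the quotient $F_j\twoheadrightarrow F_j/F_{j-1}$, together with the hypothesis $\mu_{\min}(F_j)\geq\widetilde{\mu}_j$, and then (i) to $G=F_j/F_{j-1}$, I obtain
\[
\deg(F_j/F_{j-1})\geq\rk(F_j/F_{j-1})\cdot\widetilde{\mu}_j.
\]

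Summing over $j$ via $\deg E=\sum_{j=1}^m\deg(F_j/F_{j-1})$ and performing an Abel summation with the convention $\widetilde{\mu}_{m+1}=0$ (consistent with \ref{lemma_deghn}), the bound becomes
\[
\deg E\geq\sum_{j=1}^m\bigl(\rk(F_j)-\rk(F_{j-1})\bigr)\widetilde{\mu}_j=\sum_{j=1}^m\rk(F_j)(\widetilde{\mu}_j-\widetilde{\mu}_{j+1}).
\]
Combining $\rk(F_j)\geq\widetilde{r}_j$ with the (implicit) monotonicity $\widetilde{\mu}_j\geq\widetilde{\mu}_{j+1}$, each factor $\widetilde{\mu}_j-\widetilde{\mu}_{j+1}$ is non-negative, so $\rk(F_j)$ may be replaced by $\widetilde{r}_j$, giving the claimed inequality.

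The step I expect to require the most care is the validity of $\mu_{\min}(F_j/F_{j-1})\geq\mu_{\min}(F_j)$ over algebraically closed fields of positive characteristic, where the distinction between semi-stability and strong semi-stability is a frequent source of subtleties. Fortunately, this particular inequality is a formal consequence of the Harder-Narasimhan filtration alone and does not rely on strong semi-stability, so the whole argument goes through over any algebraically closed field.
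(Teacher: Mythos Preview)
Your proof is correct and takes a genuinely different route from the paper's. The paper compares the filtration $\{F_j\}$ directly against the Harder--Narasimhan filtration of $E$: for each $j$ it locates the smallest $i_j$ with $F_j\subseteq E_{i_j}$, deduces $\widetilde{\mu}_j\le\mu_{i_j}$ and $\widetilde{r}_j\le r_{i_j}$, and then interprets the difference $\deg(E)-\sum_j\widetilde{r}_j(\widetilde{\mu}_j-\widetilde{\mu}_{j+1})$ as the area of a staircase region in the $(\mu,r)$-plane trapped between the two staircases. Your argument, by contrast, never touches the Harder--Narasimhan filtration of $E$ at all: you work entirely with the successive quotients $F_j/F_{j-1}$, combine $\deg G\ge\rk(G)\,\mu_{\min}(G)$ with $\mu_{\min}(F_j/F_{j-1})\ge\mu_{\min}(F_j)$, and finish with Abel summation. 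This is cleaner and more self-contained; the paper's version has the virtue of making the defect term geometrically visible, which can be handy if one ever wants to analyse the equality case.

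Two small remarks. First, your justification of $\deg G\ge\rk(G)\,\mu_{\min}(G)$ is slightly loosely phrased: the terms $r_i(\mu_i-\mu_{i+1})$ for $i<n$ are non-negative, and the remaining $i=n$ term \emph{is} $\rk(G)\,\mu_{\min}(G)$; the way you wrote it could be read as claiming all terms are non-negative. Second, you are right that the monotonicity $\widetilde{\mu}_j\ge\widetilde{\mu}_{j+1}$ is needed and only implicit in the statement; the paper's area argument relies on it just as much as yours does, and in the sole application (to the filtration $\{\F_i\}$ with $\widetilde{\mu}_i=2\mu_i$) it is automatic. Note also that for $j=m$ you do not need $\widetilde{\mu}_m\ge 0$, since the hypothesis $\widetilde{r}_m=\rk(F_m)$ makes that replacement an equality.
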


\begin{proof}
For every $j\in\{1,\ldots,m\}$ there exists a unique $i\in\{1,\ldots,n\}$, call it $i_j$, such that $F_j\subseteq E_{i_j}$ and $F_j\nsubseteq E_{i_j-1}$: in particular we have a non-trivial morphism $F_j\to E_{i_j}/E_{i_j-1}$, which, thanks to Proposition 1.2.7 of \cite{huy}, implies that $\widetilde{\mu}_j\leq\mu_{min}(F_j)\leq \mu_{max}(E_{i_j}/E_{i_j-1})=\mu_{i_j}$.
We also clearly have that $\widetilde{r}_j\leq\rk(F_j)\leq r_{i_j}$.

\begin{figure}[h]%
\centering
\begin{tikzpicture}[xscale=1, yscale=1, every node/.style={scale=1}]
	\begin{axis}[ticks=none, xmin=-5, xmax=5, ymin=0, ymax=7, axis lines=middle, xlabel=$\mu$, ylabel=$r$, enlargelimits]
	
	\draw[white!0] (0,0) -- (0,7);

	\fill [gray!20] (4,0) -- (4,1) -- (2,1) -- (2,0) -- (4,0);
	\fill [gray!20] (3,1) -- (3,3) -- (1,3) -- (1,1) -- (3,1);
	\fill [gray!20] (1,2) -- (1,4) -- (-2,4) -- (-2,2) -- (1,2);
	\fill [gray!20] (-2,3) -- (-2,4) -- (-3,4) -- (-3,3) -- (-2,3);
	\fill [gray!20] (-1,4) -- (-1,6) -- (-4,6) -- (-4,4) -- (-1,4);

	\draw (4,0) -- (4,1);
	
	\draw (4,1) -- (3,1);
	
	\draw (3,1) -- (3,3);
	
	\draw[dashed] (3,3) -- (1,3);
	
	\draw[dashed] (1,3) -- (1,4);
	
	\draw (1,4) -- (-1,4);
	
	\draw (-1,4) -- (-1,6);
	
	\draw (-1,6) -- (-4,6);
	
	\filldraw  (4,1) circle (1pt);
	\filldraw  (3,3) circle (1pt);
	\filldraw  (1,4) circle (1pt);
	\filldraw  (-1,6) circle (1pt);
	
	\node at (-0.9,6.4) {$(\mu_n,r_n)$};
	\node at (4.1,1.4) {$(\mu_1,r_1)$};
	\node at (1.1,4.4) {$(\mu_{n-1},r_{n-1})$};
	\node at (3.1,3.4) {$(\mu_2,r_2)$};

	\draw (2,0) -- (2,1);
	
	\draw (2,1) -- (1,1);
	
	\draw (1,1) -- (1,2);
	
	\draw[dashed] (1,2) -- (-2,2);
	
	\draw[dashed] (-2,2) -- (-2,3);
	
	\draw (-2,3) -- (-3,3);
	
	\draw (-3,3) -- (-3,4);
	
	\draw (-3,4) -- (-4,4);
	
	\draw (-4,4) -- (-4,6);
	
	\filldraw  (2,1) circle (1pt);
	\filldraw  (1,2) circle (1pt);
	\filldraw  (-2,3) circle (1pt);
	\filldraw  (-3,4) circle (1pt);
	\filldraw  (-4,6) circle (1pt);
	
	\node at (1,0.6) {$(\widetilde{\mu}_1,\widetilde{r}_1)$};
	\node at (0,1.6) {$(\widetilde{\mu}_2,\widetilde{r}_2)$};
	\node at (-4,2.6) {$(\widetilde{\mu}_{m-2},\widetilde{r}_{m-2})$};
	\node at (-4,3.6) {$(\widetilde{\mu}_{m-1},\widetilde{r}_{m-1})$};
	\node at (-4,6.4) {$(\widetilde{\mu}_m,\widetilde{r}_m)$};

	\end{axis}
\end{tikzpicture}
\caption{}%
\label{fig_hn}%
\end{figure}

Viewing the points $(\mu_i,r_i)$ and $(\widetilde{\mu}_j,\widetilde{r}_j)$ in a plane as in Figure \ref{fig_hn} we see that 
\begin{equation}
\deg(E)-\sum_{i=1}^m\widetilde{r}_j(\widetilde{\mu}_j-\widetilde{\mu}_{j+1})=\sum_{i=1}^nr_i(\mu_i-\mu_{i+1})-\sum_{i=1}^m\widetilde{r}_j(\widetilde{\mu}_j-\widetilde{\mu}_{j+1})
\end{equation}
is the area of the grey part, hence the Proposition follows.
\end{proof}

In general, in positive characteristic, the tensor product of semi-stable vector bundles is not semi-stable. 
On the other hand the tensor product of strongly semi-stable vector bundle is strongly semi-stable and it is not hard to prove (cf. \cite{contitesi} Corollary 1.2.22) that
\begin{equation}
\label{eq_tensorstrong}
\mu_{min}(E\otimes F)=\mu_{min}(E)+\mu_{min}(F).
\end{equation}

\begin{theorem}
\label{teo_nefantican}
Let $\pi\colon\P(E)\to C$ a projective bundle over a smooth curve $C$. 
Suppose that all the quotients of the Harder-Narasimhan filtration of $E$ are strongly semi-stable. 
Then the rational divisor $\O_{\P(E)}(1)-\mu_{min}(E)F$ is nef\index{divisor!nef}, i.e. it intersects non-negatively every curve of $\P(E)$, where $F$ is a general fibre of $\pi$.
\end{theorem}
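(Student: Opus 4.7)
The plan is to verify nefness by checking the intersection with every irreducible curve $D\subset\P(E)$. Write $\pi\colon \P(E)\to C$ and split into cases according to whether the image $\pi(D)$ is a point or all of $C$. In the first case $D$ lies in some fiber $F\simeq\P^{r-1}$ where $\O_{\P(E)}(1)$ restricts to the hyperplane bundle; since $D.F=0$ one has $(\O_{\P(E)}(1)-\mu_{\min}(E)F).D=\O_{\P(E)}(1).D\geq 0$, so that case is immediate.

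Now suppose $\pi(D)=C$. Let $\nu\colon \tilde D\to D$ be the normalization, $g=\iota\circ\nu\colon \tilde D\to \P(E)$ the induced morphism, and $f=\pi\circ g\colon \tilde D\to C$. By the universal property of the projective bundle, $g$ corresponds to a quotient line bundle $f^*E\twoheadrightarrow L$, with $L=g^*\O_{\P(E)}(1)$. Denoting $e=\deg(\nu)$, the projection formula gives
\begin{equation*}
\deg L=e\cdot\bigl(\O_{\P(E)}(1).D\bigr),\qquad \deg f=e\cdot(F.D),
\end{equation*}
so the desired inequality $\bigl(\O_{\P(E)}(1)-\mu_{\min}(E)F\bigr).D\geq 0$ is equivalent to $\deg L\geq \mu_{\min}(E)\deg f$.

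The heart of the argument is the computation $\mu_{\min}(f^*E)=\deg(f)\,\mu_{\min}(E)$. Consider the Harder-Narasimhan filtration $0=E_0\subset\ldots\subset E_n=E$ with strongly semi-stable quotients $Q_i=E_i/E_{i-1}$ of slopes $\mu_i$. Factor $f$ as the composition of a separable morphism with a power of the $k$-linear Frobenius; separable pullback preserves semi-stability, while Frobenius pullback preserves strong semi-stability by definition. Hence $0\subset f^*E_1\subset\ldots\subset f^*E_n=f^*E$ is the Harder-Narasimhan filtration of $f^*E$, with successive quotients $f^*Q_i$ (still strongly semi-stable) of slopes $\deg(f)\mu_i$. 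In particular $\mu_{\min}(f^*E)=\deg(f)\mu_{\min}(E)$. The surjection $f^*E\twoheadrightarrow L$ induces a filtration on $L$ by images of the $f^*E_i$, whose successive quotients are quotients of the semi-stable $f^*Q_i$ and thus have slopes $\geq\deg(f)\mu_i\geq\deg(f)\mu_{\min}(E)$. Therefore $\deg L\geq \deg(f)\mu_{\min}(E)$, which completes the proof.

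The main obstacle is precisely the step where one invokes strong semi-stability: for a general inseparable morphism $f$, semi-stability alone of the HN quotients of $E$ would not guarantee that $f^*Q_i$ remain semi-stable, and the identity $\mu_{\min}(f^*E)=\deg(f)\mu_{\min}(E)$ could fail (cf. the counterexamples of Langer alluded to in the excerpt). Strong semi-stability together with the factorization of $f$ through Frobenius is exactly what bypasses this pathology.
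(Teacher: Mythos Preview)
Your argument is correct and is precisely the adaptation of Miyaoka's proof of Theorem~3.1 (1)$\Rightarrow$(2) that the paper invokes. Two small remarks: the normalization $\nu$ is birational, so $e=1$ and can be dropped; and for the factorization step you should take the purely inseparable part closest to $C$ (i.e.\ $\tilde D\xrightarrow{\text{sep}}D'\xrightarrow{F^m}C$), so that you first pull back the strongly semi-stable $Q_i$ by Frobenius (still strongly semi-stable by definition) and then by a separable map (semi-stable) --- in the opposite order you would additionally need that separable pullback preserves \emph{strong} semi-stability, which is true but is an extra lemma.
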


\begin{proof}
It is not difficult to see that the proof of Theorem 3.1 part 1)$\Rightarrow$2) adapts smoothly to this case: for a detailed proof see \cite{contitesi} Theorem 1.3.3.
\end{proof}

Consider a non-hyperelliptic surface fibration $f\colon X\to C$ with nef relative canonical bundle and let $g$ be the arithmetic genus of a generic fibre $F$.
Let $E=f_*\omega_{X/C}$,
\begin{equation}
0=E_0\subseteq E_1\subseteq\ldots\subseteq E_{n-1}\subseteq E_n=E
\label{eq_HNcan2}
\end{equation}
be the Harder-Narasimhan filtration of $E$, suppose that all the quotients $E_i/E_{i-1}$ are strongly semi-stable and let $\mu_i$ and $r_i$ be as above.

Consider the generically surjective morphism $\Phi_i\colon f^*E_i\to\omega_{X/C}$: then the image of $\Phi_i$ is $\im(\Phi_i)=I_{Z_i}\otimes_{O_X}\O_X(-Z_i(E_i))\otimes_{\O_X}\O_X(K_{X/C})$ where $I_{Z_i}$ is the ideal sheaf of a  subscheme of codimension $2$ and $Z_i=Z(E_i)$ is an effective divisor.
We recall that, cf. \cite{xiao} Definition page 452 immediately after Lemma 2, $Z_i$ is called the fixed part of $K_{X/C}$ with respect to $E_i$, $M_i=K_{X/C}-Z_i$ is called the moving part of $K_{X/C}$ with respect to $E_i$ and we define $N_i=M_i-\mu_iF$ (notice that this is a rational divisor).
It is clear by definition that, restricting to a general fibre $F$, $\restr{f^*E_i}{F}$ defines a linear subsystem of $|\restr{K_{X/C}}{F}|$ of dimension equal to $r_i-1$, such that $\restr{M_i}{F}$ and $\restr{Z_i}{F}$ are respectively the moving and the fixed part.
Moreover the degree $d_i$ of the linear system $\restr{M_i}{F}$ is the intersection number $M_i.F=N_i.F$. 

\begin{remark}
\label{propo_xiaonef}
Because all the subsequent quotient of the Harder-Narasimhan filtration of $f_*\omega_{X/C}$ are strongly semi-stable, the (rational) line bundle $N_i$ defined above is nef.
Indeed, essentially by definition of $Z_i$, we know that $\Phi_i\colon f^*E_i\to \O_X(K_{X/C}-Z_i)$ is surjective in codimension $1$: hence it corresponds to a rational map $\phi_i\colon X\dashrightarrow \P(E_i)$ such that $\pi_i\circ \phi=f$, where $\pi_i\colon\P(E_i)\to C$ is the natural projection.
Moreover we have that $\O_X(K_{X/C}-Z_i)=\phi_i^*\O_{\P(E_i)}(1)$.
We notice that $N_i=\phi_i^*(\O_{P(E_i)}(1)-\mu_iF_i)$ where $F_i$ is a general fibre of the projection $\pi_i\colon \P(E_i)\to C$.
By Theorem \ref{teo_nefantican} we easily derive the nefness of $N_i$.
\end{remark}

It is clear, by the fact that restriction $\restr{\phi_i}{F}\colon F\to \P^{r_i-1}$ is non degenerate, that, if $r_i>1$ the image of $\phi_i$ is a surface: denote by $X_i$ a minimal smooth model of this surface.
Moreover let $\delta_i$ be the degree of $\phi_i\colon X\dashrightarrow X_i$ and let $g_i$ be the arithmetic genus of the generic fibre of the surface fibration $X_i\to C$ induced by the projection $\pi_i\colon \P(E_i)\to C$.
It is immediate by construction that $\phi_i$ factors through $\phi_{i+1}$, in particular $\delta_{i+1}|\delta_i$.
If $r_1=1$ (notice that $r_i>r_{i-1}\geq 1$, so that only $r_1$ can be equal to one), then it is clear that $d_1=0$ (indeed in this case we have $\P(E_1)=C$ and $N_1$ is the pull-back of a line bundle on $C$) and we define $\delta_1=\infty$ and $g_1=\infty$.

\begin{proposition}
\label{propo_xiaorelcan}
With the notation as above, we have for any set of indexes $1\leq i_1\leq \ldots\leq i_{k-1}\leq i_k\leq n$
\begin{equation}
K_{X/C}^2\geq \sum_{j=1}^k (d_{i_j}+d_{i_{j+1}})(\mu_{i_j}-\mu_{i_{j+1}}),
\label{eq_xiaopos}
\end{equation}
where $i_{k+1}=n+1$,  $\mu_{n+1}$ is defined to be zero and $d_{n+1}=K_{X/C}.F=2g-2$.
\end{proposition}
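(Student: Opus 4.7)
The plan is to carry out the classical Xiao-type telescoping argument, exploiting the nef divisors $N_i=K_{X/C}-Z_i-\mu_i F$ from Remark \ref{propo_xiaonef}. The whole proof reduces to repeated applications of the fact that the intersection of a nef divisor with an effective one is non-negative, once a monotonicity property of the fixed parts $Z_i$ is established.

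\textbf{Step 1: monotonicity of the fixed parts.} From the inclusion $E_i\subseteq E_{i+1}$ one gets an inclusion of images $I_{Z_i}\otimes\O_X(M_i)\hookrightarrow I_{Z_{i+1}}\otimes\O_X(M_{i+1})$ inside $\omega_{X/C}$; comparing divisorial parts forces $Z_i\geq Z_{i+1}$. Consequently
\[
N_{i_{j+1}}-N_{i_j}=(Z_{i_j}-Z_{i_{j+1}})+(\mu_{i_j}-\mu_{i_{j+1}})F
\]
is the sum of an effective divisor and a non-negative multiple of the nef class $F$. This is the only non-formal ingredient of the argument.

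\textbf{Step 2: telescoping between consecutive levels.} Using $N_{i_{j+1}}$ nef and $(Z_{i_j}-Z_{i_{j+1}})$ effective, expand
\[
N_{i_{j+1}}^2=N_{i_{j+1}}\cdot N_{i_j}+(\mu_{i_j}-\mu_{i_{j+1}})d_{i_{j+1}}+N_{i_{j+1}}\cdot(Z_{i_j}-Z_{i_{j+1}})\geq N_{i_{j+1}}\cdot N_{i_j}+(\mu_{i_j}-\mu_{i_{j+1}})d_{i_{j+1}};
\]
similarly, using $N_{i_j}$ nef,
\[
N_{i_{j+1}}\cdot N_{i_j}\geq N_{i_j}^2+(\mu_{i_j}-\mu_{i_{j+1}})d_{i_j}.
\]
Combining the two gives
\[
N_{i_{j+1}}^2\geq N_{i_j}^2+(\mu_{i_j}-\mu_{i_{j+1}})(d_{i_j}+d_{i_{j+1}}).
\]

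\textbf{Step 3: top step and conclusion.} Write $K_{X/C}=N_{i_k}+Z_{i_k}+\mu_{i_k}F$ with $K_{X/C}$ nef, $Z_{i_k}$ effective, and $K_{X/C}\cdot F=2g-2=d_{n+1}$. The same two-step trick yields
\[
K_{X/C}^2\geq K_{X/C}\cdot(N_{i_k}+\mu_{i_k}F)\geq N_{i_k}^2+\mu_{i_k}(d_{i_k}+d_{n+1})=N_{i_k}^2+(\mu_{i_k}-\mu_{i_{k+1}})(d_{i_k}+d_{i_{k+1}}),
\]
under the conventions $\mu_{i_{k+1}}=\mu_{n+1}=0$ and $d_{i_{k+1}}=d_{n+1}$. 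Iterating Step 2 down to $N_{i_1}^2$ and using the standard fact that a nef $\mathbb{Q}$-divisor on a smooth projective surface satisfies $N_{i_1}^2\geq 0$ (obtained as $\lim_{\varepsilon\to 0}(N_{i_1}+\varepsilon H)^2$ for an ample $H$) gives the desired inequality.

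The main obstacle is really just Step 1: once the monotonicity $Z_i\geq Z_{i+1}$ is in hand, everything else is formal manipulation with $F^2=0$, the identities $N_i\cdot F=d_i$, $K_{X/C}\cdot F=2g-2$, and nef-times-effective positivity. The case $r_1=1$ is harmless since $d_1=0$ and $N_1$ is a pull-back from $C$, so the corresponding term is either trivial or absorbed in $N_{i_1}^2\geq 0$, and repeated indices $i_j=i_{j+1}$ contribute zero.
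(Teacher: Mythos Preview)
Your proof is correct and is precisely the standard proof of Xiao's Lemma (\cite{xiao} Lemma 2), which the paper simply cites without reproducing. You have unpacked the argument that the paper treats as a black box: the monotonicity $Z_i\geq Z_{i+1}$, the nefness of $N_i$ (which in this positive-characteristic setting requires the strongly semi-stable hypothesis via Remark \ref{propo_xiaonef}, as you note), and the telescoping computation are exactly the ingredients of Xiao's original proof.
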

\begin{proof}
This is an application of Xiao's Lemma (cf. \cite{xiao} Lemma 2).
\end{proof}

\begin{remark}
\label{rem_slopexi}
Notice that  Proposition \ref{propo_xiaorelcan} leads to a proof of slope inequality in our setting (recall that, throughout this Section, we are assuming that the relative canonical divisor is nef and all the subsequent quotients of the Harder-Narasimhan filtration are strongly semi-stable) for surface fibrations  in positive characteristic using the same argument exposed by Xiao in \cite{xiao} over the complex numbers.
A simple modification of the same Proposition (cf. \cite{gu2019slope}  Section 2.1 and \cite{contitesi} Section 4.1) extends the proof of the slope inequality for any relatively minimal surface fibration for which the subsequent quotients of the Harder-Narasimhan filtration are strongly semi-stable (cf. \cite{contitesi} Theorem 4.1.15). 
\end{remark}

Now consider the second multiplication map
\begin{equation}
\sigma\colon f_*{\omega_{X/C}}\otimes_{\O_C} f_*{\omega_{X/C}}\to f_*{\omega_{X/C}^2}
\label{eq_secondmult}
\end{equation}
and denote by $\F$ its image: by Max Noether's Theorem for integral projective Gorenstein non-hyperelliptic curves, cf. \cite{contiero} Theorem 1, it is generically surjective.
In particular we have $\deg(\F)\leq \deg(f_*\omega_{X/C}^2)$.

\begin{lemma}
\label{lemma_degs2f}
Let $f\colon X\to C$ be a non-hyperelliptic relatively minimal surface fibration.
We have 
\begin{equation}
K_{X/C}^2\geq \deg(\F)-\deg(f_*\omega_{X/C}).
\end{equation}
\end{lemma}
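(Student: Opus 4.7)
The plan is to bound $\deg(\F)$ by $\deg(f_*\omega_{X/C}^{\otimes 2})$ (already observed in the paragraph above the lemma via Max Noether) and then to establish via Riemann--Roch that $\deg(f_*\omega_{X/C}^{\otimes 2})=K_{X/C}^2+\deg(f_*\omega_{X/C})$; subtracting the two yields the lemma at once.

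For the identity the key input is the vanishing $R^1f_*\omega_{X/C}^{\otimes 2}=0$. By relative Serre duality this sheaf is isomorphic to $(f_*\omega_{X/C}^{-1})^{\vee}$, which vanishes in our relatively minimal non-hyperelliptic setting because $K_{X/C}$ is nef (Arakelov) and $\omega_{F_c}^{-1}$ admits no global section on any fibre $F_c$. Granted this vanishing, the Leray spectral sequence collapses to $\chi(f_*\omega_{X/C}^{\otimes 2})=\chi(X,\omega_{X/C}^{\otimes 2})$. Expanding the right-hand side via Riemann--Roch on $X$, with the adjunction $K_X=K_{X/C}+f^*K_C$ and $K_{X/C}\cdot F=2g-2$, gives
\begin{equation*}
\chi(X,\omega_{X/C}^{\otimes 2})=\chi(\O_X)+K_{X/C}^2-4(g-1)(g_C-1),
\end{equation*}
while Riemann--Roch on $C$ applied to the rank-$(3g-3)$ bundle $f_*\omega_{X/C}^{\otimes 2}$ rewrites the left-hand side as $\deg(f_*\omega_{X/C}^{\otimes 2})+(3g-3)(1-g_C)$. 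Finally, the standard identity
\begin{equation*}
\chi(\O_X)=\deg(f_*\omega_{X/C})+(g-1)(g_C-1),
\end{equation*}
obtained from Leray applied to $\O_X$ together with $f_*\O_X=\O_C$ and relative Serre duality $R^1f_*\O_X\cong(f_*\omega_{X/C})^{\vee}$, closes the loop: after substitution the contributions of order $(g-1)(g_C-1)$ cancel and one reads off $\deg(f_*\omega_{X/C}^{\otimes 2})=K_{X/C}^2+\deg(f_*\omega_{X/C})$, as desired.

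The one delicate point is the vanishing of $R^1f_*\omega_{X/C}^{\otimes 2}$: it is immediate at fibres all of whose components meet $K_{X/C}$ positively, but fibres containing $(-2)$-configurations of rational components require one to rule out sections of $\omega_{F_c}^{-1}$ by hand, using the Gorensteinness of the fibres together with the global nefness (and in fact bigness) of $K_{X/C}$. Modulo this check the remainder of the argument is pure Riemann--Roch bookkeeping.
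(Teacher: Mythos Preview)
Your approach is essentially the paper's: establish $R^1f_*\omega_{X/C}^{\otimes 2}=0$, then combine Leray and Riemann--Roch on $X$ and on $C$ to compare $\deg(f_*\omega_{X/C}^{\otimes 2})$ with $K_{X/C}^2$ and $\deg(f_*\omega_{X/C})$, and finally use $\deg(\F)\le\deg(f_*\omega_{X/C}^{\otimes 2})$.

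There is, however, a genuine gap in positive characteristic. Your ``standard identity''
\[
\chi(\O_X)=\deg(f_*\omega_{X/C})+(g-1)(g_C-1)
\]
relies on the claimed isomorphism $R^1f_*\O_X\cong(f_*\omega_{X/C})^\vee$. Over fields of positive characteristic $R^1f_*\O_X$ need not be locally free: it can carry a torsion subsheaf $T$ of length $l\ge 0$, and relative duality only identifies the locally free quotient $R^1f_*\O_X/T$ with $(f_*\omega_{X/C})^\vee$. The correct formula is therefore
\[
\chi(\O_X)=\deg(f_*\omega_{X/C})+(g-1)(g_C-1)-l,
\]
which the paper derives explicitly. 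Tracking this through your computation gives
\[
K_{X/C}^2=\deg(f_*\omega_{X/C}^{\otimes 2})-\deg(f_*\omega_{X/C})+l,
\]
so your asserted \emph{equality} $\deg(f_*\omega_{X/C}^{\otimes 2})=K_{X/C}^2+\deg(f_*\omega_{X/C})$ fails in general. Fortunately $l\ge 0$, so the desired \emph{inequality} of the lemma survives; but you should not present the intermediate step as an identity.

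On the vanishing $h^0(F,\omega_F^{-1})=0$ that you defer: the paper does carry this out. Writing a hypothetical section as supported on a subcurve $B\subset F$ (necessarily a union of $(-2)$-curves, by relative minimality) and setting $A=F-B$, one finds $-A.B=\deg_B\O_B(-K_F-A)\ge 0$, contradicting Zariski's Lemma unless $B=0$; and $B=F$ is impossible since $g\ge 2$. This is exactly the ``by hand'' check you allude to, and it is worth including.
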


\begin{proof}
First of all we notice that $R^1f_*\omega_{X/C}^2=0$ where, for a coherent sheaf $E$, $R^1f_*E$ denotes the first higher pushforward of $E$: by the base change Theorem (cf. \cite{hart} Corollary III.12.9) we know that it is enough  to prove that $h^1(F,\restr{\omega_{X/C}^2}{F})=0$ for every fibre $F$.
By Riemann-Roch Theorem for (possibly singular) curves on a surface (cf. \cite{barth} Theorem III.3.1) and the adjunction Formula, this is equivalent to the vanishing of $h^0(F,-K_F)$.
Suppose that $h\in H^0(F,-K_F)$ is a section and let $A$ be the maximal closed subscheme of $F$ such that $\restr{h}{A}\equiv 0$, i.e. $h\in H^0(B,\O_B(-K_F-A))$ with $F=A+B$ (cf. Equation (4) of \cite{barth} Section II.1).
Because the fibration $f$ is relatively minimal, $B$ can contain only $(-2)$-curves: indeed on all the other components the degree of the restriction of $-K_F$ is strictly negative and the restriction of $h$ has to be identically zero.
Moreover $-K_F.A<0$ because otherwise $f$ would be a genus-one fibration, which is excluded by hypothesis, and $A$ and $B$ can not be linearly equivalent.

In particular $h$ defines the following exact sequence
\begin{equation}
0\to\O_B\to\O_B(-K_F-A)\to Q\to 0
\label{eq_exactrelativelyminimal}
\end{equation}
where $Q$ is a finitely supported sheaf, which means $\chi(Q)=h^0(B,Q)\geq0$.
By Riemann-Roch we have
\begin{equation}
-A.B=\deg(\O_B(-K_F-A))=\chi(\O_B(-K_F-A))-\chi(\O_B)=h^0(B,Q)\geq 0
\label{eq_rrabf}
\end{equation}
which, unless $B=0$, is in contradiction with Zariski's Lemma for surface fibrations (\cite{badescu} Corollary 2.6).
In particular $h$ has to be identically zero.

By the Leray spectral sequence (cf. \cite{badescu} page 103), relative duality (ibidem) and Riemann-Roch Theorem, we have that 
\begin{equation}
\begin{split}
\chi(\O_X)=\chi(f_*\O_X)&-\chi(R^1f_*\O_X)=\\
=-\deg(R^1f_*\O_X)-(\rk(R^1&f_*\O_X)-1)\chi(\O_C)=\\
=-\deg(R^1f_*\O_X/T(R^1f_*\O_X))&-l-(g-1)\chi(\O_C)=\\
=\deg(f_*\omega_{X/C})-(g-1)&\chi(\O_C)-l
\end{split}
\label{eq_pushforstructure}
\end{equation}
(where $l$ denotes the dimension of the torsion subsheaf $T(R^1f_*\O_X)$ of $R^1f_*\O_X$) and

\begin{equation}
\chi(\omega_{X/C}^2)=\chi(f_*\omega_{X/C}^2)=\deg(f_*\omega_{X/C}^2)+3(g-1)\chi(\O_C).
\label{eq_pushforsym}
\end{equation}

Using Riemann-Roch Theorem on $X$ we have
\begin{equation}
\begin{split}
\chi(\omega_{X/C}^2)=\chi(\O_X)&+K_{X/C}.(2K_{X/C}-K_X)=\\
=\chi(\O_X)+K_{X/C}^2-K_{X/C}.f^*K_C&=\chi(\O_X)+K_{X/C}^2+4(g-1)\chi(\O_C).\hspace{-10pt}
\end{split}
\label{eq_pushforrr}
\end{equation}

Combining Equations \ref{eq_pushforstructure}, \ref{eq_pushforsym} and \ref{eq_pushforrr} we get
\begin{equation}
\begin{split}
K_{X/C}^2=\chi(\omega_{X/C}^2)-\chi(\O_X)-4(g-1)\chi(\O_C)=\\
=\deg(f_*\omega_{X/C}^2)-\deg(f_*\omega_{X/C})+l\geq \deg(\F)-\deg(f_*\omega_{X/C})
\end{split}
\label{eq_pushforfin}
\end{equation}
and the Lemma is proven.
\end{proof}

Let's consider the following filtration for $\F$:
\begin{equation}
0=\F_0\subseteq \F_1\subseteq \F_2\subseteq\ldots\subseteq\F_{n-1}\subseteq\F_n=\F
\label{eq_filtrf}
\end{equation}
where $\F_i$ is the image of $\sigma$ restricted to $E_i\otimes E_i$.

\begin{lemma}
\label{lemma_rankfi}
In the above situation, if $r_i\geq 2$, we have
\begin{equation}
\rk(\F_i)\geq\min\{3r_i-3,2r_i+g_i-1\}\geq 2r_i-1
\label{eq_rkfi2}
\end{equation} 
and, if $\phi_i$ is birational, $\rk(\F_i)\geq 3r_i-3$. 
If $r_1=1$ we have $\rk(\F_1)\geq 2r_1-1=1$.
\end{lemma}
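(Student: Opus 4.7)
The rank of $\F_i$ is computed on a generic fiber. Since $f_*\omega_{X/C}$ is torsion-free on $C$ and the Harder--Narasimhan filtration is by saturated subsheaves, the fiber of $E_i$ at a generic point $\eta\in C$ injects into $H^0(F,\omega_F)$, where $F$ is a generic fiber of $f$. Set $W_i\subseteq H^0(F,\omega_F)$ to be this image; it has dimension $r_i$, and $\rk(\F_i)$ equals the dimension of the image $W_i\cdot W_i$ of the multiplication map $W_i\otimes W_i\to H^0(F,\omega_F^{\otimes 2})$. Writing $\restr{M_i}{F}$ for the moving part of $W_i$ on $F$, we may replace $\omega_F$ by the globally generated line bundle $L=\O_F(\restr{M_i}{F})$ without loss: the multiplication factors through $H^0(F,L^{\otimes 2})$ and its image on $\omega_F^{\otimes 2}$ has the same dimension since multiplication by the fixed part is injective.

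The case $r_1=1$ is immediate: $W_1$ is spanned by a single nonzero section $s$, so $W_1\cdot W_1$ is spanned by $s^2\neq 0$ and has dimension $1=2r_1-1$. Assume now $r_i\ge 2$. Pick a general $2$-dimensional subspace $P\subseteq W_i$ and let $L_P$ be the line bundle associated to the base-point-free part of $P$, giving a morphism $\pi_P\colon F\to\P^1$. The base-point-free pencil trick applied to $P$ and $L$ gives
\begin{equation*}
0\to H^0(F,L\otimes L_P^{-1})\to P\otimes H^0(F,L)\to H^0(F,L^{\otimes 2})
\end{equation*}
and, restricting the middle term to $P\otimes W_i$, the kernel of $P\otimes W_i\to W_i\cdot P\subseteq W_i\cdot W_i$ has dimension at most $1$ (it is contained in $H^0(F,L\otimes L_P^{-1})$, and a generic $P$ makes this at most one-dimensional, spanned by the relation $s_1\otimes s_2-s_2\otimes s_1$ coming from the constant function). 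Hence $\dim(W_i\cdot P)\ge 2r_i-1$, giving the universal bound $\rk(\F_i)\ge 2r_i-1$.

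For the sharper bound, consider the morphism $\phi=\restr{\phi_i}{F}\colon F\to\overline{F}\subseteq\P^{r_i-1}$ onto a non-degenerate irreducible curve whose normalization has genus $g_i$. The image of $W_i\cdot W_i$ in $H^0(F,L^{\otimes 2})$ is, after pulling back, the image of $S^2 H^0(\P^{r_i-1},\O(1))$ in $H^0(\widetilde{F},\phi^*\O(2))$, where $\widetilde{F}$ is the normalization of $\overline{F}$. Two complementary arguments bound this. First, by iterating the pencil-trick argument for three generic sections $s_1,s_2,s_3\in W_i$ whose zero divisors are pairwise disjoint (which is possible as soon as $\phi$ is birational, by the generic-position theorem), the pairwise intersections $s_jW_i\cap s_kW_i$ are exactly one-dimensional (spanned by $s_js_k$), yielding $\dim(s_1W_i+s_2W_i+s_3W_i)=3r_i-3$. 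This gives $\rk(\F_i)\ge 3r_i-3$ when $\phi$ is birational. Second, when $\phi$ has positive degree onto $\overline{F}$, one uses Riemann--Roch on $\widetilde{F}$ together with the pencil trick on the pull-back linear system: the image of $S^2 H^0(\overline{F},\O(1))$ in $H^0(\widetilde{F},\phi^*\O(2))$ has dimension at least $2r_i-1+g_i$, since the kernel of the multiplication $P\otimes H^0(\overline{F},\O(1))\to H^0(\widetilde{F},\phi^*\O(2))$ receives only $H^0(\overline{F},\O)=k$ instead of a larger space allowed by the higher genus contribution.

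The main obstacle is the bookkeeping around the base locus of $W_i$ on $F$ and the possibly singular image $\overline{F}$: one must verify that the kernel of the pencil trick is controlled in the above ways when replacing $H^0(F,L)$ by the (possibly proper) subspace $W_i$, and that the ``generic position" of sections of $W_i$ holds despite the map not being given by a complete linear series. Once these technical points are handled, combining the two bounds gives $\rk(\F_i)\ge\min\{3r_i-3,\,2r_i+g_i-1\}$, and since for $r_i\ge 2$ both quantities are at least $2r_i-1$, the final chain of inequalities follows.
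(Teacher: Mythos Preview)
Your overall approach matches the paper's: restrict to a general fibre $F$, identify $\rk(\F_i)$ with the dimension of the image $W_i\cdot W_i$ of the multiplication map for the linear series $W_i\subseteq H^0(F,\omega_F)$, and then prove a curve-level bound. The paper simply cites this curve-level bound from the author's thesis, so you are attempting to supply the details the paper omits.

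Your arguments for $r_1=1$ and for the universal bound $2r_i-1$ are correct. There is, however, a genuine gap in your treatment of the bound $2r_i+g_i-1$. Your pencil-trick computation shows only that the image of $P\otimes W_i$ has dimension $2r_i-1$; the sentence ``receives only $H^0(\overline{F},\O)=k$ instead of a larger space allowed by the higher genus contribution'' does not produce the extra $g_i$. In fact the $+g_i$ does not come from a separate pencil-trick argument at all: it arises from the same Castelnuovo-type argument you use in the birational case, combined with a degree estimate. Concretely, general position of a hyperplane section of $\overline{F}\subseteq\P^{r_i-1}$ gives $\dim(W_i\cdot W_i)\ge r_i+\min(d,2r_i-3)$ where $d=\deg\overline{F}$; then Riemann--Roch (non-special case) or Clifford (special case) on the normalization $\widetilde{F}$ of genus $g_i$ yields $d\ge r_i+g_i-1$, whence $r_i+\min(d,2r_i-3)\ge\min(2r_i+g_i-1,3r_i-3)$. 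So the two bounds $3r_i-3$ and $2r_i+g_i-1$ are two regimes of a single Castelnuovo estimate, not two independent arguments.

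A second point you should address rather than defer: the ``generic-position theorem'' you invoke for the three-section argument can fail in positive characteristic (strange curves; and even some non-strange curves in characteristic~$2$, cf.\ Rathmann). Since the paper works in arbitrary positive characteristic, you need to verify that the image curves $\overline{F}$ arising here are not strange, or else replace the general-position step by an argument valid in this setting (this is presumably what the thesis reference handles).
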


\begin{proof}
This follows easily from a similar bound for the dimension of the  image via the multiplication map of a (not necessarily complete) subcanonical linear system on an integral Gorenstein curve, a detailed proof of which can be found in \cite{contitesi} Corollary 1.6.9 (cf. also ibidem Remarks 1.6.10 and 1.6.11).
\end{proof}

Denote by $I_1=\{i\ |\ \delta_i=1\}$, $I_2=\{i\ |\ \delta_i=2,\ r_i\geq g_i+2\}$, $I_3=\{ i |\ \delta_i=2,\ r_i< g_i+2\}$  and $I_4=\{i\ |\ \delta_i\geq 3\}$. 
If $r_1=1$ we have defined $\delta_1=\infty$: in this case $1$ is an element of $I_4$.
Notice also that $n\in I_1$: indeed, by construction, we have that $\restr{\phi_n}{F}$ is the canonical morphism of $F$ which is an embedding (indeed for an integral Gorenstein non-hyperelliptic projective curve the canonical divisor defines an embedding cf. \cite{hartgen} Theorem 1.6), in particular $\phi_n$  is birational.

We rephrase Lemma \ref{lemma_rankfi} as
\begin{equation}
\rk(\F_i)\geq
\begin{cases}
2r_i-1 & i\in I_4;\\
3r_i-3 &  i\in I_3;\\
2r_i+g_i-1 &  i\in I_2;\\
3r_i-3 & i\in I_1.
\end{cases}
\label{eq_rkfi}
\end{equation}
where for $i\in I_4$ we are using the weaker inequality $\rk(\F_i)\geq 2r_i-1$, for $i \in I_3\cup I_2$ we are using the inequalities which come from the definition of $I_3$ and $I_2$ and for $i\in I_1$ we are using the stronger inequality $\rk(\F_i)\geq 3r_i-3$ which holds because for these indexes $\phi_i$ is birational.
Notice that we have $\rk(\F_n)=\rk(\F)=3r_n-3=3g-3$: this is clear because of the generic surjectivity of the morphism in \ref{eq_secondmult} and is needed for the application of Corollary \ref{coro_belloneq} in order to estimate the degree of $\F$.
Denote  by
\begin{equation}
\widetilde{r}_i=
\begin{cases}
2r_i-1 & i\in I_4\\
3r_i-3 & i\in I_3;\\
2r_i+g_i-1 & i\in I_2;\\
3r_i-3 & i\in I_1.
\end{cases}
\label{eq_ritilde}
\end{equation}

\begin{lemma}
\label{lemma_degfi}
In the same situation as in Lemma \ref{lemma_rankfi} we have
\begin{equation}
d_i\geq \delta_i\min\{2(r_i-1),r_i+g_i-1\}\geq\delta_i(r_i-1)
\label{eq_degfi2}
\end{equation}
and, if $\phi_i$ is birational, $d_i\geq 2(r_i-1)$.
\end{lemma}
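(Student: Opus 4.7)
The plan is to restrict everything to a general fibre $F$ of $f$ and reduce the estimate to a classical bound for a line bundle on the image curve, obtained by combining Clifford's theorem with Riemann--Roch. First I would record what the construction preceding Proposition \ref{propo_xiaorelcan} yields on $F$: the subspace $\restr{E_i}{F}\subseteq H^0(F,K_F)$ has dimension $r_i$, its moving part is $\restr{M_i}{F}$ of degree $d_i$, and for $r_i\geq 2$ the restricted rational map $\restr{\phi_i}{F}\colon F\dashrightarrow\P^{r_i-1}$ factors as
\[
F\xrightarrow{\,\delta_i\,}F_i\hookrightarrow\P^{r_i-1},
\]
where $F_i$ is the general fibre of the induced fibration $X_i\to C$, a non-degenerate smooth curve of genus $g_i$. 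Writing $L_i$ for the pullback of $\O_{\P^{r_i-1}}(1)$ to $F_i$, one has $h^0(F_i,L_i)\geq r_i$ and $d_i=\delta_i\deg_{F_i}(L_i)$.

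The first inequality then follows from a dichotomy on $F_i$. If $L_i$ is special, Clifford's inequality gives
\[
\deg_{F_i}(L_i)\geq 2(h^0(L_i)-1)\geq 2(r_i-1).
\]
Otherwise $L_i$ is non-special and Riemann--Roch reads $h^0(L_i)=\deg_{F_i}(L_i)-g_i+1$, whence $\deg_{F_i}(L_i)\geq r_i+g_i-1$. In either case $\deg_{F_i}(L_i)\geq \min\{2(r_i-1),\,r_i+g_i-1\}$ and multiplication by $\delta_i$ produces the main bound. The coarser estimate $\min\{2(r_i-1),\,r_i+g_i-1\}\geq r_i-1$ is immediate from $r_i\geq 1$ and $g_i\geq 0$.

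The sharper statement $d_i\geq 2(r_i-1)$ when $\phi_i$ is birational (so $\delta_i=1$) I would instead prove directly on $F$: since $\restr{E_i}{F}$ is a subseries of $|K_F|$, the moving part satisfies $K_F-\restr{M_i}{F}\geq 0$, hence by Serre duality
\[
h^1(F,\restr{M_i}{F})=h^0(F,K_F-\restr{M_i}{F})\geq 1.
\]
Therefore $\restr{M_i}{F}$ is automatically special, and Clifford applied on the non-hyperelliptic integral Gorenstein curve $F$ yields $d_i=\deg\restr{M_i}{F}\geq 2(r_i-1)$ at once.

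The main technical point I anticipate is the legitimacy of Clifford's theorem and Serre duality on the possibly singular integral Gorenstein general fibre $F$; this is, however, the same issue already handled for Lemma \ref{lemma_rankfi} through the results of \cite{contitesi} Section 1.6, and that machinery transfers verbatim to the present argument.
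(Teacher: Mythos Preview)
Your argument is correct and coincides with the approach the paper defers to \cite{contitesi} Corollary 1.6.7: restrict to a general fibre, factor through the image curve $F_i$, and apply the Clifford/Riemann--Roch dichotomy to $L_i$ there; for the birational case exploit directly on $F$ that $\restr{M_i}{F}\leq K_F$ forces speciality. One small slip worth flagging: in positive characteristic the general fibre $F_i$ of $X_i\to C$ need not be \emph{smooth}, only integral Gorenstein (as a Cartier divisor on the smooth surface $X_i$); but since you already invoke Clifford and Riemann--Roch in that generality via \cite{contitesi} Section~1.6, the argument goes through unchanged with $g_i$ read as the arithmetic genus.
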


\begin{proof}
Similarly as for Lemma \ref{lemma_rankfi}, the statement easily follows from a similar bound of the degree of a  (not necessarily complete) subcanonical linear system on an integral Gorenstein curve, a detailed proof of which can be found in \cite{contitesi} Corollary 1.6.7 (cf. also ibidem Remarks 1.6.8 and 1.6.11).
\end{proof}

We rephrase Lemma \ref{lemma_degfi} as
\begin{equation}
\label{eq_casesdegrk}
d_i\geq 
\begin{cases}
\delta_i(r_i-1)\geq3(r_i-1) & i\in I_4;\\
2(2r_i-2) & i\in I_3;\\
2(r_i+g_i-1) & i\in I_2;\\
2(r_i-1) & i\in I_1.
\end{cases}
\end{equation}
where for $i\in I_4$ we are using the weaker inequality $d_i\geq \delta_i(r_i-1)$, for $i\in I_3\cup I_2$ we are using the inequalities which come from the definition of $I_3$ and $I_2$ and for $i\in I_1$ we are using the stronger inequality $d_i\geq 2r_i-2$ which holds because for these indexes $\phi_i$ is birational.
Observe that this inequality is still valid when $r_1=1$ and hence $d_1=0$.

Thanks to Equation \ref{eq_tensorstrong} and the fact that $\mu_{min}(E)$ is characterized as the minial slope of a subbundle of $E$, we know that 
\begin{equation}
\label{eq_degf2}
\mu_{min}({\F_i})\geq\mu_{min}(E_i\otimes E_i)= 2\mu_i.
\end{equation}

Now, thanks to Corollary \ref{coro_belloneq}, where, in the notation of that Corollary, $\widetilde{\mu}_i=2\mu_i$ and $\widetilde{r}_i$ as defined in Equation \ref{eq_ritilde},  we get
\begin{equation}
\begin{split}
\deg(\F)\geq \sum_{i\in I_4}(4r_i-2)(\mu_i-\mu_{i+1})+\sum_{i\in I_3}(6r_i-6)(\mu_i-\mu_{i+1})+\\
+\sum_{i\in I_2}(4r_i+2g_i-2)(\mu_i-\mu_{i+1})+\sum_{i\in I_1}(6r_i-6)(\mu_i-\mu_{i+1}).
\end{split}
\label{eq_nuovanew10}
\end{equation}

\begin{theorem}[cf. Theorem 1.2 \cite{zuo} for 1. over the complex numbers and Theorem 3.1 \cite{gusunzhou} for 2. with $0\leq c\leq\frac{1}{3}$]
\label{teo_slopegusunzhou}
Let $f\colon X\to C$ be a non-hyperelliptic  surface fibration such that $K_{X/C}$ is nef and all the quotients appearing in the Harder-Narasimhan filtration of $f_*\omega_{X/C}$ are strongly semi-stable.
\begin{enumerate}
	\item Assume that there exists an integer $\delta>1$ such that $\delta_i=1$ or $\delta_i>\delta$ for all $i$: then
	\begin{equation}
	K_{X/C}^2\geq \Bigl(5-\frac{1}{\delta}\Bigr)\frac{g-1}{g+2}\deg(f_*\omega_{X/C}).
	\label{eq_gusunzhou1}
	\end{equation}
	\item Assume that there exists a constant $0\leq c\leq\frac{1}{2}$ such that $g_i\geq cg$ whenever $\delta_i=2$: then
	\begin{equation}
	K_{X/C}^2\geq (4+c)\frac{g-1}{g+2}\deg(f_*\omega_{X/C}).
	\label{eq_gusunzhou2}
	\end{equation}
\end{enumerate}
\end{theorem}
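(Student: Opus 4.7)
The plan is to combine the Xiao-type lower bound from Proposition \ref{propo_xiaorelcan} with the bound from Lemma \ref{lemma_degs2f} (applied via Equation \ref{eq_nuovanew10}) to obtain a single estimate for $K_{X/C}^2$ that can then be compared directly with $\deg(f_*\omega_{X/C}) = \sum_{i=1}^n r_i(\mu_i - \mu_{i+1})$. This follows the strategy of \cite{gusunzhou} Theorem 3.1 (and of \cite{zuo} for Part 1 in the complex case), with the improvement over the previously known range $c\leq 1/3$ coming from systematically exploiting the refined rank bound $\widetilde{r}_i \geq 3r_i - 3$ on the indexes where $\phi_i$ is birational.

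Concretely, I would first combine Lemma \ref{lemma_degs2f} with Equation \ref{eq_nuovanew10} to get
\begin{equation*}
K_{X/C}^2 \;\geq\; \sum_{i=1}^{n}\bigl(2\widetilde{r}_i - r_i\bigr)(\mu_i - \mu_{i+1}),
\end{equation*}
with the $\widetilde{r}_i$ given by \ref{eq_ritilde}. Applying Proposition \ref{propo_xiaorelcan} with the full index set $\{1,\dots,n\}$ and inserting the lower bounds on $d_i$ from \ref{eq_casesdegrk} gives in parallel
\begin{equation*}
K_{X/C}^2 \;\geq\; \sum_{i=1}^{n}(d_i + d_{i+1})(\mu_i - \mu_{i+1}).
\end{equation*}
I would then form the convex combination $\lambda\cdot(\text{first})+(1-\lambda)\cdot(\text{second})$ with a parameter $\lambda\in[0,1]$, rearrange so that the coefficient of each $(\mu_i - \mu_{i+1})$ can be estimated from below by $\alpha\, r_i$, with $\alpha$ the target slope, and finally optimize in $\lambda$ case by case.

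For Part 1 the hypothesis forces every index into $I_1 \cup I_4$. On $I_1$ one uses $d_i \geq 2(r_i - 1)$ and $\widetilde{r}_i \geq 3r_i - 3$; on $I_4$ one uses $d_i \geq \delta_i(r_i - 1) \geq (\delta+1)(r_i - 1)$ (since $\delta_i$ is an integer strictly greater than $\delta$) together with $\widetilde{r}_i \geq 2r_i - 1$. Optimization of $\lambda$ then produces $\alpha = (5 - 1/\delta)(g-1)/(g+2)$: the factor $(g-1)/(g+2)$ appears because the extremal term is the top index $i=n$, where $r_n = g$, $d_{n+1} = 2g - 2$ and $\widetilde{r}_n = 3g - 3$. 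For Part 2 indexes in $I_2$ also contribute, and the hypothesis $g_i \geq cg$ upgrades the bounds $d_i \geq 2(r_i + g_i - 1)$ and $\widetilde{r}_i \geq 2r_i + g_i - 1$; re-optimizing $\lambda$ yields the constant $4 + c$. The restriction $c \leq 1/2$ is needed so that the $I_1$ contribution remains asymptotically dominant and the optimal $\lambda$ lies in $[0,1]$.

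The main obstacle is the coupling $d_i + d_{i+1}$ inside Xiao's bound, which forbids a literal term-by-term comparison with $\alpha r_i(\mu_i-\mu_{i+1})$. This is resolved by the standard rearrangement
\begin{equation*}
\sum_{i=1}^n(d_i + d_{i+1})(\mu_i - \mu_{i+1}) \;=\; \sum_{i=1}^{n} d_i(\mu_{i-1} - \mu_{i+1}) + (2g-2)\mu_n
\end{equation*}
(with the conventions $\mu_0 = \mu_1$, $\mu_{n+1} = 0$), combined with a careful bookkeeping over the four index classes $I_1,\dots,I_4$ and the strict monotonicity $r_{i+1}\geq r_i+1$. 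Extracting the precise constants $5 - 1/\delta$ and $4 + c$ then reduces to a finite linear-programming exercise in $\lambda$ and in the parameters $r_i$, $g_i$, $\delta_i$, which is the most delicate part of the proof.
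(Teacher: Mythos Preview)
Your overall strategy is the same as the paper's: combine the bound from Lemma \ref{lemma_degs2f} and Equation \ref{eq_nuovanew10} (this gives Equation \ref{eq_slopezhou1}) with Xiao's inequality on the full index set (this gives Equation \ref{eq_slopezhou2}) via a convex combination with parameter $\lambda$, which the paper simply takes equal to $c$. The paper also handles the coupling $d_i+d_{i+1}$ directly, by bounding $d_{i+1}\ge 2r_{i+1}-2\ge 2r_i$, rather than through your summation-by-parts rearrangement; both work.

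There is, however, one genuine gap in your sketch. After forming the convex combination you will not obtain $K_{X/C}^2\ge \alpha\sum_i r_i(\mu_i-\mu_{i+1})$ on the nose: an error term of the form $-\,\mathrm{const}\cdot\mu_1$ survives (in the paper this is $-(4+2c)\mu_1$, coming from the telescoping sum $\sum_i(\mu_i-\mu_{i+1})=\mu_1$). This term cannot be absorbed by looking at the top index $i=n$; your claim that the factor $(g-1)/(g+2)$ arises from $r_n=g$, $d_{n+1}=2g-2$, $\widetilde r_n=3g-3$ is not correct. What is needed is a \emph{third} inequality, namely Proposition \ref{propo_xiaorelcan} applied to the single index set $\{1\}$, which gives $K_{X/C}^2\ge (2g-2)\mu_1$. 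Combining this with the previous estimate eliminates $\mu_1$ and produces
\[
K_{X/C}^2\ \ge\ (4+c)\,\frac{g-1}{g+1+c}\,\deg(f_*\omega_{X/C})\ \ge\ (4+c)\,\frac{g-1}{g+2}\,\deg(f_*\omega_{X/C}).
\]
So the factor $(g-1)/(g+2)$ comes from this elimination step, not from the index $n$. Once you add this third inequality your plan goes through exactly as in the paper.
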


\begin{proof}
First of all we notice that if $\deg(f_*\omega_{X/C})$ is non-positive, the thesis is trivial in both cases.
Hence we may assume that $\deg(f_*\omega_{X/C})>0$ from which, essentially by definition, we derive that $\mu_1>0$.

By Lemma \ref{lemma_degs2f} and Equation \ref{lemma_deghn} we have
\begin{equation}
K_{X/C}^2\geq \deg(\F)-\deg(f_*\omega_{X/C})=\deg(\F)-\sum_{i=1}^nr_i(\mu_i-\mu_{i+1}).
\label{eq_nuovanew1}
\end{equation}

Combining Equations \ref{eq_nuovanew1} and \ref{eq_nuovanew10}, we get
\begin{equation}
\begin{split}
K_{X/C}^2\geq \sum_{i\in I_4}(3r_i-2)(\mu_i-\mu_{i+1})+\sum_{i\in I_3}(5r_i-6)(\mu_i-\mu_{i+1})+\\
+\sum_{i\in I_2}(3r_i+2g_i-2)(\mu_i-\mu_{i+1})+\sum_{i\in I_1}(5r_i-6)(\mu_i-\mu_{i+1}).
\end{split}
\label{eq_slopezhou1}
\end{equation}

Case 1 can be proved as over the complex numbers (cf. Theorem 1.2 \cite{zuo}) or with a similar argument as for case 2 (cf. Theorem 4.2.2(2) \cite{contitesi}, notice that in this case $I_2$ and $I_3$ are empty).

Now consider the second part of the Theorem.
By  Equation \ref{eq_casesdegrk} we get
\begin{equation}
\label{eq_boh1}
d_i+d_{i+1}\geq 2d_i\geq 6(r_i-1)
\end{equation}
for $i\in I_4$ (this also works for $r_1=1$).
Similarly we obtain 
\begin{equation}
d_i+d_{i+1}\geq 2(2r_i-2)+2(r_{i+1}-1)\geq6r_i-4\geq 4r_i-4
\end{equation}
for $i\in I_3$,
\begin{equation}
d_i+d_{i+1}\geq 2(r_i+g_i-1)+2(r_{i+1}-1)\geq 4r_i+2g_i-2
\end{equation}
for $i\in I_2$ and
\begin{equation}
d_i+d_{i+1}\geq 2(r_i-1)+2(r_{i+1}-1)\geq 4r_i-2
\end{equation}
for $i\in I_1\setminus\{n\}$.
Moreover $d_n+d_{n+1}=4g-4=4r_n-4$.
Notice that for $d_{i+1}$ we are always using the weaker inequality $d_{i+1}\geq 2r_{i+1}-2\geq 2r_i$ of \ref{eq_casesdegrk} which holds for every index $i$ except for $n$ which we are excluding from $I_1$ because it is the unique index $i$ for which $r_{i+1}>r_i$ does not hold.

Combining these equations with Proposition \ref{propo_xiaorelcan} with set of indexes $1,\ldots,n$, we get
\begin{equation}
\begin{split}
K_{X/C}^2\geq \sum_{i\in I_4}(6r_i-6)(\mu_i-\mu_{i+1})+\sum_{i\in I_3\cup\{n\}}(4r_i-4)(\mu_i-\mu_{i+1})\\
+\sum_{i\in I_2}(4r_i+2g_i-2)(\mu_i-\mu_{i+1})+\sum_{i\in I_1\setminus\{n\}}(4r_i-2)(\mu_i-\mu_{i+1}).
\end{split}
\label{eq_slopezhou2}
\end{equation} 

Calculating $c(\ref{eq_slopezhou1})+(1-c)(\ref{eq_slopezhou2})$ we have (recall that $g=r_n\geq r_i$ and $g_i\geq cg$)
\begin{equation}
\label{eq_slopezhou4}
\begin{split}
K_{X/C}^2\geq (4+c)\sum_{i=1}^{n}r_i(\mu_i-\mu_{i+1})+(2-4c)\sum_{i\in I_4}r_i(\mu_i-\mu_{i+1})\\
-(6-4c)\sum_{i\in I_4}(\mu_i-\mu_{i+1})-(4+2c)\sum_{i\in I_3}(\mu_i-\mu_{i+1})\\
-2\sum_{i\in I_2}(\mu_i-\mu_{i+1})-(2+4c)\sum_{i\in I_1\setminus\{n\}}(\mu_i-\mu_{i+1})-(4+2c)\mu_n.
\end{split}
\end{equation}

We notice that, for $1/3\leq c\leq 1/2$, we have $4+2c\geq 6-4c$, $4+2c\geq 2$, $4+2c\geq 2+4c$ and $2-4c\geq 0$: in particular
\begin{equation}
\begin{split}
K_{X/C}^2\geq (4+c)\deg(f_*\omega_{X/C})&-(4+2c)(\mu_1-\mu_n)-(4+2c)\mu_n\geq\\
\geq(4+c)\deg(f_*&\omega_{X/C})-(4+2c)\mu_1
\end{split}
\end{equation}
(we are using that $\mu_i-\mu_{i+1}\geq 0$ for $i\neq n$).

Combining this equation with $K_{X/C}^2\geq(2g-2)\mu_1$ (here we are using Proposition \ref{propo_xiaorelcan} with set of indexes $\{1\}$) we obtain
\begin{equation}
K_{X/C}^2\geq(4+c)\frac{g-1}{g+1+c}\deg(f_*\omega_{X/C})\geq(4+c)\frac{g-1}{g+2}\deg(f_*\omega_{X/C})
\end{equation}
where we are using the assumption $\deg(f_*\omega_{X/C})>0$ made at the beginning of this proof.
Hence we have proven 2 for a constant $1/3\leq c\leq 1/2$.

One can substitute Equation \ref{eq_boh1} by
\begin{equation}
\label{eq__boh1}
d_i+d_{i+1}\geq 5r_i-4
\end{equation}
for $i\in I_4$: indeed, for $i\in I_4$, if $r_i>1$ we clearly have $6r_i-6\geq 5r_i-4$, while if $r_1=1$ we have that $d_1+d_2=d_2\geq 1=5r_1-4$.

Arguing as above just substituting inequality \ref{eq_boh1} with inequality \ref{eq__boh1}, we obtain the thesis of the Theorem for a constant $0\leq c\leq\frac{1}{3}$ (cf. also Theorem 3.1 \cite{gusunzhou}).
%
\end{proof}

\begin{corollary}[cf. \cite{lu} Theorem 2.1 for the same result over the complex numbers]
Let $f\colon X\to C$ a surface fibration  such that $K_{X/C}$ is nef and all the quotients appearing in the Harder-Narasimhan filtration of $f_*\omega_{X/C}$ are strongly semi-stable and suppose that $K_{X/C}^2<\frac{9(g-1)}{2(g+2)}\deg(f_*\omega_{X/C})$. 
Then $f$ factors through a morphism $\pi\colon X\to Y$ of degree $2$, i.e. the following diagram commutes
\begin{equation}
\begin{tikzcd}
X\arrow{rr}{\pi}\arrow{dr}{f} & & Y\arrow{ld}{}\\
& C. & 
\end{tikzcd}
\end{equation}
\label{coro_slopegusunzhou2}
\end{corollary}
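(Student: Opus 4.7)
The plan is to argue by contradiction using part 1 of Theorem \ref{teo_slopegusunzhou}. Recall that for each index $i$ of the Harder-Narasimhan filtration of $f_*\omega_{X/C}$ we have associated the rational map $\phi_i\colon X\dashrightarrow X_i\subseteq \P(E_i)$ over $C$ and its degree $\delta_i$. The only quantity the hypothesis constrains is $K_{X/C}^2/\deg(f_*\omega_{X/C})$, so one first wants to isolate the geometric feature that makes this ratio drop below $\tfrac{9(g-1)}{2(g+2)}$.

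Apply Theorem \ref{teo_slopegusunzhou}(1) with $\delta=2$: the hypothesis there is that $\delta_i=1$ or $\delta_i>2$ for every $i$, and its conclusion is precisely
\begin{equation*}
K_{X/C}^2\geq \Bigl(5-\tfrac{1}{2}\Bigr)\frac{g-1}{g+2}\deg(f_*\omega_{X/C})=\frac{9(g-1)}{2(g+2)}\deg(f_*\omega_{X/C}).
\end{equation*}
Since the assumption of the corollary rules this conclusion out, the hypothesis of part 1 must fail: there exists some index $i^*$ with $\delta_{i^*}=2$. (In particular $r_{i^*}\geq 2$, since by convention $\delta_1=\infty$ when $r_1=1$.)

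For this index the saturated image of $\Phi_{i^*}\colon f^*E_{i^*}\to \omega_{X/C}$ determines a rational map $\phi_{i^*}\colon X\dashrightarrow X_{i^*}$ whose indeterminacy locus is contained in the zero-scheme of the cokernel and is therefore of codimension two in $X$. By the very definition of $\delta_{i^*}=2$, the induced map from the generic fibre of $f$ to the generic fibre of $X_{i^*}\to C$ is a degree-two cover. Taking a resolution of indeterminacies $\mu\colon\widetilde X\to X$ of $\phi_{i^*}$ and the Stein factorization $\widetilde X\xrightarrow{\widetilde\pi}Y\to X_{i^*}$ of the resulting morphism, we obtain a degree-two morphism $\widetilde\pi$ onto a normal surface $Y$ which fibres over $C$. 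Because $\mu$ only modifies a finite set of points, the composition factors through $X$, producing the desired morphism $\pi\colon X\to Y$ of degree two such that the composition $Y\to C$ equals $f\circ\mu^{-1}$ on the open locus where $\mu$ is an isomorphism, hence everywhere by properness.

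The only delicate step is the last one: turning the rational map $\phi_{i^*}$ into an honest morphism from $X$. The key point here is that the $\mu$-exceptional curves lie over the finitely many indeterminacy points of $\phi_{i^*}$, and these collapse under the Stein factorization $\widetilde\pi$ to finitely many points of $Y$, so $\pi$ descends to $X$. All other verifications (that $\pi$ is over $C$, of degree two, and factors $f$) are immediate from the construction of $\phi_{i^*}$ over $C$ via the projection $\P(E_{i^*})\to C$.
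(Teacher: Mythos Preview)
Your core argument---applying Theorem \ref{teo_slopegusunzhou}(1) with $\delta=2$ and reading off, by contradiction, that some $\delta_{i^*}=2$---is exactly the paper's proof, which consists of that single sentence. So the essential content is correct and matches.

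Where you go beyond the paper is in trying to upgrade the degree-two \emph{rational} map $\phi_{i^*}\colon X\dashrightarrow X_{i^*}$ to an honest degree-two morphism $\pi\colon X\to Y$, and this part has a gap. In the Stein factorization $\widetilde{X}\to Y\to X_{i^*}$, the first map has connected fibres and is therefore \emph{birational} (since the composition is generically finite), while it is the finite part $Y\to X_{i^*}$ that carries the degree two; so your $\widetilde\pi$ is not a degree-two morphism as written. More seriously, the claim that the $\mu$-exceptional curves collapse under $\widetilde\pi$ is not justified: an exceptional divisor over an indeterminacy point of $\phi_{i^*}$ can perfectly well map onto a curve in $X_{i^*}$ (hence in $Y$), in which case the birational map $\widetilde X\to Y$ does \emph{not} descend to $X$. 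Nothing in the construction of $\phi_{i^*}$ from $f^*E_{i^*}\to\omega_{X/C}$ forces those exceptional curves to contract.

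It is worth noting that the paper's own one-line proof does not address this point either, and in the subsequent application (Theorem \ref{teo_lualb2}) only a rational map $X_d\dashrightarrow Y_d$ is actually claimed and used. So the word ``morphism'' in the corollary should probably be read as ``rational map of degree two over $C$'', which is exactly what the existence of an index with $\delta_{i^*}=2$ provides. Your attempt is aimed at a genuine subtlety in the statement, but the argument as written does not close it.
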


\begin{proof}
Suppose by contradiction that there is not any morphism $f\colon X\to Y$ of this type: then we can apply part 1 of Theorem \ref{teo_slopegusunzhou} with $\delta=2$ and we get a contradiction.\qedhere
\end{proof}

\section{Factors of the Albanese morphism and refined Severi inequalities}
\label{sec_albanese}

In this Section we are going to prove Theorems \ref{teo_mio1} and \ref{teo_mio2}.
In particular the latter is the starting point (as done by Lu and Zuo in \cite{zuo} for the same result over the complex numbers) for the characterization of surfaces close to the Severi lines we will give in the next Sections.
We recall the results obtained in \cite{gusunzhou} following their proofs step by step and the main results of these section are refinement of results there obtained.
More in detail, Theorem \ref{teo_mio1} is a generalization of Theorem 5.5 loc. cit. (there, the presence of an involution which is a factor of the Albanese morphism is proved, without emphatizing it, only under the condition that certain finite morphisms of degree two are separable which is clearly always satisfied if the characteristic of the ground field is different from $2$) and \ref{teo_mio2} is a generalization of Theorem 5.14 loc. cit. (where the generalization is given by the fact that the constant  $c$ is contained in $[0,\frac{1}{2}]$ instead of $[0,\frac{1}{3}]$).

First we recall the construction of the surface fibrations over the projective line associated with $X$ presented by Pardini in \cite{par_sev} where they are used to prove Severi inequality.

Let $X$ be a minimal surface of general type with maximal Albanese dimension, let $\alb_X\colon X\to\Alb(X)$ be its Albanese morphism, denote by $q=\dim(\Alb(X))$ and let $L$ be a very ample line bundle on $\Alb(X)$ with $h^0(\Alb_X,L)\gg 0$.
For any integer $d$ coprime with the characteristic, consider the multiplication by $d$ morphism $\mu_d\colon \Alb(X)\to\Alb(X)$: it is known that (\cite{mumford} Proposition at the end of Section 6 and Corollary 1 Section 7)  it is an \'etale morphism of degree $d^{2q}$ and that (ibidem Section 8.(iv))
\begin{equation}
\mu_d^*(L)\sim_{num}d^2L.
\label{eq_'etalb}
\end{equation}

Now consider the following Cartesian diagram:
\begin{equation}
\label{eq_cartalb}
\begin{tikzcd}
X_d\arrow{r}{\nu_d} \arrow[dr, phantom, "\square"] \arrow{d}{a_d} & X\arrow{d}{\alb_X}\\
\Alb(X)\arrow{r}{\mu_d} & \Alb(X).
\end{tikzcd}
\end{equation}
It is clear that $\nu_d$ is \'etale and that $X_d$ is a connected minimal surface of general type of maximal Albanese dimension with invariants:
\begin{equation}
K_{X_d}^2=d^{2q}K_X^2,\qquad \chi(\O_{X_d})=d^{2q}\chi(\O_X).
\label{eq_invcartalb}
\end{equation}

Denote by $L_X=\alb_X^*L$ and $L_{X_d}=a_d^*L$: we have $L_{X_d}\sim_{num}d^{-2}\nu_d^*L_X$, in particular
\begin{equation}
L_{X_d}^2=d^{2q-4}L_X^2, \qquad K_{X_d}.L_{X_d}=d^{2q-2}K_X.L_X.
\label{eq_multcartalb}
\end{equation}

It is not difficult to prove that for a general element $(D_1,D_2)\in a_d^*|L|\times a_d^*|L|$ we have that $D_1$ and $D_2$ are integral, non-hyperelliptic and the intersection points of $D_1$ with $D_2$ lie in their smooth locus (cf. \cite{gusunzhou} Lemmas 4.1 and 5.3 and Corollary 4.2).
Consider the rational map $\phi'_d\colon X_d\dashrightarrow \P^1$ induced by the linear system generated by $D_1$ and $D_2$ and let $\phi_d\colon \widetilde{X}_d\to \P^1$ be its resolution of indeterminacy  obtained by successive blow-ups of the intersection points of the strict transforms of $D_1$ and $D_2$ and denote by $\pi_d\colon \widetilde{X}_d\to X_d$ the natural morphism.

\begin{lemma}[\cite{gusunzhou} Proposition 4.4]
\label{lem_surfib}
In the situation above, denote by $p_1,\ldots,p_r$ the intersection points of $D_1$ with $D_2$. 
Then we have
\begin{enumerate}
	\item $K_{\widetilde{X}_d}=\pi_d^*K_{X_d}+\sum_{i=1}^{r}(E_{i_1}+2E_{i_2}+\ldots+m_iE_{i_{m_i}})$ where $E_{i_j}$ denotes the strict transform of the exceptional divisor of the $j$-th blow-up over $p_i$;
	\item $E_{i_j}$ are $(-2)$-curves for $j<m_i$ and $E_{i_{m_i}}$ is a $(-1)$-curve;
	\item the strict transforms $\widetilde{D}_1$ and $\widetilde{D}_2$ of $D_1$ and $D_2$ are fibres of $\phi_d$, in particular the fibration has connected fibres;
	\item $E_{i_{m_i}}$ is a section of $\phi_d$ and $E_{i_j}$ is vertical for every $j<m_i$, in particular the fibration has no multiple fibres;
	\item the relative canonical bundle $K_{\widetilde{X}_d}-\phi_d^*K_{\P^1}$ is nef.
\end{enumerate}
\end{lemma}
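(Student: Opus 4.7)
The plan is to analyse the resolution locally at each base point $p_i$ of the pencil and then piece the local descriptions together. By hypothesis both $D_1$ and $D_2$ are smooth at $p_i$, so their local intersection multiplicity is some integer $m_i\ge 1$. The local core of the argument is the following standard observation: two smooth curves meeting with multiplicity $k\ge 2$ are necessarily tangent, so their strict transforms after a single blow-up of the meeting point remain smooth and meet at exactly one infinitely near point on the new exceptional, with multiplicity $k-1$; for $k=1$ they become disjoint. Iterating, exactly $m_i$ blow-ups are needed to separate $D_1$ and $D_2$, the centres of the successive blow-ups form a chain of infinitely near points of $p_i$, and the exceptional divisors $E_{i_1},\ldots,E_{i_{m_i}}$ form a linear chain inside $\widetilde{X}_d$.

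Given this local picture, parts (1) and (2) are obtained by iterating the blow-up formula. At the $j$-th step the centre lies on $F_{j-1}$, so $\sigma_j^*F_{j-1}=F_{j-1}'+F_j$; carrying this through all $m_i$ steps gives $K_{\widetilde{X}_d}-\pi_d^*K_{X_d}=\sum_k kE_{i_k}$ near $p_i$, which proves (1). By the same bookkeeping each $E_{i_j}$ with $j<m_i$ is the strict transform of a $(-1)$-curve after exactly one further blow-up centred on it, hence a $(-2)$-curve, while $E_{i_{m_i}}$ is the last exceptional introduced and is therefore a $(-1)$-curve, which proves (2). The same pullback calculation also yields $\pi_d^*D_j=\widetilde{D}_j+\sum_i\sum_k kE_{i_k}$, so the fixed part of the pullback pencil is exactly the union of the exceptional chains and the moving part is base-point-free; it defines $\phi_d\colon\widetilde{X}_d\to\P^1$ with $\widetilde{D}_1,\widetilde{D}_2$ as two of its members, hence fibres. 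Connectedness of a general fibre then follows via Stein factorisation from the fact that $\widetilde{D}_1$ is an integral fibre (being the strict transform of the integral curve $D_1$), giving (3). As for (4), $E_{i_{m_i}}$ meets each $\widetilde{D}_j$ transversely in one point, so $E_{i_{m_i}}\cdot F=1$ and it is a section; the earlier $E_{i_j}$ are disjoint from $\widetilde{D}_1\cup\widetilde{D}_2$, hence vertical; the presence of a section rules out multiple fibres.

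Part (5) is then a direct intersection calculation, curve by curve, on $K_{\widetilde{X}_d/\P^1}=K_{\widetilde{X}_d}+2F$. On a vertical $(-2)$-curve $E_{i_j}$ with $j<m_i$ both summands vanish; on the section $E_{i_{m_i}}$ one computes $-1+2=1$. For an irreducible $C$ not contracted by $\pi_d$, the projection formula gives $\pi_d^*K_{X_d}\cdot C=K_{X_d}\cdot \pi_{d*}C\ge 0$, since $K_{X_d}=\nu_d^*K_X$ is nef from the minimality and general-type assumption on $X$; the exceptional contribution $\sum_{i,k} kE_{i_k}\cdot C$ is nonnegative because these are distinct irreducible curves, and $2F\cdot C\ge 0$ is automatic. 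The main technical obstacle is really the careful local analysis at a tangent base point with $m_i>1$: one must check rigorously that the successive strict transforms remain smooth, that each new centre is forced to lie on the previous exceptional, and that the resulting configuration is a linear chain with the announced self-intersections. Once this local picture is firmly in place, the rest of the lemma reduces to straightforward bookkeeping with the canonical bundle formula and intersection numbers.
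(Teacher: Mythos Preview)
Your argument is correct and is exactly the standard resolution-of-pencils computation; the paper itself does not give a proof but simply cites \cite{gusunzhou} Proposition~4.4, and your write-up is essentially what that reference does. The only point worth tightening is the justification that the chain of infinitely near points over $p_i$ is genuinely \emph{linear}: you should say explicitly that after the $j$-th blow-up the strict transforms of $D_1,D_2$ are transverse to $E_{i_j}$ (since they were smooth at the centre), so their common intersection point on $E_{i_j}$ is distinct from $E_{i_j}\cap E_{i_{j-1}}'$, which is what makes the coefficients in $K_{\widetilde X_d}-\pi_d^*K_{X_d}$ come out as $1,2,\ldots,m_i$ rather than something larger.
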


\begin{remark}
\label{rem_notorsionhigher}
Let $F$ be  a fibre of $\phi_d$.
For every decomposition $F=A+B$ the intersection $A.B$ is greater than $0$: indeed $0=F^2=A^2+2A.B+B^2<2A.B$ where the last inequality follows by the Zariski's Lemma for fibrations (\cite{badescu} Corollary 2.6).
We have thus shown that $F$ is $1$-connected (cf. \cite{barth} Section II.12), in particular $h^0(F,\O_F)=1$ for every fibre $F$ of $\phi_d$ (cf. ibidem Lemma II.12.2).
Combining this with the base change Theorem (\cite{hart} Corollary III.12.9) we can derive that $R^1f_{d*}\O_{\widetilde{X}_d}$ is locally free: This is always true for a surface fibration over the complex numbers (cf. \cite{barth} Corollary III.11.2), but it is not always the case for a surface fibration in positive characteristic (cf. \cite{badescu} Chapter 7). 
\end{remark}

Because the blown-up points on $X_d$ lie above the smooth locus of $D_1$, its arithmetic genus does not vary: in particular the arithmetic genus of a fibre of $\phi_d$ is
\begin{equation}
g_d=1+\frac{K_{X_d}.L_{X_d}+L_{X_d}^2}{2}=1+\frac{d^{2q-2}K_X.L_X+d^{2q-4}L_X^2}{2}.
\label{eq_genfibr}
\end{equation}
Notice that, clearly $K_X.L_X>0$ because $X$ is minimal and $L_X$ is the pull-back of an ample divisor via the Albanese morphism, hence it can not be a union of $-2$-curves: for $d\to\infty$ we have $g_d\to\infty$.

Moreover
\begin{equation}
\begin{split}
K_{\widetilde{X}_d/\P^1}^2=K_{\widetilde{X}_d}^2+4F.K_{\widetilde{X}_d}&=K_{X_d}^2+E^2+8(g_d-1)=\\
=K_{X_d}^2-L_{X_d}^2+4K&_{X_d}.L_{X_d}+4L_{X_d}^2=\\
=d^{2q}K_X^2+4d^{2q-2}K_X.&L_X+3d^{2q-4}L_X^2
\end{split}
\label{eq_cansqfib}
\end{equation}
where $E=\sum_{i=1}^r(E_{i_1}+2E_{i_2}+\ldots+E_{i_{m_i}})$.

Combining Equation \ref{eq_pushforstructure} with Remark \ref{rem_notorsionhigher} we get
\begin{equation}
\begin{split}
\deg((\phi_d)_*\omega_{\widetilde{X}_d/\P^1})=\chi(\O_{\widetilde{X}_d})+g_d-1=\\
=d^{2q}\chi(\O_X)+\frac{d^{2q-2}K_X.L_X+d^{2q-4}L_X^2}{2}.
\end{split}
\label{eq_deghigherpushf}
\end{equation}

Notice that, thanks to \cite{gu2019slope} Corollary 3.4, we have that $\chi(\O_X)>0$ which implies that $\deg((\phi_d)_*\omega_{\widetilde{X}_d/\P^1})>0$ for $d\gg 0$.
In particular we can define the slope of $\phi_d$ and we have that
\begin{equation}
\lambda(\phi_d)=\frac{2d^{2q}K_X^2+8d^{2q-2}K_X.L_X+6d^{2q-4}L_X^2}{2d^{2q}\chi(\O_X)+d^{2q-2}K_X.L_X+d^{2q-4}L_X^2}\xrightarrow{d\to\infty}\frac{K_X^2}{\chi(\O_X)}.
\label{eq_limitslope}
\end{equation}

In particular we have constructed a sequence of surface fibrations $\phi_d\colon\widetilde{X}_d\to\P^1$ which satisfies:
\begin{equation}
\label{eq_maverso}
\begin{split}
	\lim_{d\to\infty}g_d&=\infty;\\
	\lim_{d\to\infty}\lambda(\phi_d)&=\frac{K_X^2}{\chi(\O_X)}.
	\end{split}
\end{equation}

\begin{remark}
It is not difficult to use these surface fibrations and the Slope inequality (cf. \ref{rem_slopexi} and recall that by Lemma \ref{lem_surfib}(5) $\phi_d\colon\widetilde{X}_d\to\P^1$ have nef relative canonical bundle) to prove Severi inequality for a minimal surface of general type (i.e. $K_X^2\geq 4\chi(\O_X)$) using the argument of Pardini in \cite{par_sev}.
\end{remark}

\begin{theorem}
Let $X$ be a surface of general type and let $\phi'_d\colon X_d\dashrightarrow\P^1$ be the surface fibration constructed above for $d\gg 0$ and coprime with the characteristic of the ground field.
If $K_X^2<\frac{9}{2}\chi(\O_X)$ then there exists a rational map $f_d\colon X_d\dashrightarrow Y_d$ to a surface $Y_d$ of degree $2$ such that
\begin{equation}
\begin{tikzcd}
X_d\arrow[dashed]{dr}{\phi_d}\arrow[dashed]{rr}{f_d} & & Y_d\arrow{dl}{\psi_d}\\
& \P^1 &
\end{tikzcd}
\end{equation}
commutes.
Moreover, if $f_d$ is separable, it is then induces by an involution $\sigma_d$ of $X_d$.
\label{teo_lualb2}
\end{theorem}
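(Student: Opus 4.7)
The plan is to apply Corollary \ref{coro_slopegusunzhou2} to the fibration $\phi_d\colon \widetilde{X}_d\to\P^1$ constructed in this section, and then transfer the resulting degree-two morphism back to $X_d$ via the birational map $\pi_d\colon\widetilde{X}_d\to X_d$. Indeed, once we produce a degree-two morphism $\widetilde{f}_d\colon\widetilde{X}_d\to Y_d$ with $\psi_d\circ\widetilde{f}_d=\phi_d$, composing with $\pi_d^{-1}$ gives the desired rational map $f_d\colon X_d\dashrightarrow Y_d$, and the equality $\phi_d'\circ\pi_d=\phi_d$ combined with the commutativity on $\widetilde{X}_d$ forces the required factorization.

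To invoke the Corollary I need to verify its three hypotheses for $\phi_d$. First, nefness of $K_{\widetilde{X}_d/\P^1}$ is exactly Lemma \ref{lem_surfib}(5). Second, the Harder-Narasimhan filtration of $(\phi_d)_*\omega_{\widetilde{X}_d/\P^1}$ has strongly semi-stable quotients: every vector bundle on $\P^1$ splits as a direct sum of line bundles by Grothendieck's theorem, so the successive quotients of the Harder-Narasimhan filtration are direct sums of copies of a single line bundle, and such bundles remain isomorphic (up to twist of the exponent) after any Frobenius pull-back since $F_k^*\O_{\P^1}(n)=\O_{\P^1}(pn)$, hence they are strongly semi-stable. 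Third, the slope condition must be checked: combining Equations \ref{eq_cansqfib} and \ref{eq_deghigherpushf} with Equation \ref{eq_limitslope} yields
\begin{equation*}
\lim_{d\to\infty}\lambda(\phi_d)=\frac{K_X^2}{\chi(\O_X)}<\frac{9}{2}=\lim_{d\to\infty}\frac{9(g_d-1)}{2(g_d+2)},
\end{equation*}
where the strict inequality is precisely the hypothesis $K_X^2<\frac{9}{2}\chi(\O_X)$ and the limit on the right follows from $g_d\to\infty$ (Equation \ref{eq_genfibr}). Thus for all sufficiently large $d$ coprime with the characteristic one has $K_{\widetilde{X}_d/\P^1}^2<\frac{9(g_d-1)}{2(g_d+2)}\deg((\phi_d)_*\omega_{\widetilde{X}_d/\P^1})$, and the Corollary produces $\widetilde{f}_d\colon\widetilde{X}_d\to Y_d$ of degree two with $\phi_d=\psi_d\circ\widetilde{f}_d$.

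For the moreover part, suppose $f_d$ is separable. Then the induced extension of function fields $k(X_d)/k(Y_d)$ is a separable degree-two extension; being of degree two it is automatically Galois with Galois group $\mathbb{Z}/2\mathbb{Z}$, and the non-trivial element corresponds to a birational involution of $X_d$. Since $X_d$ is a smooth minimal surface of general type (being étale over $X$), every birational self-map of $X_d$ extends to a biregular automorphism, giving the involution $\sigma_d$ such that $X_d/\sigma_d$ is birational to $Y_d$.

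The main subtlety I anticipate is the verification of the strong semi-stability hypothesis: although the $\P^1$ base makes this essentially automatic once one invokes Grothendieck's splitting theorem and the explicit action of Frobenius on line bundles of $\P^1$, one has to be careful that $(\phi_d)_*\omega_{\widetilde{X}_d/\P^1}$ is genuinely locally free on $\P^1$, which relies on the $1$-connectedness of the fibres observed in Remark \ref{rem_notorsionhigher}. Everything else is a direct application of the slope-type inequality proved in the previous Section together with the numerical limits governing Pardini's construction.
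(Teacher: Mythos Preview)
Your proposal is correct and follows the same approach as the paper, which simply states that the result follows from Corollary \ref{coro_slopegusunzhou2} via the argument of \cite{zuo} Theorem 2.2. You have spelled out the details that the paper leaves implicit: the verification that $\phi_d$ satisfies the hypotheses of the Corollary (nefness from Lemma \ref{lem_surfib}(5), strong semi-stability because the base is $\P^1$, and the slope bound from the limit computations \ref{eq_limitslope} and \ref{eq_maverso}), and the extension of the birational involution to a biregular one using minimality of $X_d$.
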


\begin{proof}
This follows from Corollary \ref{coro_slopegusunzhou2} using the same argument as in \cite{zuo} Theorem 2.2 (there the same result is proved over the complex numbers).
\end{proof}

\begin{lemma}[cf. \cite{lu} Lemma 3.2]
Suppose that $f_d$ is separable for a general pencil in $a_d^*|L|$ and assume, as usual, that $d\gg 0$ and coprime with the characteristic of the ground field.
Then, for a general pencil in $a_d^*|L|$, the involution $\sigma_d$ of $X_d$ corresponding to $f_d$ satisfies $a_d\circ\sigma_d=a_d$.
\end{lemma}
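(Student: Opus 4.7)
The strategy combines a rigidity statement for the involutions $\sigma_d$ as the pencil varies with an application of the very ampleness of $L$.

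First I would observe that, since $X_d$ is a surface of general type, $\bir(X_d)=\aut(X_d)$ is finite. For each general pencil $W\subset H^0(\Alb(X),L)$ the separable degree-two quotient $f_d$ has a unique covering involution, which is \emph{a priori} birational but extends to a biregular self-map of $X_d$; call it $\sigma_W\in\aut(X_d)$. The assignment $W\mapsto\sigma_W$ is algebraic on the irreducible open subset $\Omega$ of the Grassmannian of $2$-planes where the construction of Theorem \ref{teo_lualb2} applies, so any morphism $\Omega\to\aut(X_d)$ into a finite group is constant. Hence $\sigma_W=\sigma$ is independent of $W\in\Omega$.

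Next, since $\phi'_d=\psi_d\circ f_d$, the involution $\sigma$ preserves every fibre of $\phi'_d$ setwise, and thus as a divisor. Every such fibre has the form $a_d^*\tilde D$ with $\tilde D$ running through the pencil $\P(W)\subset|L|$, so $\sigma^*(a_d^*\tilde D)=a_d^*\tilde D$ for every $\tilde D\in\P(W)$. Letting $W$ range over $\Omega$, the union $\bigcup_{W\in\Omega}\P(W)$ is Zariski-dense in $|L|$ and the relation $\sigma^*(a_d^*\tilde D)=a_d^*\tilde D$ is Zariski-closed in $\tilde D$, so it holds for every $\tilde D\in|L|$.

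Finally I would pick a general point $x\in X_d$ and set $p=a_d(x)$, $q=a_d(\sigma(x))$. For every $\tilde D\in|L|$ through $p$ one has $x\in a_d^*\tilde D$, hence $\sigma(x)\in a_d^*\tilde D$, i.e.\ $q\in\tilde D$. The very ampleness of $L$, with $h^0(L)$ taken arbitrarily large, guarantees that divisors in $|L|$ separate points of $\Alb(X)$, so the intersection of all $\tilde D\in|L|$ containing $p$ is $\{p\}$. This forces $q=p$, i.e.\ $a_d\circ\sigma=a_d$ at $x$, and the equality then extends to all of $X_d$ by continuity.

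The main obstacle I expect is the rigidity step: one has to verify carefully that $W\mapsto\sigma_W$ really is algebraic, which amounts to controlling the variation of the canonical resolution $\widetilde X_d\to X_d$ and of the rational map $f_d$ in families over $\Omega$, and then extending birational self-maps to biregular automorphisms of $X_d$ uniformly in $W$.
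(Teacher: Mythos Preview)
Your proposal is correct and is precisely the argument of \cite{lu} Lemma 3.2, which is exactly what the paper invokes: its entire proof reads ``The proof given for Lemma 3.2 \cite{lu} is characteristic-free if we assume that $f_d$ is separable.'' Your concern about the rigidity step is legitimate but standard: finiteness of $\aut(X_d)$ for $X_d$ minimal of general type holds in any characteristic (automorphisms act faithfully on $H^0(X_d,mK_{X_d})$ for $m\gg 0$ and preserve the image of the pluricanonical map), and the irreducibility of the parameter space $\Omega$ then forces constancy of $\sigma_W$.
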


\begin{proof}
The proof given for Lemma 3.2 \cite{lu} is characteristic-free if we assume that $f_d$ is separable.
\end{proof}

\begin{proposition}[\cite{gusunzhou} Proposition 5.10]
\label{propo_gusunzhoufactalb}
Suppose that for infinitely many prime numbers $l$ we have a suitable subpencil of $a_l^*|L|$ such that the corresponding degree-two morphism $f_l$ is separable and let $\sigma_l$ be its corresponding involution.
Then $f_l$ descends to a separable degree-two morphism $f\colon X\to Y$ induced by an involution $\sigma$ of $X$, such that $\alb_X\circ\sigma=\alb_X$.
\label{propo_infiniteseparable2}
\end{proposition}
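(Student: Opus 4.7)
The plan is to descend the infinitely many involutions $\sigma_l$ on the \'etale covers $\nu_l\colon X_l\to X$ to a single involution on $X$ itself, and then read off the desired degree-two morphism $f\colon X\to Y=X/\sigma$. Recall that $\nu_l$ is \'etale Galois with group $G_l=\Alb(X)[l]$ acting by translation on the $\Alb(X)$-factor of $X_l=X\times_{\Alb(X)}\Alb(X)$, and $a_l$ is $G_l$-equivariant for the translation action on the base. Combining the hypothesis $a_l\circ\sigma_l=a_l$ with the identity $\mu_l\circ a_l=\alb_X\circ\nu_l$ yields the key relation $\alb_X\circ(\nu_l\circ\sigma_l)=\alb_X\circ\nu_l$.

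The first step is to show that $\sigma_l$ normalizes $G_l$ inside $\aut(X_l)$, so that it descends to a birational involution $\overline{\sigma}_l$ of $X=X_l/G_l$. For every $g\in G_l$, the automorphism $\sigma_l g \sigma_l^{-1}$ satisfies $a_l\circ(\sigma_l g \sigma_l^{-1})=a_l+g$ (by the Albanese-preservation of $\sigma_l$ and the equivariance of $a_l$), exactly as does $g$ itself. Using that $X_l$ has maximal Albanese dimension together with the rigidity of the fibers of $a_l$ (an automorphism of $X_l$ inducing the trivial action on $\Alb(X_l)\to\Alb(X)$ and translating $a_l$ by the same amount as $g$ must coincide with $g$), I would conclude that $\sigma_l g \sigma_l^{-1}=g$; hence $\sigma_l$ normalizes (in fact centralizes) $G_l$ and descends to an involution $\overline{\sigma}_l$ of $X$.

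Next I would invoke the finiteness of $\bir(X)=\aut(X)$ for a minimal surface of general type: the descended involutions $\overline{\sigma}_l$ lie in a finite set, so after passing to an infinite subset of primes $l$, they all coincide with a single involution $\sigma$ of $X$. Pulling $\sigma$ back along $\nu_l$ recovers $\sigma_l$ (lifts to an \'etale Galois cover are unique up to the Galois group, and the choice is fixed by matching $\sigma_l$). Therefore each $f_l$ for these primes is the pull-back of the morphism $f\colon X\to Y:=X/\sigma$. Separability of $f$ follows because $\nu_l$ is \'etale and $f_l=f\circ\nu_l$ is separable by hypothesis, while $\alb_X\circ\sigma=\alb_X$ is obtained by descending $a_l\circ\sigma_l=a_l$ through the isogeny $\mu_l$.

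The hard part is the normalization step. The assertion $\sigma_l g\sigma_l^{-1}=g$ is not formal from $a_l\circ\sigma_l=a_l$: it requires that the discrepancy between these two automorphisms, which a priori is an automorphism of $X_l$ over $\Alb(X)$, be shown to be trivial, and this is where one must use maximal Albanese dimension (to rule out nontrivial automorphisms in the generic fiber of $a_l$) and the \'etale structure of $\nu_l$ (to pin down the action on torsion). Once this normalization is established, the remainder reduces to standard finiteness/pigeonhole and compatibility of Albanese morphisms.
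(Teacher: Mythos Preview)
The paper does not give its own proof of this proposition; it is simply cited from \cite{gusunzhou}, Proposition~5.10. So there is no paper-side argument to compare against, and I comment only on the soundness of your outline.

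Your overall architecture (normalize $G_l$, descend $\sigma_l$ to $X$, then pigeonhole inside the finite group $\aut(X)$) is correct, and the last two steps go through as you describe. The gap is exactly where you flag it: the normalization step. Your proposed justification, ``rule out nontrivial automorphisms in the generic fibre of $a_l$'', cannot work, because $\sigma_l$ is itself a nontrivial automorphism of $X_l$ over $a_l$. Concretely, your claim amounts to saying that any automorphism $\tau$ of $X_l$ with $a_l\circ\tau=t_g\circ a_l$ must equal $g$; taking $g=e$ would then force $\aut(X_l,a_l)=\{e\}$, contradicting $\sigma_l\in\aut(X_l,a_l)$.

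What actually closes the gap is a counting argument that exploits the hypothesis ``infinitely many \emph{primes}''. Since $a_l$ has degree $\deg(\alb_X)=:N$ independent of $l$, one has $|\aut(X_l,a_l)|\le N$. You have already shown $h_g:=g^{-1}\sigma_l g\sigma_l^{-1}\in\aut(X_l,a_l)$; moreover $h_g=h_{g'}$ iff $g'g^{-1}$ commutes with $\sigma_l$, so the image of $g\mapsto h_g$ has cardinality exactly $[G_l:C_{G_l}(\sigma_l)]\le N$. But $G_l\cong(\Z/l\Z)^{2q}$ is an $l$-group, so every subgroup has $l$-power index; for primes $l>N$ this forces $C_{G_l}(\sigma_l)=G_l$, i.e.\ $\sigma_l$ centralizes $G_l$ and therefore descends. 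This is the step where the ``prime'' hypothesis earns its keep, and once it is in place the remainder of your argument is correct.
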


This last Proposition is the main ingredient in order to prove Theorem \ref{teo_mio1}: what remains to prove is that, if all but finitely many $f_d$ are inseparable (in particular the characteristic of the ground field is two) and the Albanese morphism of $X$ is separable (if $\alb_X$ is inseparable the proof is almost immediate, as we will see), then $K_X^2\geq\frac{9}{2}\chi(\O_X)$ which will be proved using Theorem \ref{teo_slopegusunzhou}(2).

With the notation as in Theorem \ref{teo_lualb2}, let $\widetilde{Y}_d$ be the minimal model of the resolution of singularities of $Y_d$ and let $g_d'$ be the arithmetic genus of a general fibre of $\psi_d\colon \widetilde{Y}_d\to\P^1$.
We are going to give an estimate of $g_d'/g_d$ in order to apply  Theorem \ref{teo_slopegusunzhou}.

\begin{proposition}[cf. \cite{gusunzhou} Propositions 5.2 and 5.13] 
Suppose that $\alb_X\colon X\to\Alb(X)$ is a separable morphism, that the characteristic of the ground field is $2$ and that for a general choice of a pencil in $a_d^*|L|$ for all but finitely many $d$ coprime with $2$, we have that the corresponding rational map $f_d\colon X_d\dashrightarrow Y_d$ is inseparable.
Then for a suitable pencil we have
\begin{equation}
\lim_{d\to\infty}\frac{g_d'}{g_d}\geq\frac{1}{2}.
\label{nonmiserve}
\end{equation}
\label{propo_nonmiserve}
\end{proposition}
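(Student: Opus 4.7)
The plan is that under the hypotheses, since the purely inseparable extension $k(\widetilde{X}_d)/k(\widetilde{Y}_d)$ of degree $2$ sits inside $k(\widetilde{X}_d)/k(\P^1)$, the fibration $\phi_d = \psi_d \circ f_d$ becomes inseparable of inseparable degree $2$, with $\psi_d$ playing the role of the separable part of the canonical separable-then-inseparable factorization. Consequently the closed fibres of $\phi_d$ are non-reduced ``double curves'' $2F$ over the smooth fibres $F_t'$ of $\psi_d$, and a direct arithmetic genus computation then gives $g_d = 2g_F - 1$ while $g_d' = g_F$, so that $g_d'/g_d \to 1/2$ (the bound is asymptotically tight).

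First, since $\psi_d$ is separable, a general fibre $F_t' = \psi_d^{-1}(t)$ is smooth and connected of genus $g_d'$. The closed fibre $F_t = \phi_d^{-1}(t)$ is non-reduced, and writing $F = F_t^{\mathrm{red}}$, for generic $t \in \P^1(k)$ one has $F_t = 2F$ with $F$ smooth connected of some genus $g_F$. Restricting the resolved morphism $\widetilde{f}_d\colon \widetilde{X}_d \to \widetilde{Y}_d$ to $F_t$ and factoring through the reduction produces a finite morphism $F \to F_t'$ of degree $1$: indeed, by the universal factorization of the $k$-linear Frobenius through any purely inseparable degree-$2$ morphism from $F$, the composition $F \to F_t' \to F^{(-1)}$ equals $F_k\colon F \to F^{(-1)}$, forcing the second arrow to have degree $1$; both sides being smooth projective curves, $F \to F_t'$ is an isomorphism and $g_d' = g_F$.

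Second, one computes the arithmetic genus of $F_t = 2F$ on the smooth surface $\widetilde{X}_d$. Using $F^2 = 0$ (since $F$ is supported on a fibre) and $F \cdot K_{\widetilde{X}_d} = 2g_F - 2$ (by adjunction on the smooth curve $F$), one obtains
\[
g_d = p_a(2F) = 1 + \tfrac{1}{2}(2F)(2F + K_{\widetilde{X}_d}) = 1 + F \cdot K_{\widetilde{X}_d} = 2g_F - 1.
\]
Combining with $g_d' = g_F$ gives $g_d'/g_d = g_F/(2g_F - 1) \to 1/2$ as $g_F \to \infty$. Since $g_d \to \infty$ as $d \to \infty$ by Equation \ref{eq_genfibr}, and correspondingly $g_F = (g_d+1)/2 \to \infty$, this yields $\lim_{d \to \infty} g_d'/g_d = 1/2 \geq 1/2$, as required.

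The main obstacle lies in rigorously establishing the structural claims underlying the first step: that for a suitable choice of pencil $D_1, D_2$ in $a_d^*|L|$ the factorization $\phi_d = \psi_d \circ f_d$ really coincides with the canonical separable-then-inseparable factorization of $\phi_d$ (so that $\psi_d$ is separable), that the closed fibre of $\phi_d$ has the concrete form $2F$ with $F$ smooth for generic $t$, and that the induced map $F \to F_t'$ on reduced schemes is an isomorphism. The separability hypothesis on $\alb_X$ is used crucially here to control the pencils in $a_d^*|L|$ and guarantee that these generic structural properties can actually be achieved; formalizing the choice of such a ``suitable'' pencil constitutes the more delicate part of the argument.
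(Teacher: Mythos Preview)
Your argument rests on a false premise. You claim that because $k(\widetilde{X}_d)/k(\widetilde{Y}_d)$ is purely inseparable of degree $2$ and sits inside $k(\widetilde{X}_d)/k(\P^1)$, the fibration $\phi_d$ must itself be inseparable, so that a general closed fibre has the form $2F$. This inference is valid for \emph{algebraic} extensions, but $k(\widetilde{X}_d)/k(\P^1)$ is transcendental, and for such extensions ``separably generated'' does not propagate downward through intermediate fields in this way. Concretely, in characteristic~$2$ take $k(s)\subset k(s,u^2)\subset k(s,u)$: the total extension $k(s,u)/k(s)$ is separably generated (by $u$), yet the top step $k(s,u)/k(s,u^2)$ is purely inseparable. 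Geometrically this is the map $(s,u)\mapsto(s,u^2)$ between two copies of $\P^1\times\P^1$ over the first factor: the map of surfaces is inseparable of degree $2$, while both fibrations over the base $\P^1$ have smooth integral fibres.

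In the paper's setup this is not a hypothetical worry: the pencil $(D_1,D_2)$ is chosen so that the general member is an \emph{integral} non-hyperelliptic curve (Lemma~\ref{lem_surfib} and the discussion preceding it), hence the general closed fibre of $\phi_d$ is reduced and your ``$F_t=2F$'' picture never occurs. What actually happens on a general fibre is that $f_d|_{F_t}\colon F_t\to f_d(F_t)$ is the relative Frobenius of the smooth curve $F_t$; the normalization of the image therefore has the \emph{same} geometric genus $g_d$, and the drop from $g_d$ to $g_d'$ comes entirely from the singularities acquired by the image inside $Y_d$ (equivalently, from passing to the smooth minimal model $\widetilde{Y}_d$). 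Your computation $g_d=2g_F-1$ with $g_d'=g_F$ is simply not the correct relation.

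The paper's proof proceeds along completely different lines: it invokes a formula of Gu expressing $g_d'=g_d-\tfrac14\rk\bigl((\phi_d)_*T(\Omega^1_{\widetilde{X}_d/\P^1})\bigr)$, bounds this rank by $L_{X_d}\cdot c_1(\Omega^1_{X_d/\Alb(X)})$ via a lemma of Gu--Sun--Zhou, and then uses the separability of $\alb_X$ to obtain $(2K_X-c_1(\Omega^1_{X/\Alb(X)}))\cdot L_X\ge K_X\cdot L_X$. Combining these with the explicit formula \eqref{eq_genfibr} for $g_d$ gives the limit $\ge\tfrac12$. The separability of $\alb_X$ enters precisely in controlling $c_1(\Omega^1_{X/\Alb(X)})$; it is not used, as you suggest, to arrange reducedness properties of the pencil.
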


\begin{proof}
By \cite{gu_2016} Proposition 2.2 (cf. also \cite{gusunzhou} proof of Proposition 5.13) we have that $g_d'=g_d-\frac{1}{4}\rk\Bigl((\phi_d)_*\bigl(T(\Omega_{\widetilde{X}_d/\P^1}^1)\bigr)\Bigr)$.
So, using Lemma 5.8 of \cite{gusunzhou} and Equation \ref{eq_genfibr} we get
\begin{equation}
\label{eq_naltra}
\begin{split}
g_d'\geq g_d-\frac{L_{X_d}.c_1\bigl(\Omega_{X_d/\Alb(X)}^1\bigr)}{4}=&\\
=\frac{d^{2q-2}K_X.L_X+d^{2q-4}L_X^2}{2}-\frac{d^{2q-2}c_1(\Omega_{X/\Alb(X)}^1).L_X}{4}&+1=\\
=\frac{d^{2q-2}(2K_X-c_1(\Omega_{X/\Alb(X)}^1)).L_X+2d^{2q-4}L_X^2}{4}&+1.
\end{split}
\end{equation}
By \cite{gusunzhou} Proposition 5.2, we have  
\begin{equation}
\label{eq_gusunzhouc1}
(2K_X-c_1(\Omega_{X/\Alb(X)}^1)).L_X\geq K_X.L_X.
\end{equation}
Hence 
\begin{equation}
\lim_{d\to\infty}\frac{g_d'}{g_d}\geq\lim_{d\to\infty}\frac{2(d^{2q-2}K_X.L_X+2d^{2q-4}L_X^2)+8}{4(d^{2q-2}K_X.L_X+2d^{2q-4}L_X^2)+8}=\frac{1}{2}
\label{eq}
\end{equation}
and the Proposition is proven.
\end{proof}

\begin{proof}[Proof of Theorem \ref{teo_mio1}]
Suppose that the characteristic of the ground field is different from $2$.
Then the Theorem directly follows from Proposition \ref{propo_gusunzhoufactalb}.

Suppose that the characteristic of the ground field is two.
If the Albanese morphism is inseparable, this is exactly Lemma \ref{lemma_albinsfact}.

Now suppose that the Albanese morphism is separable.
Thanks to Proposition \ref{propo_nonmiserve} and part two of Theorem \ref{teo_slopegusunzhou}, we can derive that if all but finitely many $f_d$ are inseparable, then $K_X^2\geq\frac{9}{2}\chi(\O_X)$ which is excluded by the hypothesis.
Hence, also in this case the thesis is a consequence of Proposition \ref{propo_gusunzhoufactalb}.
\end{proof}

Concerning Theorem \ref{teo_mio2}, observe that this is the same as Theorem 5.14 of \cite{gusunzhou} except that one take $\frac{1}{2}$ instead of $\frac{1}{3}$. This improvement can be done because we have extended Theorem 3.1(2) loc. cit. in Theorem \ref{teo_slopegusunzhou}(2), where the generalization is given by the fact that the constant $c$ can take value up to $\frac{1}{2}$ instead of $\frac{1}{3}$.
Here we briefly recall the argument exposed in \cite{gusunzhou} in order to derive Theorem \ref{teo_mio2} from Theorem \ref{teo_slopegusunzhou}(2).

Let $X$ be a minimal surface of general type with maximal Albanese dimension and denote by $\alb_X\colon X\to \Alb(X)$ its Albanese morphism.
Notice that there are finitely many rational maps of degree two $\pi_i\colon X\dashrightarrow Y_i$ with $Y_i$ smooth and minimal such that there exists a morphism $b_i\colon Y_i\to \Alb(X)$ for which $b_i\circ\pi_i=\alb_X$.
Indeed the separable morphisms $\pi_i$ are in natural correspondence with a subset of the group of automorphism of $X$ which is finite, while there exists a unique inseparable morphism $\pi_i$ of degree two when $\alb_X$ is inseparable and the characteristic of the ground field is two and it is uniquely determined by the 1-foliation $\T_{X/\Alb(X)}$ (cf. Lemma \ref{lemma_albinsfact}).

\begin{definition}
Let $\pi_i\colon X\dashrightarrow Y_i$ as above and let $L$ be a very ample line bundle on $\Alb(X)$.
Denote by $L_X=\alb_X^*L$ and $L_{Y_i}=b_i^*L$.
We define 
\begin{equation}
c_i(X,L)=\frac{K_{Y_i}.L_{Y_i}}{K_X.L_X}.
\label{eq_cixl}
\end{equation}
If the characteristic of the ground field is two and $\alb_X$ is separable we define
\begin{equation}
c_0(X,L)=\frac{(2K_X-c_1(\Omega_{X/\Alb(X)}^1)).L_X}{2K_X.L_X}.
\label{eq_c0xl}
\end{equation}

Now take 
\begin{equation}
c(X,L)=\min_{i>0}\{c_i(X,L)\}
\label{eq_cxl}
\end{equation}
 and
\begin{equation}
c(X)=\sup\{c(X,L)\}
\label{eq_cx}
\end{equation}
for all possible very ample line bundles $L$ on $\Alb(X)$.
\label{def_cxl}
\end{definition}

It is clear by definition that $c_i(X,L)=0$ if and only if $Y_i$ is an Abelian surface.

\begin{remark}
Observe that the definition of all $c_i(X,L)$ and, consequently the definition of $c(X,L)$, does not change if we substitute $L$ with a multiple of $L$.
\label{rem_cxl}
\end{remark}

\begin{proof}[Proof of Theorem \ref{teo_mio2}]
So far we have constructed the following commutative diagram
\begin{equation}
\begin{tikzcd}
& X_d\arrow[dashed]{dd}{f_d}\arrow[dashed]{dl}{\phi_d}\arrow{dr}{a_d}& \\
\P^1 & & \Alb(X)\\
& \widetilde{Y}_d \arrow{ul}{\psi_d}\arrow{ur}{b_d}& 
\end{tikzcd}
\label{eq_nonsocomexcl}
\end{equation}
where $a_d\colon X_d\to \Alb(X)$ is the pull-back via the multiplication by an integer $d$ coprime with the characteristic of the ground field of the Albanese morphism $\alb_X\colon X\to \Alb(X)$ (cf. Equation \ref{eq_invcartalb}), $f_d$ is a degree-two rational map and $\widetilde{Y}_d$ can be assumed to be smooth and minimal.
We have defined $g_d$ to be the arithmetic genus of a general fibre of $\phi_d$ and $g'_d$ to be the arithemtic genus of a general fibre of $\psi_d$.
Recall also that this construction depends on the choice of  a pencil in $a_d^*|L|$ where $L$ is a very ample line bundle on $\Alb(X)$.
Arguing in a similar way as for the fibres of $X_d\dashrightarrow\P^1$ (cf. \cite{gusunzhou}  Lemma 4.1 and Corollary 4.2), we can restrict the choice of the pencil so that the general fibre of $\psi_d$ is integral too.

If the characteristic of the ground field is $2$ and $\Alb(X)$ is inseparable or if the the hypotheses of Proposition \ref{propo_gusunzhoufactalb} are satisfied, then the right hand side of diagram \ref{eq_nonsocomexcl} descends to
\begin{equation}
\begin{tikzcd}
X\arrow[dashed]{rr}{\pi_i}\arrow{dr}{\alb_X}& & Y_i\arrow{dl}{b_i}\\
& \Alb(X) &
\label{eq_nonsoxcl}
\end{tikzcd}
\end{equation}
where the horizontal arrow has to be one of the $\pi_i$ defined above.
In this case, a formula similar to the one for $g_d$ (Equation \ref{eq_genfibr}) holds also for $g_d'$, that is
\begin{equation}
g_d'=1+\frac{d^{2q-2}K_{Y_i}.L_{Y_i}+d^{2q-4}L_{Y_i}^2}{2}
\label{eq_genfibr'}
\end{equation}
Combining Equations \ref{eq_genfibr} and \ref{eq_genfibr'}, we easily derive that 
\begin{equation}
\lim_{d\to\infty}\frac{g_d'}{g_d}=c_i(X,L);
\label{eq_ultimaboh}
\end{equation}
in particular, for $d$ sufficiently large, Theorem \ref{teo_slopegusunzhou}(2) applies and, for $d\to\infty$, we get (cf. Equation \ref{eq_maverso})
\begin{equation}
K_X^2\geq \Bigl(4+\min \{c_i(X,L),\frac{1}{2}\}\Bigr)\chi(\O_X).
\label{eq_mahult}
\end{equation}

If the characteristic of the ground field is two and $\alb_X$ is a separable morphism and $f_d$ does not descend to a degree two morphism $X\to Y_i$, combining Equations \ref{eq_genfibr} and \ref{eq_naltra} we get
\begin{equation}
\lim_{d\to\infty}\frac{g_d'}{g_d}\geq c_0(X,L).
\label{eq_anctra}
\end{equation}
Notice that Equation \ref{eq_gusunzhouc1} states that $c_0(X,L)\geq\frac{1}{2}$.
Again, applying Theorem \ref{teo_slopegusunzhou}(2), we get
\begin{equation}
K_X^2\geq \Bigl(4+\min \{c_0(X,L),\frac{1}{2}\}\Bigr)\chi(\O_X)=\frac{9}{2}\chi(\O_X)
\label{eq_mahult'}
\end{equation}
which concludes the proof.
\end{proof}

\section{Severi type inequalities in characteristic different from $2$}
\label{sec_severi}

In this Section we are going to prove Theorem \ref{teo_mio3} so that throughout this section all varieties are considered over an algebraically closed field of characteristic different from $2$.
The proof follows the outline of the work of Lu and Zuo in \cite{lu} and the author in \cite{conti} over the complex numbers.
Because most of the arguments shown there rely on the theory of double covers, everything proceeds smoothly, up to some minor changes, when the characteristic of the ground field is different from two.
In particular in this case we have to pay a particular attention to the fact that in positive characteristic it may happen that for a surface $X$ we have $q(X)\neq q'(X)$.

The starting point of the proof is Theorem \ref{teo_mio1}: as stated there, the condition $K_X^2<\frac{9}{2}\chi(\O_X)$ assures that there exists  an involution $i\colon X\to X$ with respect to which the Albanese morphism $\alb_X$ is stable (or, equivalently, $\alb_X$ is composed with $i$), i.e.
\[
\begin{tikzcd}
X \arrow{rr}{i}\arrow[swap]{dr}{\alb_X}& & X\arrow{dl}{\alb_X} \\
 & \Alb(X) &
\end{tikzcd}
\]
is commutative.

\begin{remark}[cf. \cite{conti} Remark 4.1 for more details]
Notice that the condition "$\alb_X$ is composed with an involution" is necessary. Otherwise the surface $X=A\times B$ with $g(A)=2\leq g(B)$ gives a counter example.

Notice also that, at least for Theorem \ref{teo_mio2},  the condition $K_X^2<\frac{9}{2}\chi(\O_X)$ is also necessary even if we are assuming that the Albanese morphism is composed with an involution.
Otherwise a double cover $X$ of $Y=A\times B$ with $g(A)=g(B)=2$ ramified over a divisor with at most negligible singularities linearly equivalent to the canonical divisor $K_Y$ of $Y$ gives a counterexample.
\end{remark}

\medskip

The quotient surface $Y=X/i$ can be  singular, but its singular points are not so bad: they are $A_1$ singularities and they are in one-to-one correspondence with the isolated fixed points of $i$. Let $Y'$ be the resolution obtained by blowing up the singularities and let $X'$ be the blow-up of $X$ at the isolated fixed points of $i$. Denote by $Y_0$ the minimal model of $Y'$ and by $X_0$ the middle term of the Stein factorization of the morphism from $X'$ to $Y_0$. What we get is the following commutative diagram.
\begin{equation}
\label{eq_ancora}
\begin{tikzcd}
X \arrow{d}{\pi} & X' \arrow{d}{\pi'}\arrow{l}[swap]{f_X}\arrow{r}{g_X} & X_0 \arrow{d}{\pi_0}\\
Y=X/i 					 & Y' \arrow{r}{g_Y} \arrow{l}[swap]{f_Y}								& Y_0.
\end{tikzcd}
\end{equation}
 We know that the double covers $\pi'$ and $\pi_0$ are  given by equations $2L'=\O_{Y'}(R')$ and $2L_0=\O_{Y_0}(R_0)$ respectively where $R'$ and $R_0$ are the branch divisors. Notice that $R_0$ has to be reduced (because $X_0$ is normal), while $R'$ has to be smooth (because $X'$ is smooth). It follows directly from the universal property of the Albanese morphism and the fact that $\alb_X$ factors through $\pi$ that $Y_0$ is a surface of maximal Albanese dimension with $q(Y_0)=q(X)$ which we call $q$ for simplicity.

By the classification of minimal surfaces, we know that $Y_0$ has non-negative Kodaira dimension and maximal Albanese dimension and in particular we have the following possibilities :
\begin{itemize}
	\item if $k(Y_0)=0$, then $Y_0$ is an Abelian surface and $q=2$;
	\item if $k(Y_0)=1$, then $Y_0$ is an  elliptic surface only with smooth (but possibly  multiple) fibres over a curve $C$ with genus $g(C)=q-1$ (cf. Lemma \ref{lemma_ueno});
	\item if $k(Y_0)=2$, then $Y_0$ is a minimal surface of general  type of maximal Albanese dimension with $q\geq 2$.
\end{itemize}

The surface $X_0$ may not be smooth, so we perform the canonical resolution. We get the following diagram
\begin{equation}
\label{eq_char0xtoy}
\begin{tikzcd}
X \arrow{d}{\pi} & X_t \arrow{d}{\pi_t}\arrow{l}[swap]{\phi}\arrow{r}{\phi_0} & X_0 \arrow{d}{\pi_0}\\
Y=X/i & Y_t \arrow{r}{\psi_0} \arrow{l}[swap]{\psi} & Y_0.
\end{tikzcd}
\end{equation}
 We notice that $X$ is nothing but the minimal model of $X_t$: thus, there exists an integer $n$ such that $\phi$ is the composition of $n$ blow-ups. 
 In particular $K_X^2=K_{X_t}^2+n$ and $\chi(\O_X)=\chi(\O_{X_t})$.

\begin{proposition}
\label{propo_k<2}
In the situation above, suppose that $Y_0$ is not a surface of general type.
Then $X$ satisfies
\[
K_X^2\geq 4\chi(\O_X)+4(q-2)
\]
and equality holds if and only if the canonical model of $X$ is isomorphic to a double cover of a product elliptic surface ($q\geq 3$) $Y=C\times E$ where $E$ is an elliptic curve and $C$ is a curve of genus $q-1$, whose branch divisor $R$ has a most negligible singularities and
\begin{equation*}
R\sim_{lin} C_1+C_2+\sum_{i=1}^{2d}E_i
\label{eq_teomioabranch}
\end{equation*}
where  $E_i$ (respectively $C_i$) is a fibre of the first projection (respectively the second projection) of $C\times E$ and $d>7(q-2)$ or the canonical model of $X$ is a double cover of an Abelian surfaces branched over an ample divisor ($q=2$). 
Moreover we have that $\Alb(X)=\Alb(Y)$ and $q(X)=q'(X)$.
\end{proposition}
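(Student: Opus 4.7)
The plan is to extract from the canonical resolution diagram \ref{eq_char0xtoy} an explicit identity for $K_X^2-4\chi(\O_X)$ in terms of the invariants of $Y_0$, $L_0$ and the multiplicities $m_i$ of the canonical resolution, and then to split according to the Kodaira dimension of $Y_0$. Combining \ref{cansq} and \ref{eulchar} with $K_X^2=K_{X_t}^2+n$ and $\chi(\O_X)=\chi(\O_{X_t})$ (where $n$ is the number of $(-1)$-curves contracted by $\phi\colon X_t\to X$) yields, after cancellation,
\begin{equation*}
K_X^2-4\chi(\O_X)=2K_{Y_0}^2+2K_{Y_0}.L_0-8\chi(\O_{Y_0})+2\sum_{i=1}^t(m_i-1)+n,
\end{equation*}
where the last two summands are non-negative and vanish precisely when the branch locus $R_0$ has at worst negligible singularities and $X_0$ is itself the canonical model of $X$ (cf. Remark \ref{rem_cansing}).

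If $Y_0$ is an Abelian surface ($q=2$), then $K_{Y_0}^2=\chi(\O_{Y_0})=0$ and $K_{Y_0}\equiv 0$, so the identity collapses to $K_X^2-4\chi(\O_X)=2\sum(m_i-1)+n\geq 0=4(q-2)$, with equality iff $X_0$ is the canonical model of $X$. Since $K_{X_0}=\pi_0^*L_0$ and $X$ is of general type, $L_0$ is big, and a big effective line bundle on an Abelian surface is ample, so $R=2L_0$ is ample. Remark \ref{alb} then gives $\Alb(X)=\Alb(Y_0)$ and $q(X)=q'(X)=2$.

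If $Y_0$ is properly elliptic with fibration $f\colon Y_0\to C$, then $K_{Y_0}^2=\chi(\O_{Y_0})=0$ by Lemma \ref{lemma_cossec} and $g(C)=q-1$ by Lemma \ref{lemma_ueno}. Using the canonical bundle formula \ref{eq_canellsur} and the inequality \ref{eq_degl0}, I would bound
\begin{equation*}
K_{Y_0}.L_0=L_0.F\cdot\Bigl(-\deg\L+2(q-2)+\sum_i \tfrac{a_i}{n_i}\Bigr)\geq 2(q-2)\,L_0.F,
\end{equation*}
where $F$ is a general fibre. On $F$ the restriction of $\pi_0$ is a double cover of an elliptic curve; were it \'etale, $X$ would admit an elliptic fibration, forcing $K_X^2=0$ by Lemma \ref{lemma_cossec} and contradicting general type. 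Hence $L_0.F\geq 1$, and so $K_X^2-4\chi(\O_X)\geq 4(q-2)$. Equality forces simultaneously $L_0.F=1$, $\deg\L=0$, no multiple fibres in $f$, all $m_i=1$ and $n=0$; in particular $f$ is a smooth elliptic fibration admitting a line bundle of fibre-degree $1$, which is coprime to the characteristic, so Proposition \ref{propo_isoell} forces $Y_0=C\times E$ with $E$ isogenous to $F$.

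It remains, in this equality case, to identify the linear class of $R$ and the residual invariants. Proposition \ref{propo_exactpicce} splits $\Pic(C\times E)$ as $\Pic(C)\oplus\Pic(E)\oplus\Hom_{c_0}(C,E)$, and $L_0.F=1$ forces the $\Pic(E)$-component of $L_0$ to be $\O_E(e)$ for some $e\in E$; a careful analysis via \ref{pics} combined with $2L_0=\O_{Y_0}(R)$ and the torsion-freeness of $\Hom(C,E)$ then shows that the $\Hom_{c_0}(C,E)$-component of $L_0$ is trivial, so $L_0\sim\pi_C^*L_C+\pi_E^*\O_E(e)$ and consequently $R\sim_{\mathrm{lin}} C_1+C_2+\sum_{i=1}^{2d}E_i$ with $d=L_0.C$. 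From the invariant computation of Example \ref{es_1} one reads $\chi(\O_X)=q-2+d$ and $K_X^2=8(q-2)+4d$, so the assumption $K_X^2<\tfrac{9}{2}\chi(\O_X)$ translates precisely into $d>7(q-2)$. Ampleness of $L_0$ (since $d>0$) together with Lemma \ref{lemma_kodvan} gives $h^1(Y_0,L_0^{-1})=0$, hence $(\pi_0)_*\O_{X_0}=\O_{Y_0}\oplus L_0^{-1}$ yields $q'(X)=q(Y_0)=q$, and $\Alb(X)=\Alb(Y_0)=\jac(C)\times E$ is non-simple. The hard part will be the identification of the $\Hom_{c_0}(C,E)$-component of $L_0$, which is not controlled by numerical invariants alone and requires a delicate intersection-theoretic bookkeeping inside the split sequence \ref{pics}.
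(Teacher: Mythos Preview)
Your overall strategy matches the paper's: reduce to the identity
\[
K_X^2-4\chi(\O_X)=2\bigl(K_{Y_0}^2-4\chi(\O_{Y_0})\bigr)+2K_{Y_0}.L_0+2\sum_{i=1}^t(m_i-1)+n,
\]
split according to $k(Y_0)$, use the canonical bundle formula \ref{eq_canellsur} together with $-\deg\L\geq 0$ and $F.L_0\geq 1$ in the elliptic case, and invoke Proposition \ref{propo_isoell} to obtain the product structure when equality holds. That part is fine and essentially identical to the paper's argument.

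The gap is in the final step, where you claim that ``torsion-freeness of $\Hom(C,E)$'' forces the $\Hom_{c_0}(C,E)$-component of $L_0$ to vanish. This does not follow: torsion-freeness only tells you that $2f=0\Rightarrow f=0$, but nothing here forces $2f=0$. In fact the paper does \emph{not} prove that this component vanishes. What it shows (deferring to \cite{conti} Lemmas 5.1 and 5.3) is that $R_0$ is linearly equivalent to $\Gamma_g+\Gamma_{g+e}+\sum_iE_i$ for some possibly \emph{non-constant} $g\in\Hom(C,E)$, and then that the automorphism $(c,e)\mapsto(c,e-g(c))$ of $C\times E$ identifies $X_0$ with a double cover branched over a divisor of the form $C_1+C_2+\sum_iE_i$. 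So the statement ``$R\sim_{\mathrm{lin}}C_1+C_2+\sum E_i$'' holds only after this change of product structure, not for the original $R_0$ on the nose. Your ``delicate intersection-theoretic bookkeeping'' will not produce the vanishing you want; you need the change-of-coordinates argument instead.

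A minor point: in the Abelian case, Remark \ref{alb} gives $\Alb(X)=\Alb(Y_0)$ but not $q'(X)=2$; for the latter you still need $h^1(Y_0,L_0^{-1})=0$, which follows from Mumford's vanishing on Abelian varieties once you know $L_0$ is ample.
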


\begin{proof}
If $Y_0$ is an elliptic surface we have (cf. Equation \ref{eq_canellsur})
\begin{equation}
\begin{split}
K_{Y_0}\sim_{num}(2g(C)-2-&\deg(\L_0))F+\sum_{j=1}^m(a_j-1)F_j=\\
= 2(q-2)F-\deg(\L_0)F+&\sum_{j=1}^m(a_j-1)F_j
\end{split}
\label{eq_candegell234}
\end{equation}
(recall that $\L_0$ is the locally-free part of $R^1(\alpha_0)_*\O_{Y_0}$ where $\alpha_0\colon Y_0\to C_0$ is the elliptic fibration, $F$ is a general fibre and $F_i$ are the multiple fibres of $\alpha_0$  and that $-\deg(\L_0)\geq0$, cf. Equation \ref{eq_degl0}).
Moreover, we have $F.L_0>0$ because $X$ is of general type. 
Indeed, assume by contradiction that $F.L_0\leq 0$.
In particular it follows that $\psi_0^*F.L_t=\psi^*F.(\psi_0^*L_0(-\sum_{i=1}^tm_iE_i))=F.L_0\leq 0$ which is a contradiction because $K_{X_t}=\pi_t^*(K_{Y_t}+L_t)$, $K_{Y_t}$ is,numerically, a multiple of a fibre and $X_t$ is of general type.
When $Y_0$ is Abelian, we know that the canonical bundle is trivial.

Using Equations \ref{cansq} and \ref{eulchar}, we get
\begin{equation}
\label{eq_ell1}
\begin{split}
K_X^2-4\chi(\O_X)=K_{X_t}^2&-4\chi(\O_{X_t})+n=\\
=2(K_{Y_0}^2-4\chi(\O_{Y_0}))+2K_{Y_0}&.L_0+2\sum_{i=1}^{t}(m_i-1)+n=\\
=4(q-2)F.L_0-&2\deg(\L_0)F.L_0+\\
+2\sum_{j=1}^m(a_j-1)F_j.L_0+&2\sum_{i=1}^{t}(m_i-1)+n\geq\\
\geq 4(q-2)+2\sum_{j=1}^m(a_j-1)F_j.L_0&+2\sum_{i=1}^{t}(m_i-1)+n\geq 4(q-2).
\end{split}
\end{equation}

It is then clear that equality holds if and only if $n=0$ (i.e. $X=X_t$), $m_i=1$ for all $i$,  $a_j=1$ for all $j$ and $\deg(\L_0)=0$ (the last two are required only when $Y_0$ is elliptic) and $K_{Y_0}.L_0=4(q-2)$.
The last equality is trivially true over an Abelian surface while over an elliptic surface is equivalent to $F.L_0=1$ which implies that there are no multiple fibres.
From this follows that there are no exceptional fibres (this is also implied by $\deg(\L_0)=0$, cf. Remark \ref{rem_irr}) and, by Proposition \ref{propo_isoell}, that $Y_0$ is a product elliptic surface (and if $q=2$ is a product Abelian surface), i.e. $Y_0=C\times E$ where $E$ is an elliptic curve and $C$ a curve of higher genus.
Observe that by \cite{conti} Lemma 5.3 we know that the branch divisor $R_0$ is linearly equivalent to $\Gamma_{g}+\Gamma_{g+e}+\sum_{i}E_i$ where $g$ is a non-constant function from $C$ to $E$, $e$ a closed point of $E$, $\Gamma(g)$ the graph of $g$  and $E_i$ different fibres of $C\times E\to C$.
Moreover (by ibidem Lemma 5.1) we know that $X_0$ is naturally isomorphic to a double cover of $C\times E$ branched over a divisor linearly equivalent to $C_1+C_2+\sum_{i}E_i$.
Notice that Lemma 5.1 and 5.3 of \cite{conti} are proved over the complex numbers, but actually the proofs work verbatim over any algebraically closed field of characteristic different from $2$.  

The fact that, in case of equality, we have $q'(X)=q'(Y)=q(Y)=q(X)$ follows easily from K\"unneth's formula and the explicit formula of the branch divisor when $Y_0$ is an elliptic surface, while, when $Y_0$ is Abelian it is even more immediate by the fact that $L_0$ has to be ample.
\end{proof}

Now we assume that in diagram \ref{eq_char0xtoy}, the surface $Y_0$ is of general type: in this case we have to prove that a stronger inequality holds for the invariants of $X$.

\begin{lemma}
In the situation above, we have:
\begin{enumerate}
	\item if $K_{Y_0}^2-4\chi(\O_{Y_0})\geq 4(q-2)$ then
	\[K_X^2-4\chi(\O_X)\geq 8(q-2)\geq 4(q-2)\]
	and equality holds if and only if $Y=Y_0$, $q=2$ and $\pi$ is an \'etale double cover;
	\item if $K_{Y_0}^2\geq\frac{9}{2}\chi(\O_{Y_0})$ then 
	\[K_X^2-4\chi(\O_X)>28(q-2).\]
\end{enumerate}
\label{lemma_luzuogen+}
\end{lemma}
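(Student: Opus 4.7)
The plan is to reduce everything to a single clean identity expressing $K_X^2-4\chi(\O_X)$ in terms of invariants of $Y_0$, $L_0$ and the canonical-resolution data, and then to bound the right-hand side. Applying Equations \ref{cansq} and \ref{eulchar} to the double cover $\pi_t\colon X_t\to Y_t$, together with $K_X^2=K_{X_t}^2+n$ and $\chi(\O_X)=\chi(\O_{X_t})$, gives $K_X^2-4\chi(\O_X)=2(K_{Y_t}^2-4\chi(\O_{Y_t}))+2K_{Y_t}.L_t+2\sum_{i=1}^{t}(m_i-1)+n$. Using the standard blow-up formulas $K_{Y_t}^2=K_{Y_0}^2-t$, $\chi(\O_{Y_t})=\chi(\O_{Y_0})$ and $K_{Y_t}.L_t=K_{Y_0}.L_0+\sum m_i$ (obtained by iterating $L_{j+1}=\psi_{j+1}^{*}L_j-m_{j+1}E_{j+1}$), this collapses to the key identity $K_X^2-4\chi(\O_X)=2(K_{Y_0}^2-4\chi(\O_{Y_0}))+2K_{Y_0}.L_0+4\sum_{i=1}^{t}(m_i-1)+n$, which I will call $(\star)$.

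Each of the four summands on the right of $(\star)$ is nonnegative: the first by the Severi inequality applied to $Y_0$, which is minimal of general type with maximal Albanese dimension, so \cite{par_sev}, \cite{yuan} and \cite{gusunzhou} apply; the second because $K_{Y_0}$ is nef ($Y_0$ being minimal of general type) and $2L_0=\O_{Y_0}(R_0)$ is effective, giving $K_{Y_0}.L_0=\frac{1}{2}K_{Y_0}.R_0\geq 0$; the last two trivially. Part (1) is then immediate: the hypothesis $K_{Y_0}^2-4\chi(\O_{Y_0})\geq 4(q-2)$ bounds the first summand from below by $4(q-2)$, yielding $K_X^2-4\chi(\O_X)\geq 8(q-2)$.

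For the equality analysis in Part (1), every summand of $(\star)$ must vanish individually and the first inequality must be tight: $K_{Y_0}.L_0=0$, all $m_i=1$ (vacuously), $n=0$, and $K_{Y_0}^2=4\chi(\O_{Y_0})+4(q-2)$. Since $K_{Y_0}$ is big and nef, the Hodge index theorem applied to $K_{Y_0}.L_0=0$ forces $L_0^2\leq 0$, with equality iff $L_0$ is numerically trivial; combining this with the reduced effectivity of $R_0=2L_0$ (so that a numerically trivial effective class is zero) forces $R_0=0$, so $L_0\in\Pic(Y_0)$ is a $2$-torsion class and $\pi_0$ is étale. Then $t=0$, there are no isolated fixed points of $i$ on $X$, so $Y$ is smooth and $Y=Y'=Y_0$, while $n=0$ gives $X=X_t=X_0$. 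The remaining equality $K_{Y_0}^2=4\chi(\O_{Y_0})+4(q-2)$, combined with the overall hypothesis $K_X^2<\frac{9}{2}\chi(\O_X)$ (which descends via $K_X^2=2K_{Y_0}^2$, $\chi(\O_X)=2\chi(\O_{Y_0})$ to $K_{Y_0}^2<\frac{9}{2}\chi(\O_{Y_0})$), should force $q=2$: for $q\geq 3$ one would have $Y_0$ itself on the lower Severi line, and the explicit structure given by Theorem \ref{teo_mio3} applied to $Y_0$ becomes incompatible with $X\to Y_0$ being étale.

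For Part (2), substituting $K_{Y_0}^2\geq\frac{9}{2}\chi(\O_{Y_0})$ into $(\star)$ yields $K_X^2-4\chi(\O_X)\geq 2(K_{Y_0}^2-4\chi(\O_{Y_0}))\geq\chi(\O_{Y_0})$, and the target $K_X^2-4\chi(\O_X)>28(q-2)$ will follow by a quantitative combination of this bound, the positivity of $K_{Y_0}.L_0$ in the nontrivially branched regime, and a lower bound on $\chi(\O_{Y_0})$ in terms of $q$ drawing on positivity results such as \cite{gu2019slope}. The main obstacle will be this quantitative step in Part (2)—achieving the sharp constant $28$ by combining several genuinely positive contributions rather than just the leading term $2(K_{Y_0}^2-4\chi(\O_{Y_0}))$—and, in Part (1), cleanly excluding the $q\geq 3$ equality cases, since the inequality $(\star)$ in isolation admits equality configurations for every value of $q$.
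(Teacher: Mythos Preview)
Your overall strategy for Part~(1) matches the paper's, but there are three genuine gaps.

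\textbf{A computational slip in $(\star)$.} When you apply Equations~\ref{cansq} and~\ref{eulchar} to the smooth double cover $\pi_t\colon X_t\to Y_t$ (whose branch $R_t$ is smooth), there is no residual $\sum(m_i-1)$ term; that term only appears when the formulas are written relative to $Y_0$ and $L_0$. Your intermediate formula in terms of $Y_t,L_t$ is therefore off, and after your substitution the coefficient of $\sum(m_i-1)$ in $(\star)$ should be~$2$, not~$4$. This does not affect the direction of the inequality but it is an error.

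\textbf{The equality analysis in Part~(1) is incomplete and circular.} From $K_{Y_0}.L_0=0$ and Hodge index you correctly get $L_0^2\le 0$, but you then jump to $R_0=0$ without treating the case $L_0^2<0$: a reduced effective divisor can perfectly well have negative self-intersection (e.g.\ a union of $(-2)$-curves), so your argument does not close. The paper handles this by observing that $K_{Y_0}.R_0=0$ forces every component of $R_0$ to be a $(-2)$-curve, and then checks directly that a nonempty such $R_0$ would make $(K_{Y_t}+\tfrac12 R_t).\psi_0^*R_0<0$, contradicting the nefness of $K_{X_t}=\pi_t^*(K_{Y_t}+L_t)$. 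For $q=2$, your appeal to Theorem~\ref{teo_mio3} is circular: that theorem is proved \emph{using} this lemma. The correct reason is much simpler: ``equality'' in the statement means equality throughout the displayed chain $K_X^2-4\chi(\O_X)\ge 8(q-2)\ge 4(q-2)$, and $8(q-2)=4(q-2)$ already forces $q=2$.

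\textbf{Part~(2) is missing its key idea.} You correctly sense that $2(K_{Y_0}^2-4\chi(\O_{Y_0}))\ge\chi(\O_{Y_0})$ alone is far too weak to reach $28(q-2)$, but you do not supply the missing ingredient. The paper exploits the ambient hypothesis $K_X^2<\tfrac{9}{2}\chi(\O_X)$: expanding $K_{X_t}^2-\tfrac{9}{2}\chi(\O_{X_t})$ via Equations~\ref{cansq} and~\ref{eulchar} yields $0>\tfrac14(7K_{Y_0}.L_0-L_0^2)$, hence $L_0^2>7K_{Y_0}.L_0$. Hodge index then converts this into $K_{Y_0}.L_0>7K_{Y_0}^2$, and feeding this back into $(\star)$ together with Noether's inequality $K_{Y_0}^2\ge 2p_g(Y_0)-4$ and $\chi(\O_{Y_0})>0$ gives the strict bound $K_X^2-4\chi(\O_X)>28(q-2)$. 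Without this use of $K_X^2<\tfrac92\chi(\O_X)$, there is no mechanism to produce the large coefficient~$28$.
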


\begin{proof}
Arguing similarly as in the proof of Proposition \ref{propo_k<2}, if $K_{Y_0}^2- 4\chi(\O_{Y_0})\geq 4(q-2)$ we have
	\begin{equation}
\label{eqgen1}
\begin{split}
K_X^2-4\chi(\O_X)&\geq 2(K_{Y_0}^2-4\chi(\O_{Y_0}))+2K_{Y_0}.L_0+2\sum_{i=1}^{t}(m_i-1)\geq\\
&\geq 8(q-2)\geq 4(q-2).
\end{split}
\end{equation}
Equality would be possible if and only if  $q=2$, $K_{Y_0}^2=4\chi(\O_{Y_0})$, $X=X_t$ is minimal, $m_i=1$ for all $i$ and $K_{Y_0}.R_0=0$.
In particular the last inequality implies that $R_0$ is a, possibly empty, union of $(-2)$-curves (these are the only curves with non-positive intersection with the canonical bundle).
Suppose that $R_0$ is not trivial: then we would have
\begin{equation}
(K_{Y_t}+\frac{1}{2}R_t).\psi_0^*R_0=\Bigl(\psi_0^*(K_{Y_0}+\frac{1}{2}R_0)+\sum_{i=1}^t(1-m_i)E_i\Bigr).\psi_0^*R_0=\frac{1}{2}R_0^2< 0
\label{eq_nonnefcan}
\end{equation}
which would contradict the nefness of the canonical bundle of the minimal surfaces $X=X_t$ (by Equation \ref{eq_candoubcov}, $K_{X_t}=\pi_t^*(K_{Y_t}+\frac{1}{2}R_t)$ and $\pi_t^*\psi_0^*R_0$ is clearly effective).
By this we have proved $X_0=X_t=X$, $Y_0=Y_t=Y$ and $\pi\colon X\to Y$ is an \'etale double cover such that $\alb_X=\alb_Y\circ\pi$ and the first part of the Lemma is proven.

 If $K_{Y_0}^2\geq\frac{9}{2}\chi(\O_{Y_0})$ then, thanks to Equations \ref{cansq} and \ref{eulchar},  we have that 
\begin{equation}
\begin{split}
0>K_X^2-\frac{9}{2}\chi(\O_X)\geq& K_{X_t}^2-\frac{9}{2}\chi(\O_{X_t})=\\
=2\Bigl(K_{Y_0}^2-\frac{9}{2}\chi(\O_{Y_0})\Bigr)+\frac{7}{4}K_{Y_0}L_0-&\frac{1}{4}L_0^2+\frac{1}{4}\sum_{i=1}^t(m_i-1)(m_i+8)\geq\\
\geq \frac{1}{4}\Bigl(7K_{Y_0}.&L_0-L_0^2\Bigr).
\end{split}
\label{eq_>2gentype}
\end{equation}
In particular we have $L_0^2>7K_{Y_0}.L_0\geq 0$ where the last inequality follows from the fact that $2L_0=R_0$ is effective and $Y_0$ is minimal.

By the Hodge index Theorem we have that $(K_{Y_0}.L_0)^2\geq K_{Y_0}^2L_0^2$ which, combined with Equation \ref{eq_>2gentype}, guarantees that
\begin{equation}
L_0^2 (K_{Y_0}.L_0)>7(K_{Y_0}.L_0)^2\geq 7K_{Y_0}^2L_0^2
\label{eq_>2genhodge}
\end{equation}
i.e.
\begin{equation}
K_{Y_0}.L_0>7K_{Y_0}^2.
\label{eq_>2genhodge2}
\end{equation}

Therefore we can rephrase  Equation \ref{eqgen1} as
\begin{equation}
\begin{split}
K_X^2-4\chi(\O_X)\geq 2(K_{Y_0}^2&-4\chi(\O_{Y_0}))+2K_{Y_0}.L_0>\\
>\chi(O_{Y_0})+14K_{Y_0}^2> 28(p_g(Y_0&)-2)\geq 28(q'(Y_0)-2)\geq\\
\geq 28(q(Y_0)-2)&=28(q(X)-2)
\end{split}
\label{eq_y0gentype}
\end{equation}
where we are using Noether's inequality $K_{Y_0}^2\geq 2p_g(Y_0)-4$ (cf. \cite{liedtkec2} Theorem 2.1) and $\chi(\O_{Y_0})>0$ for a minimal surface of general type (cf. \cite{gu2019slope} Corollary 3.4).
\end{proof}

\begin{remark}
\label{rem_8q-2}
Observe that, in a completely similar way as we have done in the first part of Lemma \ref{lemma_luzuogen+}, we prove that 
\begin{equation}
K_X^2-4\chi(\O_X)=8(q-2)
\label{eq_nuova8q-2}
\end{equation}
if and only if $X=X_t=X_0$, $Y=Y_t=Y_0$ and $\pi\colon X\to Y$ is an \'etale cover and the invariants of $Y$ satisfy $K_Y^2-4\chi(\O_Y)=4(q-2)$.
We will need this later when we study surfaces close to the Severi lines.
\end{remark}

\begin{proof}[Proof of Theorem \ref{teo_mio3}]
Thanks to Proposition \ref{propo_k<2} and Lemma \ref{lemma_luzuogen+} we just have to analyse further the case when $Y_0$ is of general type.

If $K_{Y_0}^2\geq\frac{9}{2}\chi(\O_{Y_0})$ then we can conclude by part two of Lemma \ref{lemma_luzuogen+}.
On the other hand, if we assume that $K_{Y_0}<\frac{9}{2}\chi(\O_{Y_0})$, thanks to Theorem \ref{teo_mio1}, we know that there exists an involution on $Y_0$ relative to the Albanese morphism $\Alb_{Y_0}$ of $Y_0$.
Proceeding by induction on the power of $2$ dividing the degree of $\alb_X$, we get a sequence of minimal surfaces $Y_i$ with $i=1,\dots,n$ with rational maps of degree two $\phi_i\colon Y_i\dashrightarrow Y_{i+1}$ such that the Albanese morphism $\alb_X$ of $X$ factors through the $\phi_i$, $Y_i$ is a surface of general type whose invariants satisfy $K_{Y_i}^2<\frac{9}{2}\chi(\O_{Y_i})$ for $i=1,\ldots,n-1$ and $Y_n$ is either a surface of general type with $K_{Y_n}^2\geq\frac{9}{2}\chi(\O_{Y_n})$ or a surface not of general type.
In both cases, through the above discussion, we have that $K_{Y_{n-1}}^2\geq 4\chi(\O_{Y_{n-1}})+4(q-2)$ and proceeding by induction and using Equation \ref{eqgen1} we conclude that $K_X^2\geq 4\chi(\O_X)+4(q-2)$.

The fact that if $Y_0$ is of general type we can not have equality, proceeds as follows.
Indeed, suppose that $K_X^2-4\chi(\O_X)=4(q-2)$, we have that $Y=Y_0$, $\pi\colon X\to Y$ is an \'etale double cover, $K_Y^2=4\chi(\O_Y)$ and $q=2$ thanks to part one of Lemma \ref{lemma_luzuogen+}.
Proceeding by induction on the power of $2$ dividing the degree of $\alb_X$, we obtain, similarly as above, a sequence
\begin{equation}
X=Z_0\xrightarrow{\phi_1=\pi}Y=Z_1\xrightarrow{\phi_2}Z_2\xrightarrow{\phi_3}\ldots\xrightarrow{\phi_{n-1}}Z_{n-1}\xrightarrow{\phi_n}Z_n=\Alb(X)
\label{eq_mavero}
\end{equation}
where $n\geq2$, $Z_i$ is a surface of general type for $i=0\ldots,n-1$, $\phi_i$ is an \'etale double cover for $i=1,\ldots,n-1$, $Z_n$ is the Albanese variety of $X$ and $\phi_n$ is the resolution of singularities of a flat double cover branched over an ample divisor.
By Theorem \ref{teo_funddoub}, we know that $\phi_{n}$ induces an isomorphism of algebraic fundamental groups. 
In particular there exists an \'etale double cover $A\to \Alb(X)$ where $A$ is an Abelian surface such that 
\begin{equation}
\begin{tikzcd}
Z_{n-2}\arrow{r}{\phi_{n-1}}\arrow{d}{}&Z_{n-1}\arrow{d}{\phi_n}\\
A\arrow{r}{}& \Alb(X)
\end{tikzcd}
\label{eq_mahvero}
\end{equation}
is a Cartesian diagram.
But this is a contradiction to the fact that $\phi_n\circ\ldots\circ\phi_1=\alb_X\colon X\to \Alb(X)$.
The proof  is then complete.
\end{proof}

\section{Surfaces close to the Severi lines in characteristic different from $2$}
\label{sec_close}

In this section we prove Theorem \ref{teo_mio4}, i.e. we give a characterization of surfaces which lie close to the Severi lines over an algebraically closed field of characteristic different from $2$ and we will see that the closest satisfy $K_X^2=4\chi(\O_X)+8(q-2)$ (we are always assuming $K_X^2<\frac{9}{2}\chi(\O_X)$).

First of all we prove the following Lemma, which we need to show that in case $K_X^2=4\chi(\O_X)+8(q-2)$ holds we have that the Picard scheme is reduced, i.e. $q'=q$.
\begin{lemma}
Let $\alpha\colon Y\to C$ be a smooth elliptic surface fibration with maximal Albanese dimension and let $X$ be the minimal smooth model of the double cover given by the equation $2L=\O_Y(R)$ where $R$ has at most  negligible singularities, denote by $\pi\colon X\to Y$ the induced morphism.
Suppose moreover that $X$ is a surface of general type, $q(X)=q(Y)$, $L.F=2$ where $F$ is a fibre of $Y\to C$ and $K_X^2<\frac{9}{2}\chi(\O_X)$.
Then we have that $q'(X)=q(X)$
\label{lemma_q'=q}
\end{lemma}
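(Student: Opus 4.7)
The plan is to split
\[
q'(X) \;=\; h^1(Y, \O_Y) + h^1(Y, L^{-1}) \;=\; q'(Y) + h^1(Y, L^{-1}),
\]
which is valid because the characteristic is different from $2$: the canonical resolution $X\to Y$ factors through an intermediate singular double cover whose singularities are rational double points by Lemma \ref{lemma_branchan} (the negligible--singularities assumption on $R$ implies that the $m_i$ in the canonical resolution are all equal to $1$), so the direct image splits as $\pi_*\O_X = \O_Y \oplus L^{-1}$. The lemma thus reduces to two separate vanishings, $q'(Y) = q(Y)$ and $h^1(Y, L^{-1}) = 0$.

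For the first, smoothness of the fibration $\alpha\colon Y \to C$ rules out multiple and therefore exceptional fibres; by Remark \ref{rem_irr} this forces $R^1\alpha_*\O_Y = \O_C$ and hence $q'(Y) = g(C) + 1$, which coincides with $q(Y)$ by Lemma \ref{lemma_ueno}, and with $q(X)$ by hypothesis.

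For the second vanishing, the strategy is to show that $L$ is ample on $Y$ and then to invoke Mukai's Kodaira vanishing for elliptic surfaces (Lemma \ref{lemma_kodvan}). Using Equations \ref{cansq} and \ref{eulchar} with $m_i=1$, together with $K_Y^2 = 0$ and $K_Y\cdot L = 4(q-2)$ (which follow from the numerical description $K_Y \equiv 2(q-2)F$ coming from the vanishing of $\deg \L$), I obtain
\[
K_X^2 = 16(q-2) + 2L^2, \qquad \chi(\O_X) = 2(q-2) + \tfrac{1}{2}L^2.
\]
The hypothesis $K_X^2 < \tfrac{9}{2}\chi(\O_X)$ then forces $L^2 > 28(q-2) \geq 0$ when $q\geq 3$, while for $q = 2$ the positivity of $L^2$ follows from bigness of $K_X = \pi^*(K_Y+L)$. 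Since $L\cdot F = 2 > 0$ by hypothesis, Nakai--Moishezon reduces the ampleness of $L$ to checking $L\cdot D > 0$ for every irreducible horizontal curve $D$. For this step I would combine the nefness of $K_Y + L$ (inherited from $K_X$ being nef on $X$ via the finite morphism $\pi$) with the effective decomposition $2L = R$ into horizontal and vertical parts, treating separately the cases $D \subset R$ and $D \not\subset R$ and appealing to Hodge index together with the absence of rational curves on the maximally Albanese-dimensional surface $Y$ to rule out configurations $D \cap R = \emptyset$.

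The principal obstacle lies precisely in this last point, namely excluding irreducible horizontal curves $D$ with $L \cdot D \leq 0$, and particularly those that are components of $R$, where one must extract enough negativity control from $L^2 > 0$, the reducedness of $R$, and the geometry of $Y$ to force the strict inequality.
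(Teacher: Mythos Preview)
Your overall architecture matches the paper's: reduce to $q'(X)=q'(Y)+h^1(Y,L^{-1})$ via the splitting of $\pi_*\O_X$, dispatch $q'(Y)=q(Y)$ using smoothness of $\alpha$ and Remark~\ref{rem_irr}, and then aim to prove $L$ ample so that Lemma~\ref{lemma_kodvan} kills $h^1(Y,L^{-1})$. Your computation $L^2>28(q-2)$ from $K_X^2<\tfrac{9}{2}\chi(\O_X)$ is also correct.

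The genuine gap is exactly where you place it: the Nakai--Moishezon check on horizontal curves. The toolkit you propose (nefness of $K_Y+L$, the decomposition $2L=R$, Hodge index, absence of rational curves) does not close the argument. For an irreducible horizontal component $D\subset R$ you would need to rule out $D^2+(R-D)\cdot D\leq 0$, and nothing you list controls $D^2$ from below; in positive characteristic one cannot even invoke Riemann--Hurwitz on the normalization $\widetilde{D}\to C$ to force $D^2\geq 0$, since that map may have an inseparable factor. The paper bypasses this entirely with a different idea: since $L\cdot F=2$ is coprime to the characteristic, Proposition~\ref{propo_isoell} yields an \'etale degree-$2$ base change $\widetilde{C}\to C$ after which the fibration becomes the product $\widetilde{Y}=\widetilde{C}\times F$. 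Ampleness of $L$ is equivalent to ampleness of its pullback $\widetilde{L}$, and on the product the numerical class of $\widetilde{L}$ is explicit via Proposition~\ref{propo_exactpicce}: it is $\Gamma_f+\widetilde{C}+lF$ with $l\geq 0$ and $f\colon\widetilde{C}\to F$ some morphism. Nakai--Moishezon is then immediate when $f$ is non-constant, and when $f$ is constant one need only exclude $l=0$, which follows from the same $K_X^2<\tfrac{9}{2}\chi(\O_X)$ computation you already carried out. The missing ingredient is precisely this passage to the product surface.
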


\begin{proof}
By 
Remark \ref{rem_irr}, we know that $q(Y)=q'(Y)=g(C)+1$ and by Proposition \ref{propo_isoell} we know that $Y\to C$ becomes trivial after an \'etale cover $\delta\colon\widetilde{C}\to C$ of degree $2$ (here we are using $L.F=2$ and $\cha(k)\neq 2$).
In particular we have the following Cartesian diagram
\begin{equation}
\begin{tikzcd}
\widetilde{X}\arrow{r}{\beta}\arrow{d}{\widetilde{\pi}}& X\arrow{d}{\pi}\\
\widetilde{Y}=\widetilde{C}\times F\arrow{d}{\widetilde{\alpha}}\arrow{r}{\gamma} & Y\arrow{d}{\alpha}\\
\widetilde{C}\arrow{r}{\delta} & C.
\end{tikzcd}
\label{eq_hofinitoleidee}
\end{equation} 

If we assume that $L$ is ample we conclude by
\begin{equation}
\begin{split}
q'(X)=h^1(X,\O_X)&=h^1(Y,\pi_*\O_X)=\\
=h^1(Y,\O_{Y})+h^1(Y,L^{-1})=&q'(Y)=q(Y)=q(X)
\end{split}
\label{eq_q'=q}
\end{equation}
where $h^1(Y,L^{-1})=0$ follows by Lemma \ref{lemma_kodvan}.

Notice that $L$ is ample if and only if $\widetilde{L}=\gamma^*L$ is, which is the line bundle associated with the double cover $\widetilde{\pi}$.
We know that $\widetilde{L}$ is effective up to a multiple (twice $\widetilde{L}$ is the ramification divisor of $\widetilde{\pi}$) and, by the formula of the Picard group for $\widetilde{C}\times F$ (cf. Proposition \ref{propo_exactpicce}), we know it is numerically equivalent to $\Gamma_f+\widetilde{C}+lF$ where $l\geq0$, $F$ and $\widetilde{C}$ are fibres of $\widetilde{\alpha}$ and $\widetilde{C}\times F\to F$ respectively and $f\colon\widetilde{C}\to F$ is a morphism.
If $f$ is not a constant function, it is clear by the Nakai-Moishezon criterion that $\widetilde{L}$ is an ample line bundle (recall that $\Gamma_f^2=0$ because we can see it as a fibre of a morphism $\widetilde{C}\times E\to E$, cf. \cite{conti} ).
So suppose that $f$ is constant, i.e. $\widetilde{L}$ is numerically equivalent to $2\widetilde{C}+lF$. 
If $l=0$ we would have that $\widetilde{L}^2=0$ and $\widetilde{L}.K_{\widetilde{Y}}>0$.
In particular Equations \ref{cansq} and \ref{eulchar} and the fact that for \'etale covers the Euler characteristic and the self intersection of the canonical divisor are multiplicative tell us that
\begin{equation}
\begin{split}
0>K_X^2-\frac{9}{2}\chi(\O_X)=\frac{1}{d}(K_{\widetilde{X}}^2-\frac{9}{2}\chi(\O_{\widetilde{X}}))=\\
=\frac{2}{d}(K_{\widetilde{Y}}^2-\frac{9}{2}\chi(\O_{\widetilde{Y}}))+\frac{7}{4d}\widetilde{L}.K_{\widetilde{Y}}-\frac{1}{4d}\widetilde{L}^2>0
\end{split}
\label{eq_bohultcha+}
\end{equation}
which is a contradiction.
So $l>0$ and, again by the Nakai-Moishezon criterion, $\widetilde{L}$ is ample also in this case.
\end{proof}

We are now ready to prove Theorem \ref{teo_mio4}.

\begin{proof}[Proof of Theorem \ref{teo_mio4}]
We use the same notation as in Section \ref{sec_severi}.

If $Y_0$ is an Abelian surface the first part of the Theorem is trivial.

If $Y_0$ is a surface of general type the first part follows from Lemma \ref{lemma_luzuogen+} and Theorem \ref{teo_mio3}.

If $Y_0$ is an elliptic surface, by Equation \ref{eq_ell1} we have that 
\begin{equation}
\begin{split}
K_X^2-4\chi(\O_X)=4(q-2)F&.L_0-2\deg(\L_0)F.L_0+\\
+2\sum_{j=1}^m(a_j-1)F_j.L_0+&2\sum_{i=1}^{t}(m_i-1)+n.
\end{split}
\label{eq_ell1b}
\end{equation}
As already observed in Proposition \ref{propo_k<2}, $K_X^2-4\chi(\O_X)=4(q-2)$ holds if and only if $F.L_0=1$, $\deg(\L_0)=0$, $a_j=1$ for all $j$, $m_i=1$ for all $i$ and $n=0$.

If we want to increase slightly $K_X^2-4\chi(\O_X)$, we thus have $5$ possibilities.

First we discuss $n$. We know that if all the $m_i=1$, then all the irreducible components of the exceptional curve in the cover surface are $(-2)$-curves (cf. \cite{barth} table 1 page 109). Moreover these are the only possible rational curves on $X_t$ (ibidem). This means that in this case $n=0$. In particular, if $n>0$, then there exists an $i$ such that $m_i>1$.

Now suppose that there exists an $i$ such that $m_i>1$. By the classification of simple singularities of curves (cf. \cite{barth} II.8) we know that we have two possibilities for $R_0$. If $R_0$ has a singular point $x$ of order greater than or equal to $4$, then $(F.R_0)_x\geq 3$ (it may happen that one of the irreducible components of $R_0$ passing through $x$ is a fibre). Hence $F.R_0\geq 4$ because $R_0=2L_0$. The other possibility is that $R_0$ has a triple point $x$ which is not simple. A necessary condition for a  triple point not to be simple is to have a single tangent. If the tangent of $R_0$ in $x$ is transversal to $F$, then $(F.R_0)_x\geq 3$, conversely if it is tangent to $F$, we have $(F.R_0)_x\geq 4$ (it may happen, as before, that one of the irreducible component is $F$ itself). In both cases we have $F.R_0\geq 4$.

Notice that if $\deg(\L_0)<0$ or there exists an index $j$ such that $a_j>1$ then there exists at least a multiple fibre in the elliptic surface $Y_0$.

Suppose now that $Y_0$ has a multiple fibre $F_j$ with multiplicity $n_j$. In this case we have $F.R_0=n_jF_j.2L_0\geq 2n_j\geq 4$.

To summarize, whatever quantity we increase, we get $F.R_0\geq 4$: that is to say that whenever 
\[K^2_X>4\chi(\O_X)+4(q-2),\]
we get
\[K^2_X\geq 4\chi(\O_X)+8(q-2)\]
and part 1 is proven.

Now we study the case when $q=2$.
If $Y_0$ is a surface of general type and satisfies $K_{Y_0}^2\geq\frac{9}{2}\chi(\O_{Y_0})$ we can easily conclude using to part 2 of Lemma \ref{lemma_luzuogen+}.
Thanks to Theorem \ref{teo_mio3} we have that in all other cases $K_{Y_0}^2-4\chi(\O_{Y_0})\geq 0$ holds (if $Y_0$ is not of general type, we actually have equality) so that we get
\begin{equation}
\begin{split}
K_X^2-4\chi(\O_X)=2(K_{Y_0}^2-4\chi(\O_{Y_0}))+K_{Y_0}.R_0+2\sum_{i=1}^{t}(m_i-1)+n=\\
=2\Bigl(K_{Y_0}^2-4\chi(\O_{Y_0})+K_{Y_0}.L_0+\sum_{i=1}^{t}(m_i-1)\Bigr)+n:
\end{split}
\end{equation}
as above, if $n>0$, there exists an $i$ such that $m_i>1$. 
In particular $K_X^2>4\chi(\O_X)$ implies $K_X^2\geq 4\chi(\O_X)+2$ and part two of the Theorem is proven.

Now suppose that equality holds, i.e. $K_X^2=4\chi(\O_X)+8(q-2)<\frac{9}{2}\chi(\O_X)$ with $q\geq 3$: it is enough to prove that $Y_0$ is an elliptic surface. Indeed, in this case, it is immediate from Equation \ref{eq_ell1b} and the discussion thereafter that the conditions of the Theorem are necessary and sufficient. 

Suppose by contradiction that $Y_0$ is a surface of general type. For the numerical invariants of $Y_0$, thanks to Theorem \ref{teo_mio3} and Lemma \ref{lemma_luzuogen+}, we have two possibilities. First, if $K_{Y_0}^2\geq \frac{9}{2}\chi(\O_{Y_0})$, we know that $K_X^2-4\chi(\O_{X})> 28(q-2)>8(q-2)$ which is a contradiction. The other possible case is if
\begin{equation}
\label{eq_ultimaaggiunta}
\frac{9}{2}\chi(\O_{Y_0})>K_{Y_0}^2\geq 4\chi(\O_{Y_0})+4(q-2)
\end{equation}
and, from Remark \ref{rem_8q-2}, we derive that there $Y_0=Y$, its invariants satisfy
\begin{equation}
\frac{9}{2}\chi(\O_{Y})>K_{Y}^2= 4\chi(\O_{Y})+4(q-2)
\label{eq_ultimaaggiuntabis}
\end{equation} and $\pi$ is an \'etale double cover which is a factor of the Albanese morphism $\Alb_X$ of $X$.
The fact that the invariants of $Y$ satisfy Equation \ref{eq_ultimaaggiuntabis}, implies that (thanks to Theorem \ref{teo_mio3}) the canonical model of $Y$ is a double cover of a product elliptic surface $C\times E$ branched over an ample divisor $R$ and we call $\alpha\colon Y\to C\times E$ the induced generically finite morphism.
By Theorem \ref{teo_funddoub}, we know that $\phi$ induces an isomorphism on the algebraic fundamental groups; in particular there exists an \'etale double cover $\beta\colon \overline{Y}\to C\times E$ and a degree-two morphism $\gamma\colon X\to \overline{Y}$ branched over an ample divisor such that
\begin{equation}
\begin{tikzcd}
X\arrow{r}{\pi}\arrow{d}{\gamma}& Y\arrow{d}{\alpha}\\
\overline{Y}\arrow{r}{\beta} & C\times E
\end{tikzcd}
\label{eq_nuovaagg}
\end{equation}
is a Cartesian diagram.
If we prove that the Albanese morphism $\Alb_{\overline{Y}}$ of $\overline{Y}$ has degree $1$ onto its image we get a contradiction: indeed in this case, because $\pi$ is a factor of the Albanese morphism $\Alb_X$ of $X$ and the morphism $X\to\Alb_{\overline{Y}}$ has degree two onto its image, we obtain that the $\Alb_X$ has degree two onto its image and, in particular, that $\overline{Y}=Y$ which is not possible because, for example, $Y$ is of general type while $\overline{Y}$ is an elliptic surface.
We know that  $\overline{Y}$ as an \'etale double cover of $C\times E$ is uniquely determined by an element $L\in\Pic(C\times E)$ of order $2$. 
By Proposition \ref{propo_exactpicce} it follows at once that $L\in\Pic^0(C\times E)$: indeed there are  no torsion elements  in $\Hom_{c_0}(C,E)$ and the torsion line bundles of a curve have degree zero.
From this we can derive that there exists an \'etale double cover $A\to\jac(C)\times E$ where $A$ is clearly an Abelian variety and a morphism $\overline{Y}\to A$ such that
\begin{equation}
\begin{tikzcd}
\overline{Y}\arrow{r}{\beta}\arrow{d}{}& C\times E\arrow{d}{}\\
A\arrow{r}{} & \jac(C)\times E
\end{tikzcd}
\label{eq_nonsoh}
\end{equation}
is a Cartesian square.
The morphism $\overline{Y}\to A$ is a closed inclusion because $C\times E\to\jac(C)\times E$ is, hence the Albanese morphism $\Alb_{\overline{Y}}$ of $\overline{Y}$ has degree $1$ and the proof is complete.

\end{proof}

\section{Partial results in characteristic $2$}
\label{sec_char2}
In this section we are going to see what can be said about our results when the characteristic of the ground field is $2$.
As we have already mentioned, the results presented on Sections \ref{sec_severi} and \ref{sec_close} are based on the theory of double covers of surfaces and that's why things get wilder when the characteristic of the ground field is exactly $2$.
In Section \ref{sec_pre} we have shown the theory of double covers, with a particular attention to the case of characteristic $2$: even though this case is more complicated we have some similarities.

First of all we noticed that for every double cover $f\colon X\to Y$ ther is naturally associated a line bundle $L$ via the following short exact sequence:
\begin{equation}
0\to \O_Y\to f_*\O_X\to L^{-1}\to 0.
\label{eq:}
\end{equation}

If the characteristic is different from two, we have seen that this exact sequence splits and the divisor associated with $L^2$ is the ramification divisor of $f$: in particular we have that $L$ is an effective divisor up to a multiple.
We have used this effectiveness result for $L$  to show that $K_Y.L\geq 0$ when $Y$ is a minimal non-ruled surface, and this is the real result we would like to extend in characteristic two.

As we have already noticed in Remark \ref{rem_effdouble}, the line bundle $L$ is effective up to a multiple in most cases even when the characteristic of the ground field is $2$.
There is just one single case where nothing assures $L$ to be effective up to a multiple, i.e. for non-splittable inseparable double covers.

Now suppose that $f\colon X\to Y$ is an inseparable double cover such that $f$ is a factor of the Albanese morphism $\alb_X\colon X\to \Alb(X)$. 
For simplicity we also assume that both $X$ and $Y$ are smooth.
As we have seen, it is canonically defined another double cover $g^{(1)}\colon Y^{(1)}\to X$. 

As we have seen in Remark \ref{rem_albinsfact}, the line bundle $M^{-1}$, which is the inverse of the line bundle associated with $g^{(1)}\colon Y^{(1)}\to X$, has many sections.
At first glance, we thought it were possible to prove that $L$ has to be effective up to a multiple using this and the relations of Lemma \ref{lemma_flgm} between $L$, $M$ and the associated foliations on $Y^{(1)}$ and $X$, but in the end we were not able to do so.

Without this, we can only collect here partial results concerning Severi type inequalities when the characteristic of the ground field is $2$.

Let $X$ be a surface of general type with maximal Albanese dimension over a field of characteristic $2$ satisfying $K_X^2<\frac{9}{2}\chi(\O_X)$: by Theorem \ref{teo_mio1} we know that there exists a morphism of degree two $\pi\colon X\to Y$ with $Y$ normal such that
\[
\begin{tikzcd}
X \arrow{rr}{\pi}\arrow[swap]{dr}{\alb_X}& & Y\arrow{dl}{} \\
 & \Alb(X) &
\end{tikzcd}
\]
commutes.

As we have done over  fields of characteristic different from two (cf. Equation \ref{eq_ancora}) we obtain the following diagram
\begin{equation}
\begin{tikzcd}
X\arrow{d}{\pi}& X_t\arrow{l}{\phi}\arrow{d}{\pi_t}\arrow{r}{\phi_0} & X_0\arrow{d}{\pi_0}\\
Y & Y_t\arrow{l}{\psi}\arrow{r}{\psi_0} & Y_0
\end{tikzcd}
\label{eq_diagpos2}
\end{equation}
where $Y_0$ is the minimal smooth model of $Y$ and $\pi_t\colon X_t\to Y_t$ is the canonical resolution of $\pi_0\colon X_0\to Y_0$.

\begin{lemma}
\label{lemma_specialtype+}
Suppose that $Y_0$ has Kodaira dimension strictly smaller than $2$.
Then we have that 
\begin{equation}
K_X^2\geq 4\chi(\O_X)+4(q-2)
\label{eq_+sevegen}
\end{equation}
and equality holds if and only if the canonical model of $X$ is isomorphic to a double cover of a product elliptic surface $C\times E$ ($q\geq 3$) or of an Abelian surface ($q=2$).

Suppose moreover that $\pi_0$ is separable and $q\geq3$: in this case we can further prove that the line bundle $L_0$ associated with $\pi_0$ is effective and is linearly equivalent to 
\begin{equation}
L\sim_{lin}C+\sum_{i=1}^dE_i
\label{eq_lineqchar2}
\end{equation}
where, as usual, $C$ (respectively $E_i$) is a fibre of the first projection (respectively are fibres of  the second projection) of $C\times E$  and $d>7(q-2)$ and $q'(X)=q(X)$, i.e. the Picard scheme of $X$ is reduced.
\end{lemma}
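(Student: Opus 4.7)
The approach mirrors the proof of Proposition \ref{propo_k<2} while replacing the steps that implicitly require characteristic different from two. The key observation is that Equations \ref{cansq} and \ref{eulchar} remain valid for the canonical resolution of any double cover in characteristic two (as recalled in the discussion following diagram \ref{eq_canres}). Thus, regardless of the separability type of $\pi_0$, one has
\begin{equation*}
K_X^2 - 4\chi(\O_X) = 2(K_{Y_0}^2 - 4\chi(\O_{Y_0})) + 2K_{Y_0}.L_0 + 2\sum_{i=1}^t(m_i - 1) + n,
\end{equation*}
where $n$ counts the blow-ups in $\phi \colon X_t \to X$. By the classification of minimal surfaces of maximal Albanese dimension with Kodaira dimension less than two, $Y_0$ is either an Abelian surface (forcing $q = 2$) or, by Lemma \ref{lemma_ueno}, a smooth (possibly with multiple fibres) elliptic surface over a curve of genus $q - 1$. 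The Abelian case is immediate: $K_{Y_0} = 0$ and $\chi(\O_{Y_0}) = 0$, so the formula gives $K_X^2 - 4\chi(\O_X) \geq 0 = 4(q-2)$.

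In the elliptic case, applying the canonical bundle formula (Equation \ref{eq_canellsur}) together with the vanishings $K_{Y_0}^2 = \chi(\O_{Y_0}) = 0$ (Lemma \ref{lemma_cossec}) and $-\deg(\L_0) \geq 0$ (Equation \ref{eq_degl0}) yields
\begin{equation*}
K_X^2 - 4\chi(\O_X) = 4(q-2)F.L_0 - 2\deg(\L_0)F.L_0 + 2\sum_j (a_j-1)F_j.L_0 + 2\sum_i (m_i-1) + n.
\end{equation*}
The bound $F.L_0 \geq 1$ follows exactly as in Proposition \ref{propo_k<2}: if $F.L_0 \leq 0$ then $\psi_0^* F . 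L_t \leq 0$, and since $K_{X_t} = \pi_t^*(K_{Y_t} + L_t)$ (Equation \ref{eq_candoubcov} is characteristic-free) while $K_{Y_t}$ is numerically a sum of a multiple of a fibre of $Y_t \to C$ and exceptional curves, this contradicts $X_t$ being of general type. Crucially this argument does \emph{not} use effectiveness of $L_0$, so it also handles the inseparable non-splittable case.

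For the equality characterisation, $K_X^2 - 4\chi(\O_X) = 4(q-2)$ forces $n = 0$, all $m_i = 1$, $\deg(\L_0) = 0$, no multiple fibres, and $F.L_0 = 1$. Since $F.L_0 = 1$ is coprime with $\cha(k) = 2$, Proposition \ref{propo_isoell} applies: the fibration $Y_0 \to C$ becomes trivial after an \'etale base change, and feeding $L_0$ into Lemma \ref{lemma_trivialell} yields $Y_0 = C \times E$ (with the analogous statement for $q = 2$ giving an Abelian $Y_0$). By Remark \ref{rem_mi2}, the conditions $m_i = 1$ together with the absence of rational curves on $Y_0$ imply that $X_0$ has only rational double points and is the canonical model of the minimal smooth $X = X_t$.

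Under the additional hypothesis that $\pi_0$ is separable and $q \geq 3$, the line bundle $L_0 = \O_{Y_0}(R_1)$ is effective (Remark \ref{rem_splitdoublecover2}) with $R_1.F = 1$; the Picard-group description of $C \times E$ (Proposition \ref{propo_exactpicce}) combined with this intersection constraint pins down $L_0 \sim_{lin} C + \sum_{i=1}^d E_i$, and the bound $d > 7(q-2)$ is a direct rearrangement of $K_X^2 < \frac{9}{2}\chi(\O_X)$. Reducedness $q'(X) = q(X)$ follows from ampleness of $L_0$ on $C \times E$ (immediate from the Nakai--Moishezon criterion applied to the explicit form just obtained): Kodaira vanishing on the elliptic surface (Lemma \ref{lemma_kodvan}) gives $h^1(Y_0, L_0^{-1}) = 0$, and the short exact sequence $0 \to \O_{Y_0} \to \pi_{0*}\O_{X_0} \to L_0^{-1} \to 0$ then yields $q'(X) = h^1(Y_0, \O_{Y_0}) = q'(Y_0) = q(X)$. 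The main obstacle throughout is precisely the one flagged in Remark \ref{rem_effdouble}, namely the potential non-effectiveness of $L_0$ when $\pi_0$ is inseparable and non-splittable; the strategy above circumvents this for the inequality itself by deducing $F.L_0 \geq 1$ from the general-type hypothesis rather than from effectiveness, while the finer structural conclusions about the branch divisor and Picard scheme are imported only in the separable setting where effectiveness of $L_0$ is available.
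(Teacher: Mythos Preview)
Your proof is correct and follows essentially the same route as the paper's: both rely on the fact that Equations \ref{cansq} and \ref{eulchar} hold for the canonical resolution in characteristic two, both observe that the positivity $F.L_0>0$ in the elliptic case comes from the general-type hypothesis rather than from effectiveness of $L_0$, and both conclude via Proposition \ref{propo_isoell} (note that $L_0.F=1$ already forces the group $G$ there to be trivial, so the detour through Lemma \ref{lemma_trivialell} is unnecessary). Two minor remarks: for effectiveness of $L_0$ in the separable case you should cite Definition \ref{def_branch2} (or Remark \ref{rem_effdouble}) rather than Remark \ref{rem_splitdoublecover2}, which is specific to splittable covers; and for $q'(X)=q(X)$ the paper uses K\"unneth directly on $C\times E$ whereas you go via Kodaira vanishing, but both arguments are valid.
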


\begin{proof}
If $Y_0$ is an Abelian surface it is clear that everything works as over a field of characteristic different from two.

If $Y_0$ is an elliptic surface we have seen in the proof of Proposition \ref{propo_k<2}, we have that $F.L_0>0$ and the proof there given only relies on the fact that $X$ is a surface of general type and the canonical bundle of $Y_0$ is numerically equivalent to a rational multiple of $F$, so it works even if we do not know whether $L_0$ is effective up to a multiple or not.
Once this has been noticed, the Lemma is proven by Equation \ref{eq_ell1} and Proposition \ref{propo_isoell}.

If $\pi_0$ is separable, the linear class of $L$ is proved in a similar, and simpler, way as in Lemma 5.3 of \cite{conti}.
The fact that $d>7(q-2)$ is required in order to have $K_X^2<\frac{9}{2}\chi(\O_X)$ (cf. Example \ref{es_1}) which is an assumption for all this Section.
Then K\"unneth formula and the long exact sequence in cohomology associated with the short exact sequence associated with the double covers $\pi_0\colon X_0\to Y_0$ shows that $q'(X)=g(E)+g(C)=q(X)$.
\end{proof}

\begin{definition}
Let $X$ be a surface of general type of maximal Albanese dimension.
Suppose that the Albanese morphism $\alb_X$ of $X$ factorizes as
\begin{equation}
X\xrightarrow{f_1} X_1\xrightarrow{f_2}\ldots \xrightarrow{f_n} X_n \xrightarrow{g} \Alb(X)
\label{eq_star}
\end{equation}
where the minimal models $\widetilde{X}_1, \ldots, \widetilde{X}_{n-1}$ of $X_1, \ldots, X_{n-1}$ are surfaces of general type satisfying $K_{\widetilde{X}_i}<\frac{9}{2}\chi(\O_{\widetilde{X}_i})$, $f_i$ are finite morphism of degree $2$ and either $\widetilde{X}_n$ is not of general type or  $\widetilde{X}_n$ is of general type and $K_{\widetilde{X}_n}\geq\frac{9}{2}\chi(\O_{\widetilde{X}_n})$.
In this situation, we say that $X$ has the property (*) if it satisfies the following condition: the line bundle $\widetilde{L}_i$ associated with the double cover $\widetilde{f}_i\colon\widetilde{X}_{i-1}\to\widetilde{X}_i$ intersects non negatively the canonical bundle of $\widetilde{X}_i$ for every $i$ such that the minimal model of $X_i$ is of general type.
\label{def_star}
\end{definition}

\begin{remark}
Observe that the condition (*) is very technical, hence we give some stronger conditions which assure that it holds. 
If all the $f_i$ defined above are separable or splittable and inseparable (e.g. $\alb_X$ is a separable morphism), we clearly have that $X$ satisfies (*): indeed in this case we have seen that $\widetilde{L}_i$ is effective up to a multiple (cf. Remark \ref{rem_effdouble}).
\label{rem_star}
\end{remark}

\begin{theorem}
Let $X$ be a surface of general type with maximal Albanese dimension satisfying $K_X^2<\frac{9}{2}\chi(\O_X)$ over a field of characteristic $2$ for which condition (*) holds. 
Then we have $K_X^2-4\chi(\O_X)\geq 4(q-2)$ and equality holds if and only if it is a flat double cover of the blow-up of a product elliptic surface $C\times E$ ($q\geq 3$) or of an Abelian surface ($q=2$).
\label{teo_star2}
\end{theorem}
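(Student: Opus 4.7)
The strategy is to mirror the proof of Theorem \ref{teo_mio3} from Section \ref{sec_severi}, with condition (*) substituting at every step for the failure of $L$ to be effective up to a multiple in the inseparable non-splittable case. First I would apply Theorem \ref{teo_mio1} to obtain a degree-two morphism $\pi\colon X\to Y$ through which $\alb_X$ factors, and form the canonical resolution diagram \ref{eq_diagpos2}, letting $L_0$ denote the line bundle associated with $\pi_0\colon X_0\to Y_0$. Since Equations \ref{cansq} and \ref{eulchar} are valid in characteristic $2$, the key identity
\begin{equation*}
K_X^2-4\chi(\O_X)=2\bigl(K_{Y_0}^2-4\chi(\O_{Y_0})\bigr)+2K_{Y_0}.L_0+2\sum_{i=1}^{t}(m_i-1)+n
\end{equation*}
still holds, and by Definition \ref{def_star} condition (*) forces $K_{Y_0}.L_0\geq 0$.

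Next I would split according to $\kappa(Y_0)$. If $\kappa(Y_0)<2$, Lemma \ref{lemma_specialtype+} applies directly and gives both the bound and the candidate for equality. If $Y_0$ is of general type, I would adapt Lemma \ref{lemma_luzuogen+}: the non-negativity $K_{Y_0}.L_0\geq 0$ coming from (*) replaces the effectivity step in Equation \ref{eqgen1}, and combined with Noether's inequality (valid in characteristic $2$ by \cite{liedtkec2} Theorem 2.1) and $\chi(\O_{Y_0})>0$ (from \cite{gu2019slope} Corollary 3.4), the argument of Equation \ref{eq_y0gentype} yields $K_X^2-4\chi(\O_X)>28(q-2)$ whenever $K_{Y_0}^2\geq\frac{9}{2}\chi(\O_{Y_0})$. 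When instead $K_{Y_0}^2<\frac{9}{2}\chi(\O_{Y_0})$, I would invoke Theorem \ref{teo_mio1} on $Y_0$ to get a further degree-two factor of $\alb_{Y_0}$, which is by construction a sub-tail of the factorization of $\alb_X$, so condition (*) persists and I can iterate. Stringing the estimates along the tower produces $K_X^2-4\chi(\O_X)\geq 4(q-2)$.

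For the equality case I would argue, as in Theorem \ref{teo_mio3}, that any contribution from a general-type $Y_0$ is strict: the iterated bound together with Remark \ref{rem_8q-2} would force each intermediate double cover to be \'etale, and by Theorem \ref{teo_funddoub} applied to the terminal step the algebraic fundamental group of $X$ would factor through an Abelian variety, contradicting the fact that one of the $Y_i$ is of general type. Hence $\kappa(Y_0)<2$ and the equality characterization of Lemma \ref{lemma_specialtype+} applies: $Y_0$ is a product elliptic surface $C\times E$ for $q\geq 3$ or an Abelian surface for $q=2$. In the equality case $m_i=1$ for all $i$ and $n$ simply records the blow-ups from $X_t$ up to $X$, so $Y_t$ is a blow-up of $Y_0$ and $X$ is a flat degree-two cover of $Y_t$ as claimed.

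The main obstacle is the inductive step on a general-type $Y_0$: verifying both that condition (*) is preserved along the tower (this is immediate from the definition, since the tower for $Y_0$ is a tail of the tower for $X$) and that the equality analysis genuinely excludes general-type factors. The latter is the most delicate point, because in characteristic $2$ the calculation analogous to \ref{eq_nonnefcan} must be performed separately for separable and splittable inseparable covers (where $R_0=L_0$ and $R_0=2L_0$ respectively, giving effectivity and hence the usual negativity argument) and for non-splittable inseparable covers (where $K_{Y_0}.L_0\geq 0$ by (*) replaces effectivity). In all three subcases one recovers the contradiction with the nefness of $K_{X_t}$ that forces $\pi_0$ to be \'etale, completing the argument.
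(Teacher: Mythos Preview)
Your proposal is correct and takes essentially the same approach as the paper: the paper's own proof is a single sentence stating that condition (*) is precisely what is needed to make the characteristic~$\neq 2$ argument of Section~\ref{sec_severi} go through verbatim, and your plan is a faithful unpacking of that sentence. Your detailed breakdown into the cases $\kappa(Y_0)<2$ (via Lemma~\ref{lemma_specialtype+}) and $Y_0$ of general type (via the adaptation of Lemma~\ref{lemma_luzuogen+} and induction along the tower), together with the observation that (*) is inherited by tails of the factorization, is exactly the intended expansion.
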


\begin{proof}
The condition (*) is exactly the one that assures that the same proof over a field of characteristic different from two works also in this setting.
\end{proof}

\bibliographystyle{plain} 
\bibliography{bibliografia}

\end{document}